\newtheorem{prop}{Theorem}[section]
\newtheorem{lemma}[prop]{Lemma}
\newtheorem{corollary}[prop]{Corollary}
\newtheorem{definition}[prop]{Definition}
\newcommand{\clc}{\cdot\ldots\cdot}
\newcommand{\olo}{\otimes\ldots\otimes}
\newcommand{\plp}{+ \ldots +}
\newcommand{\wlw}{\wedge\ldots\wedge}
\newcommand{\glg}{\hat\otimes \ldots \hat\otimes}
\newcommand{\nn}{\mathbb{N}}
\newcommand{\zz}{\mathbb{Z}}
\newcommand{\QQ}{\mathbb{Q}}
\newcommand{\rr}{\mathbb{R}}
\newcommand{\cc}{\mathbb{C}}
\newcommand{\C}[1]{\mathcal{#1}}
\newcommand{\T}[1]{\textrm{#1}}
\newcommand{\E}[1]{\emph{#1}}
\newcommand{\F}[1]{\mathbf{#1}}
\newcommand{\fork}[2]{\left\{ \begin{array}{#1} #2 \end{array} \right.} 
\newcommand{\sep}[1]{& \T{#1 } &}
\newcommand{\arr}[2]{\begin{array}{#1} #2 \end{array}}
\newcommand{\mat}[2]{\left(\begin{array}{#1} #2 \end{array} \right)}
\newcommand{\q}{\qquad}
\newcommand{\qq}{\qquad \qquad}
\newcommand{\qqq}{\qquad \qquad \qquad}
\newcommand{\qqqq}{\qquad \qquad \qquad \qquad}
\newcommand{\diff}[2]{\frac{\partial{#1}}{\partial{#2}}}
\newcommand{\inner}[1]{\langle #1 \rangle}
\newcommand{\grad}{\hat\otimes}
\newcommand{\epsi}{\varepsilon}
\newcommand{\ovar}{\otimes_\varphi}
\newcommand{\govar}{\hat\otimes_\varphi}
\newcommand{\ccyc}[2]{C^{\lambda}_{#2}(#1)}
\newcommand{\ccocyc}[2]{C^{#2}_{\lambda}(#1)} 
\newcommand{\hcyc}[2]{HC_{#2}(#1)}
\newcommand{\clie}[2]{\Lambda_{#2}{#1}}
\newcommand{\hlie}[2]{H^{\T{Lie}}_{#2}(#1)}
\newcommand{\relkt}[2]{K^{\T{rel}}_{#2}(#1)}
\begin{document}
 \title{A calculation of the multiplicative character} 

\author{J. Kaad}
\thanks{email:
    \texttt{kaad@math.ku.dk}}
\maketitle
\vspace{-10pt}
\centerline{ Department of Mathematical Sciences, University of Copenhagen}
\centerline{ Universitetsparken 5, DK-2100 Copenhagen, Denmark}

\bigskip
\vspace{30pt} 

\centerline{\textbf{Abstract}} 
We give a formula, in terms of products of commutators, for the application of
the odd multiplicative character to higher Loday symbols. On our way we
construct a product on the relative $K$-groups and investigate the
multiplicative properties of the relative Chern character. 

\newpage
\tableofcontents
\newpage
\section{Introduction}
To each finitely summable Fredholm module $(F,H)$ over a $\cc$-algebra $A$,
A. Connes and M. Karoubi associate a multiplicative character on algebraic
$K$-theory 
\[
M_F : K_n(A) \to \cc / (2 \pi i)^{\lceil \frac{n}{2} \rceil } \zz
\]
The construction uses the relative $K$-groups of a unital Banach algebra and
the relative Chern character with values in continuous cyclic homology. It can
be understood as a pairing between the abelian group generated by finitely
summable Fredholm modules and algebraic $K$-theory, \cite{ConKar}. 

In the case where $n=1$ the multiplicative character has a direct
interpretation as a Fredholm determinant, see \cite{ConKar}. Likewise, in the
case where $n=2$ the multiplicative character coincides with the determinant
invariant as defined in \cite{LBrown, LBrown2}. See also \cite{Rosenberg}. In
particular, when the $\cc$-algebra $A$ is commutative we have the explicit
formula  
\begin{equation}\label{eq:twohh} 
M_F([e^a] * [e^b]) = -\T{Tr}[PaP,PbP] \in \cc/(2\pi i) \zz
\end{equation}
for the application of the multiplicative character to the Loday product
$[e^a]*[e^b] \in K_2(A)$. Furthermore this implies the independence of the
character under trace class perturbations, \cite{Kaad}. Note that a
description of the multiplicative character in terms of a (different) central
extension has also been obtained in \cite{ConKar}. We could thus try to think
of the multiplicative character as an extension of the determinant invariant
to higher $K$-theory. The aim of the present paper is then to find an analogue
of the formula \eqref{eq:twohh} in higher dimensions. This pursuit could be
justified by the large amount of research which focus on the quantity
$\T{Tr}[PaP,PbP] \in \cc$, see \cite{BergShaw, BergShaw2, CarPin3,
  CarPin4,HH}, among others. We would also like to mention the use of the
determinant invariant in relation with the Szegö limit theorem, \cite{CarPin,
  CarPin2}. 

Let us fix an odd $2p$-summable Fredholm module $(F,H)$ over a
\emph{commutative} Banach algebra. The interior Loday product makes the direct
sum of algebraic $K$-groups $\oplus_{n=1}^\infty K_n(A)$ into a graded
commutative ring, see \cite{Loday}. We can thus consider the application of
the multiplicative character to the Loday product $[e^{a_0}]* \ldots *
[e^{a_{2p-1}}] \in K_{2p}(A)$. Here $a_0,\ldots,a_{2p-1} \in M_\infty(A)$. The
main result of the present paper is then the concrete formula 
\begin{equation}\label{eq:mulexp}
\begin{split}
& M_F([e^{a_0}]* \ldots * [e^{a_{2p-1}}]) \\
& \q = (-1)^p c_p \sum_{s \in SE_{2p-1}}\T{sgn}(s) 
\T{Tr}([P\T{TR}(a_0)P, P\T{TR}(a_{s(1)})P] \clc \\
& \qqqq \q [P\T{TR}(a_{s(2p-2)})P, P\T{TR}(a_{s(2p-1)})P]) \in \cc/(2\pi i)^p \zz 
\end{split}
\end{equation}
Here $c_p \in \QQ$ is a constant and $SE_{2p-1} \subseteq \Sigma_{2p-1}$ is
the subset of permutations satisfying $s(2i) < s(2i+1)$. The operator $P =
(F+1)/2$ is the projection associated with the Fredholm module $(F,H)$. This
shows that the multiplicative character is calculizable on the subgroup of
$K_{2p}(A)$ generated by Loday products of elements in the connected component
of the identity. Note that the commutativity assumption serves to ensure the
existence of the \emph{interior} Loday product which is needed for the
calculation to make sense. 

Now, for each $2p$-tuple $(a_0,\ldots,a_{2p-1}) \in M_\infty(A)$ we define the
complex number 
\[
\begin{split}
\inner{a_0,\ldots,a_{2p-1}} 
& = (-1)^p c_p \sum_{s \in SE_{2p-1}}\T{sgn}(s) 
\T{Tr}([P\T{TR}(a_0)P, P\T{TR}(a_{s(1)})P] \clc \\ 
& \qqq \q [P\T{TR}(a_{s(2p-2)})P, P\T{TR}(a_{s(2p-1)})P]) \in \cc
\end{split}
\]
Let us then reflect a bit on what we have obtained. First of all, choosing a
different "logarithm" for $e^{a_i} \in GL_0(A)$, that is some $b_i\in
M_\infty(A)$ with $e^{b_i}=e^{a_i}$, we get that the difference 
\begin{equation}\label{eq:diffformu}
\inner{a_0,\ldots,a_i,\ldots,a_{2p-1}} -
\inner{a_0,\ldots,b_i,\ldots,a_{2p-1}} \in (2\pi i)^p\zz
\end{equation}
is in the additive group $(2\pi i)^p \zz$. That is, the quantity
\eqref{eq:diffformu} is essentially the index of some Fredholm operator. This
is an immediate Corollary of the formula \eqref{eq:mulexp}. Furthermore we get
a couple of desirable properties straight from the corresponding properties of
the Loday product and the algebraic $K$-groups, \cite{Loday}. For example, the
map 
\[
\arr{ccc}{
GL(A)^{2p} \to \cc/(2 \pi i)^p \zz 
& \q & 
(g_0,\ldots,g_{2p-1}) \mapsto M_F([g_0]* \ldots * [g_{2p-1}])
}
\] 
is multilinear and it sends each tuple with an elementary entrance, $g_i \in
E(A)$, to zero. 

The similiarity of the trace formula in \eqref{eq:mulexp} with the expression
in the bivariant case, makes us expect the following generalizations : First
of all the quantity $\inner{a_0,\ldots,a_{2p-1}} \in \cc$ should be invariant
under perturbations of the operators $Pa_i P$ by elements in the Schatten
ideal $\C L^p(H)$. Furthermore, under suitable conditions, we expect our form
to be expressible by means of an integral over the joint essential spectrum of
the operators in question. 

Finally, we would like to explain briefly how the main result is obtained. In
order to calculate the multiplicative character of some element $[x] \in
K_{2p}(A)$ the first obstacle is to construct a lift in relative $K$-theory, 
\[
\arr{ccc}{
[\gamma] \in K_{2p}^{\T{rel}}(A) & \q & \theta[\gamma] = [x]
}
\]
In the special case where the element $[x] \in K_{2p}(A)$ is given by a
product of contractible invertible operators this is accomplished by the
construction of an explicit product on the relative $K$-groups. The product
makes the direct sum of relative $K$-groups $\oplus_{n=1}^\infty
K_n^{\T{rel}}(A)$ into a graded commutative ring and the map $\theta :
\oplus_{n=1}^\infty K_n^{\T{rel}}(A) \to \oplus_{n=1}^\infty K_n(A)$ becomes a
homomorphism of graded rings (recall that $A$ is assumed to be
commutative). The question of finding the lift $[\gamma] \in
K_{2p}^{\T{rel}}(A)$ then reduces to lifting each of the elements $[g_i] \in
K_1(A)$. This is possible by the contractibility assumption. The construction
of the product is carried out in Section \ref{extrel}. On our way we also
express the second relative $K$-group as the second homology group of a
certain simplicial set. 

Having found the lift $[\gamma] \in K_{2p}^{\T{rel}}(A)$ the next problem is
to calculate the relative Chern character of the lift 
\[
\arr{ccc}{
\T{ch}^{\T{rel}} : K_{2p}^{\T{rel}}(A) \to HC_{2p-1}(A) & \q &
\T{ch}^{\T{rel}}[\gamma] = ? 
}
\]
Following the same vein of ideas we show in Section \ref{mulrel} that the
relative Chern character is a homomorphism of graded rings. This should be
understood in the following sense : The relative Chern character has degree
minus one, so the corresponding product in continuous cyclic homology has
degree plus one, 
\[
\arr{ccc}{
* : HC_{n-1}(A) \otimes_\cc HC_{m-1}(A) \to HC_{n+m-1}(A) 
& \q & x * y = x \times (sN)(y) 
}
\]
See also \cite{Loday2}. The calculation in question thus reduces to the case
of $\T{ch}^{\T{rel}} : K_1^{\T{rel}}(A) \to HC_0(A)$. The elements in
$K_1^{\T{rel}}(A)$ are represented by smooth maps $\sigma : [0,1] \to GL(A)$
mapping $0$ to the identity $1 \in GL(A)$ and the relative Chern character
essentially determines the corresponding logarithm of the endpoint
$\sigma(1) \in GL_0(A)$. 

The desired formula \eqref{eq:mulexp} can now be obtained from combinatorial
considerations on the index cocycle associated with Fredholm modules over
commutative algebras. This is carried out in Section \ref{mulcombcalc} where
the main Theorem is presented. 

We begin by giving an account of the various product structures which will be
used throughout the paper. 

{\bf Acknowledgements:} I would like to thank Ryszard Nest for his continuous
support and many helpful comments. I would also like to thank Jerome Kaminker
for the nice talk we had at the U.C. Davis on the subject of the
paper. Finally I am very grateful to Max Karoubi for giving me some valuable
indications related to his fruitful geometric viewpoint.

\section{A preliminary on various product structures in homology} 

%%%%%%%%%%%%%%%%%%%%%%%%%%%%%%%%%%%%%%%%%%8<%%%%%%%%%%%%%%%%%%%%%%%%%%%%%%%%%%%%%%

%%%%%%%%%%%%%%%%%%%%%%%%%%%%%%%%%%%%%%%%%%8<%%%%%%%%%%%%%%%%%%%%%%%%%%%%%%%%%%%%%%

\subsection{The exterior shuffle product}\label{extshuff}
Let $A$ and $B$ be unital Banach algebras. We let $A \grad B$ denote the
projective tensor product of $A$ and $B$ in the sense of Grothendieck,
\cite{Groth}. The definition of the simplicial sets $R_p(A)$ can be found in
Section \ref{extrel}. 

For each $p,q\in \nn$ we fix an isomorphism $\varphi : A^p \otimes_\zz B^q
\to (A \otimes_\zz B)^{pq}$ of $(A \otimes_\zz B)$-bimodules. We then have the
associated group homomorphisms  
\[
\begin{split}
& \ovar : GL_p(A) \times GL_q(B) \to GL_{pq}(A \otimes_\zz B) 
\qqq \T{and} \\  
& \govar = \iota \circ \ovar : GL_p(A) \times GL_q(B) \to GL_{pq}(A \grad B) 
\end{split}
\]
Here $\iota : GL_{pq}(A \otimes_\zz B) \to GL_{pq}(A \grad B)$ is induced by the
"identity" homomorphism $\iota : A \otimes_\zz B \to A \grad B$.

A pointwise version of the completed tensor product yields a map of simplicial
sets 
\[
\arr{ccc}{
\govar : R_p(A) \times R_q(B) \to R_{pq}(A \grad B) 
& \q & 
(\sigma,\tau) \mapsto (t \mapsto \sigma(t) \govar \tau(t) )
}
\]
Composition with the shuffle map \cite{May} 
\[
\T{sh} : C_*(R_p(A)) \otimes C_*(R_q(B)) \to C_*(R_p(A) \times R_q(B))
\]
therefore equips us with a chain map 
\[
\times_\varphi = \govar \circ \T{sh} : 
C_*(R_p(A)) \otimes C_*(R_q(B)) \to C_*(R_{pq}(A \grad B)) 
\]
We will refer to the sum of smooth maps 
\[
\sigma \times_\varphi \tau 
= \sum_{(\mu,\nu) \in \Sigma_{n, m}} \T{sgn}(\mu,\nu) 
s_{\nu(m-1)}\ldots s_{\nu(0)}(\sigma) 
\govar s_{\mu(n-1)}\ldots s_{\mu(0)}(\tau)
\]
as the \emph{exterior shuffle product} of $\sigma \in R_p(A)_n$ and $\tau \in
R_q(B)_m$. Here $\Sigma_{(n, m)} \subseteq \Sigma_{n+m}$ denotes the set of
$(n,m)$-shuffles. Note that it follows by Lemma \ref{conjid} that the induced
map on homology 
\[
\times : H_n(R_p(A)) \otimes H_m(R_q(B)) \to H_{n+m}(R_{pq}(A \grad B))
\]
is independent of the choice of isomorphism $\varphi : A^p \otimes_\zz B^q \to
(A \otimes_\zz B)^{pq}$.  

%%%%%%%%%%%%%%%%%%%%%%%%%%%%%%%%%%%%%%%%%%8<%%%%%%%%%%%%%%%%%%%%%%%%%%%%%%%%%%%%%%

%%%%%%%%%%%%%%%%%%%%%%%%%%%%%%%%%%%%%%%%%%8<%%%%%%%%%%%%%%%%%%%%%%%%%%%%%%%%%%%%%%

\subsection{The exterior wedge product in Lie algebra homology}\label{extlie} 

%%%%%%%%%%%%%%%%%%%%%%%%%%%%%%%%%%%%%%%%%%8<%%%%%%%%%%%%%%%%%%%%%%%%%%%%%%%%%%%%%%

%%%%%%%%%%%%%%%%%%%%%%%%%%%%%%%%%%%%%%%%%%8<%%%%%%%%%%%%%%%%%%%%%%%%%%%%%%%%%%%%%%

Let $A$ and $B$ be unital Banach algebras. For each $n \in \nn$ we let
$\clie{A}{n}$ denote the kernel of the map 
\[
\arr{ccc}{
S : \underbrace{A \glg A}_n \to \underbrace{A \glg A}_n & \q & 
S(a_1 \olo a_n) 
= \sum_{\sigma \in \Sigma_n}\T{sgn}(\sigma) a_{\sigma^{-1}(1)} \olo
a_{\sigma^{-1}(n)}  
}
\]
Remark that the Banach space $\clie{A}{n}$ identifies with the quotient of
$A^{\grad n}$ by the usual action of the symmetric group. 

By the \emph{continuous} Lie algebra homology of the (unital) Banach algebra
$A$ we will then understand the homology of the chain complex
$(\clie{A}{*},\delta)$. Here $\delta : \clie{A}{n} \to \clie{A}{n-1}$ is the
Chevalley-Eilenberg boundary map.  

We let
\[
\arr{ccc}{
\cdot \otimes 1_B : \clie{A}{*} \to \clie{(A \grad B)}{*}
\sep{and}
1_A \otimes \cdot : \clie{B}{*}  \to \clie{(A \grad B)}{*}
}
\]
denote the chain maps obtained by functoriality from the continuous algebra
homomorphisms 
\[
\arr{ccc}{
x \mapsto  x \otimes 1_B
\sep{and}
y \mapsto 1_A \otimes y  
}
\]

We then have a chain map 
\[
\wedge^E : \clie{A}{*} \otimes \clie{B}{*} \to \clie{(A \grad B)}{*}
\]
defined by 
\[
\arr{ccc}{
x \otimes y \mapsto (x \otimes 1_B) \wedge (1_A \otimes y)
& \q & x \in \clie{A}{n} \, , \, y \in \clie{B}{m}
}
\]
For each $x \in \clie{A}{n}$ and each $y \in \clie{B}{m}$ we will refer to the
element 
\[
x \wedge^E y := (x \otimes 1_B) \wedge (1_A \otimes y)
\] 
as the \emph{exterior wedge product} of $x$ and $y$. We let 
\[
\wedge^E : \hlie{A}{n} \otimes \hlie{B}{m} \to \hlie{A \grad B}{n+m}
\]
denote the induced map on continuous Lie algebra homology. The exterior wedge
product, thus defined, is seen to be associative and graded commutative on the
level of complexes. 

%%%%%%%%%%%%%%%%%%%%%%%%%%%%%%%%%%%%%%%%%%8<%%%%%%%%%%%%%%%%%%%%%%%%%%%%%%%%%%%%%%

%%%%%%%%%%%%%%%%%%%%%%%%%%%%%%%%%%%%%%%%%%8<%%%%%%%%%%%%%%%%%%%%%%%%%%%%%%%%%%%%%%

\subsection{The exterior product of degree one in cyclic
  homology}\label{extcyc} 

%%%%%%%%%%%%%%%%%%%%%%%%%%%%%%%%%%%%%%%%%%8<%%%%%%%%%%%%%%%%%%%%%%%%%%%%%%%%%%%%%%

%%%%%%%%%%%%%%%%%%%%%%%%%%%%%%%%%%%%%%%%%%8<%%%%%%%%%%%%%%%%%%%%%%%%%%%%%%%%%%%%%%

Let $A$ and $B$ be unital Banach algebras. We let 
\[
\times : \big( C(A) \otimes C(B)\big)_* \to C_*(A \grad B)
\]
denote the exterior shuffle product on the continuous Hochschild
complex, \cite[Section $4.2$]{Loday2}.  Furthermore, we let $(\ccyc{A}{*},b)$
denote the \emph{continuous} cyclic complex. Thus in each degree $n \in
\nn\cup \{0\}$ we have a Banach space $\ccyc{A}{n}$, \cite{ConKar,
  Karoubi}. Remark that the image $\T{Im}(1 -t) \subseteq A \grad A^{\grad n}$
is closed since it coincides with the kernel of the norm operator $N : A \grad
A^{\grad n} \to A \grad A^{\grad n}$. 

By the exterior product of degree one in continuous cyclic homology we will
understand the map 
\[
* : \ccyc{A}{n} \otimes \ccyc{B}{m} \to \ccyc{A \grad B}{n+m+1}
\]
defined by 
\[
\arr{ccc}{
x*y = x \times (sNy) & \q & x \in \ccyc{A}{n} \, , \, y \in \ccyc{B}{m} 
}
\]
Here $N : C_m(B) \to C_m(B)$ is the norm operator $N = 1+t \plp t^m$ and $s :
C_m(B) \to C_{m+1}(B)$ is the extra degeneracy. 

We will need to show that the product is well defined. For this, consider the
map  
\[
\arr{ccc}{
E : C_n(A) \to \clie{M_{n+1}(A)}{n+1} & \q & 
(a_0,\ldots,a_n) \mapsto E_{12}(a_0) \wlw E_{(n+1)1}(a_n)
}
\]
where $E_{ij}(a)$ denotes the elementary matrix with $a \in A$ in position
$(i,j)$ and zeros elsewhere, \cite{LodQuill, Tsygan}.

From Theorem \ref{antprod} and Theorem \ref{traceprod} we then get the
equality 
\[
\arr{ccc}{
x * y = (\T{TR} \circ \varepsilon)(E(x) \wedge^E E(y)) 
& \q & x \in C_n(A) \, , \, y \in C_m(B)
}
\]
where $\varepsilon : \clie{M_k(A)}{*} \to \ccyc{M_k(A)}{*-1}$ and $\T{TR} :
\ccyc{M_k(A)}{*} \to \ccyc{A}{*}$ denote the antisymmetrization map and the
generalized trace on continuous cyclic homology respectively. It follows that
the product is well defined and that it is associative and graded commutative
on the level of complexes. 

Lastly, the Hochschild boundary is a (shifted) graded derivation with respect
to the product  
\[
b(x*y)= (bx)*y + (-1)^{\T{deg}(x)+1} x * (by)
\]
It follows that our multiplication descends to an exterior product of degree
one on continuous cyclic homology 
\[
* : \hcyc{A}{n} \otimes_\cc \hcyc{B}{m} \to \hcyc{A \grad B}{n+m+1}
\]

In the case where the unital Banach algebra $A$ is commutative we get an
interior product 
\[
* : \hcyc{A}{n} \otimes_\cc \hcyc{A}{m} \to \hcyc{A}{n+m+1}
\]
by composition of the exterior product with the map induced by the
multiplication $\nabla_* : A \grad A \to A$. 

For further details on the constructions given in this section we refer to
\cite{Loday2,Tsygan2}. 

%%%%%%%%%%%%%%%%%%%%%%%%%%%%%%%%%%%%%%%%%%8<%%%%%%%%%%%%%%%%%%%%%%%%%%%%%%%%%%%%%%

%%%%%%%%%%%%%%%%%%%%%%%%%%%%%%%%%%%%%%%%%%8<%%%%%%%%%%%%%%%%%%%%%%%%%%%%%%%%%%%%%%

%%%%%%%%%%%%%%%%%%%%%%%%%%%%%%%%%%%%%%%%%%8<%%%%%%%%%%%%%%%%%%%%%%%%%%%%%%%%%%%%%%

%%%%%%%%%%%%%%%%%%%%%%%%%%%%%%%%%%%%%%%%%%8<%%%%%%%%%%%%%%%%%%%%%%%%%%%%%%%%%%%%%%

\section{An exterior product on the relative $K$-theory of Banach
  algebras}\label{extrel} 
Let $A$ be a unital Banach algebra. Before giving the construction of the
exterior product, we recall the definition of the relative $K$-groups, as
introduced by M. Karoubi, \cite{Karoubi}. 

To this end, for each $n \in \nn_0$ we let $\Delta^n = \{(t_1,\ldots,t_n) \in
\rr^n\, | \, t_i \geq 0 \, , \, \sum_{i=1}^n t_i \leq 1\}$ denote the standard
$n$-simplex and we let $\F{0},\ldots,\F{n} \in \Delta^n$ denote the vertices. 

For each $p \in \nn \cup \{\infty\}$ we then associate a simplicial set
$R_p(A)$. In degree $n \in \nn_0$ it is given by the set of normalized
continuous maps  
\[
\arr{ccc}{
\sigma : \Delta^n \to GL_p(A)
& \q & \sigma(\F{0})=1_p 
}
\]
The face operators and degeneracy operators are given by 
\[
\begin{split}
& d_i(\sigma)(t_1,\ldots,t_{n-1}) = 
\fork{ccc}{
\sigma(1-\sum_{j=1}^{n-1}t_j,t_1,\ldots,t_{n-1})\cdot \sigma(\F{1})^{-1} 
\sep{for} 
j = 0 \\
\sigma(t_1,\ldots,t_{i-1},0,t_i,\ldots,t_{n-1})
\sep{for} j \in \{1,\ldots,n\} \\
} \\
& s_j(\sigma)(t_1,\ldots,t_{n+1}) = 
\fork{ccc}{
\sigma(t_2,\ldots,t_{n+1}) 
\sep{for} j = 0 \\
\sigma(t_1,\ldots,t_{i-1},t_i + t_{i+1},\ldots,t_{n+1}) 
\sep{for} j \in \{1,\ldots,n\} 
}
\end{split}
\]
Remark the extra factor $\sigma(\F{1})^{-1}$ in the expression for $d_0 :
R_p(A)_n \to R_p(A)_{n-1}$. We will often refer to the simplicial set
$R_\infty(A)$ by $R(A)$. 

The fundamental group of the pointed Kan complex $R_p(A)$ is given by 
\[
\pi_1(R_p(A)) = R_p(A)_1/\sim 
\]
where $\sim$ denotes the equivalence relation of homotopies with fixed end
points. The group structure is given by pointwise multiplication. For each $p
\in \{3,4,\ldots \} \cup \{\infty\}$ the commutator subgroup is seen to be
perfect and we can thus apply the plus construction to the geometric
realization of $R_p(A)$. 

\begin{definition}\cite{Karoubi}
By the \emph{relative $K$-groups} of the unital Banach algebra $A$ we will
understand the homotopy groups of the pointed topological space $|R(A)|^+$, 
\[
K_n^{\E{rel}}(A) := \pi_n(|R(A)|^+)
\]
\end{definition}

The relative $K$-groups relate the algebraic $K$-groups and topological
$K$-groups through the long exact sequence 
\begin{equation}\label{eq:xseqk}
 \dgARROWLENGTH=0.5\dgARROWLENGTH
\begin{diagram}
\node{\ldots } \arrow{e,t}{i}
     \node{K_{n+1}^{\T{top}}(A)} \arrow{e,t}{v}
          \node{K_n^{\T{rel}}(A)} \arrow{e,t}{\theta}
               \node{K_n(A)} \arrow{s,r}{i} \\
               \node{\ldots } 
          \node{K_{n-1}(A)} \arrow{w,b}{i} 
     \node{K_{n-1}^{\T{rel}}(A)} \arrow{w,b}{\theta}     
\node{K_n^{\T{top}}(A)} \arrow{w,b}{v}
\end{diagram}
\end{equation}
Here the map $\theta : K_n^{\T{rel}}(A) \to K_n(A)$ is induced by the map of
simplicial sets 
\[
\arr{ccc}{
\theta : R_p(A) \to BGL_p(A) & \q & 
\sigma \mapsto 
(\sigma(\F{0})\sigma(\F{1})^{-1},\ldots \sigma(\F{n-1})\sigma(\F{n})^{-1}) 
}
\]

For later use we give a result on the homology of the simplicial set
$R(A)$. For this, let $R^\infty(A)$ denote the corresponding simplicial set
with continuous maps replaced by smooth maps. We then have the following
isomorphism,  

\begin{lemma}\label{consmooth}
The simplicial map $i : R^\infty(A) \to R(A)$ given by inclusion, yields an
isomorphism in homology, $H_n(R^\infty(A)) \cong H_n(R(A))$. 
\end{lemma}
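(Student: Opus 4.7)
The plan is to prove that the inclusion $i$ is a chain homotopy equivalence of the associated chain complexes, from which the homology isomorphism follows. This adapts the classical argument identifying continuous and smooth singular homology on a smooth manifold to our setting: the target $GL_p(A) \subseteq M_p(A)$ is an open subset of a Banach algebra, and simplices carry the normalization $\sigma(\F{0}) = 1_p$ that must be preserved throughout.

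First I would construct a smoothing operator $\rho$. For $\sigma : \Delta^n \to GL_p(A)$ with $\sigma(\F{0}) = 1_p$, extend $\sigma$ to a continuous $M_p(A)$-valued map $\tilde{\sigma}$ on an open neighborhood of $\Delta^n$ in $\rr^n$ (for instance by precomposition with the nearest-point projection onto $\Delta^n$), convolve with a smooth scalar bump $\phi_\epsilon$ of unit mass and support radius $\epsilon$, and set
\[
\rho(\sigma)(t) := (\tilde{\sigma} * \phi_\epsilon)(\F{0})^{-1} \cdot (\tilde{\sigma} * \phi_\epsilon)(t).
\]
Compactness of $\Delta^n$ together with openness of $GL_p(A)$ in $M_p(A)$ guarantees that for $\epsilon$ small enough this is smooth and $GL_p(A)$-valued, and the renormalization enforces $\rho(\sigma)(\F{0}) = 1_p$. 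The straight-line homotopy
\[
h_\sigma(t, s) := (1-s)\sigma(t) + s \rho(\sigma)(t) \in M_p(A)
\]
lies in $GL_p(A)$ for the same small $\epsilon$ and visibly satisfies $h_\sigma(\F{0}, s) = 1_p$. Pulling $h_\sigma$ back along the standard prism decomposition of $\Delta^n \times [0,1]$ into $n+1$ copies of $\Delta^{n+1}$, each of which sends $\F{0}$ to $(\F{0}, 0)$ so that the normalization is preserved, yields an $(n+1)$-chain $H(\sigma) \in C_{n+1}(R_p(A))$ satisfying $\partial H + H \partial = i \rho - \T{id}$.

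The principal obstacle is that the smoothing parameter $\epsilon$ depends on $\sigma$, so $\rho$ and $H$ are not immediately a chain map and a chain homotopy on the nose. I would resolve this either by induction on simplicial degree, choosing $\epsilon$ at each stage small enough to be compatible with the finitely many already-smoothed faces and degeneracies of lower-dimensional simplices, or by exploiting the fact that barycentric subdivision is chain homotopic to the identity on $C_*(R(A))$: after sufficient subdivision every chain admits a uniform $\epsilon$, and the constructions of $\rho$ and $H$ descend coherently. The symmetric argument applied on $C_*(R^\infty(A))$ yields $\rho \circ i \simeq \T{id}$, completing the chain equivalence, and the case $p = \infty$ follows by passing to the colimit in $p$.
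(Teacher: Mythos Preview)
Your approach is correct and rests on the same core idea as the paper's proof: mollify by convolution with a compactly supported smooth bump to produce a smooth approximation, and use the resulting homotopy to build the chain equivalence. The implementations differ, however. The paper uses the convolution parameter itself as the homotopy variable, setting $H(\sigma)(r,t)=(\delta_r * \sigma)(t)$, and then observes that this homotopy is \emph{right} $GL_p(A)$-equivariant, $H(\sigma\cdot g)=H(\sigma)\cdot g$; this is precisely what is needed for compatibility with the exceptional face $d_0$, which involves right multiplication by $\sigma(\F{1})^{-1}$. You instead smooth once, renormalize at $\F{0}$ by \emph{left} multiplication, and connect to the original simplex by a straight-line homotopy pushed through the prism decomposition. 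Your route is more explicit about the chain-level bookkeeping (and you correctly flag the $\epsilon$-coherence issue and its standard resolutions), while the paper's terser argument hides those details behind the equivariance remark. One small point worth aligning with the paper: given the form of $d_0$, right renormalization $(\tilde\sigma*\phi_\epsilon)(t)\cdot(\tilde\sigma*\phi_\epsilon)(\F{0})^{-1}$ interacts more cleanly with the simplicial structure than the left renormalization you wrote down.
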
  
\begin{proof}
For each $r > 0$ we choose a smooth map $\delta_r \in C_c^\infty(\rr^n)$ which
is supported on the ball around $0$ with radius $r$ and which satisfies
$\delta_r \geq 0$ and $\int_{\rr^n} \delta_r dx= 1$. 

For each compactly supported continuous function $\sigma \in
C_c(\rr^n,M_p(A))$ with $\sigma(t) \in GL_p(A)$ for all $t \in \Delta^n$ we can
define the continuous homotopy $H(\sigma) : [0,1] \times \Delta^n \to M_p(A)$  
\[
H(\sigma)(r,t) = \fork{ccc}{
(\delta_r * \sigma)(t) & \q & r > 0 \\ 
\sigma(t) & \q & r = 0 
}  
\]
Here $*$ denotes the convolution product of compactly supported maps. We note
that $H(\sigma)(r, \cdot ) : \Delta^n \to GL_p(A)$ for sufficiently small $r >
0$. The result is then a consequence of the $GL_p(A)$-invariance of the
homotopy, $H(\sigma \cdot g) = H(\sigma) \cdot g$. 
\end{proof}

\subsection{A calculation of the second relative $K$-group}\label{calcre}
In this section we show that the second relative $K$-group is isomorphic to
the second homology group of a certain simplicial set. The result is thus
similar to the result in \cite{Loday, Quillen, Rosenberg}. Here the second
algebraic $K$-group of a ring (using Quillen's definition) is shown to agree
with the second homology group of the elementary matrices over the ring. The
present calculation is relevant for our explicit definition of the exterior
product on the relative $K$-groups. 

Let $F(A)_1 \subseteq R(A)_1$ denote the smallest normal subgroup of $R(A)_1$
which contains the commutators 
\[
\arr{ccc}{
\sigma \tau \sigma^{-1} \tau^{-1} \in F(A)_1 
& \q & 
\forall \sigma,\tau \in R(A)_1
}
\] 
and which satisfies 
\[
\arr{ccc}{
\alpha \in F(A)_1 \Rightarrow \overline{\alpha} \in F(A)_1 
\sep{and} 
\alpha \in F(A)_1 \Rightarrow \big(g \alpha g^{-1} \in F(A)_1 \q \forall g \in
GL(A) \big) 
}
\]
Here $\overline{\alpha} : t \mapsto \alpha(1-t)\cdot \alpha(\F{1})^{-1}$
denotes the inverse path. 

Now, we let $\sim_F$ denote the equivalence relation on $R(A)$ which is
defined degreewise by  
\[
x \sim_F y \Leftrightarrow \big( \forall I \in \Delta[n]_{n-1} \exists \tau \in
F(A)_1 : d_I(x) \tau = d_I(y) \big)
\]
Here $d_I = d_{i_1}\ldots d_{i_{n-1}} : R(A)_n \to R(A)_1$ for each $0 \leq i_1
< \ldots < i_{n-1} \leq n$. We let $Q(A)= R(A)/\sim_F$ denote the quotient
simplicial set. 

\begin{prop}
The quotient map $\pi : R(A) \to Q(A)$ is a Kan fibration. 
\end{prop}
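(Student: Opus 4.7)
The plan is to verify directly the Kan lifting property: given a horn in $R(A)$ specified by $(n-1)$-simplices $x_0,\ldots,\widehat{x_k},\ldots,x_n$ satisfying the usual compatibility $d_i x_j = d_{j-1} x_i$ for $i<j$, $i,j \neq k$, together with a lift of its image to $y \in Q(A)_n$, one must produce $x \in R(A)_n$ with $d_i x = x_i$ for $i \neq k$ and $\pi(x)=y$.

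The first step is to invoke that $R(A)$ itself is a Kan complex (in line with the paper's earlier remark that $R_p(A)$ is a pointed Kan complex) to fill the given horn to some $z \in R(A)_n$ satisfying $d_i z = x_i$ for $i \neq k$. Independently, choose any lift $\widetilde{y} \in R(A)_n$ of $y$, so that $d_i \widetilde{y} \sim_F x_i$ for $i \neq k$. A routine verification using the simplicial identities shows that $\sim_F$ is preserved by the iterated face operators $d_I$ of the appropriate size, which is what allows the edge-level $F(A)_1$-conditions to propagate from the horn data.

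The case $n \geq 3$ is then essentially automatic. Here every $1$-edge $I$ of $\Delta^n$ is contained in at least one $(n-1)$-face $d_i$ with $i \neq k$, because an edge belongs to $n-1 \geq 2$ of the $n+1$ faces. Factoring $d_I = d_{I'} d_i$ for such an $i$ gives
\[
d_I z \;=\; d_{I'}(x_i) \;\sim_F\; d_{I'}(d_i \widetilde{y}) \;=\; d_I \widetilde{y}
\]
in $R(A)_1$. Hence $z \sim_F \widetilde{y}$ and $x := z$ serves as the required filler. The case $n = 1$ is likewise trivial: $R(A)_0 = \{\ast\}$ carries no horn data, so any lift of $y \in Q(A)_1$ to $R(A)_1$ works.

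The main obstacle is the case $n = 2$, where the edge $d_k z$ opposite vertex $k$ is not pinned down by the horn, and one must adjust $z$ so that $d_k z \sim_F d_k \widetilde{y}$ as well. I would approach this by constructing a corrector $2$-simplex $\beta \in R(A)_2$ with $d_i \beta$ equal to the constant $1$-simplex for $i \neq k$ and with $d_k \beta$ an arbitrary prescribed element of $F(A)_1$. The explicit closure properties defining $F(A)_1$—containing all commutators, closed under path reversal, and invariant under $GL(A)$-conjugation—supply enough elementary correctors, which can be assembled via iterated Kan filling in $R(A)$ and then pasted onto $z$ (with careful bookkeeping of the twist in the face operator $d_0$) to modify $d_k z$ by any desired element of $F(A)_1$ while leaving the other faces untouched. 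Ensuring that the resulting simplex still satisfies the simplicial identities at the level of $R(A)$, and that every step respects the $\pi(\cdot)$-image, is where the technical weight of the argument lies.
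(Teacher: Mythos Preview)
Your overall strategy is inverted relative to the paper's. You begin with a horn filler $z\in R(A)_n$ (so the faces $d_iz=x_i$ for $i\neq k$ are already correct) and then attempt to adjust $z$ so that $\pi(z)=y$. The paper does the reverse: it starts from an arbitrary lift $z\in R(A)_2$ of $y$ (so $\pi(z)=y$ holds from the outset) and then corrects the \emph{faces} by pointwise multiplication with explicit paths built from the $\tau_i\in F(A)_1$ satisfying $d_i(z)\tau_i=x_i$. For $k=0$ the formula is simply
\[
x(t_1,t_2)=z(t_1,t_2)\,\tau_1(t_2)\,\tau_2(t_1),
\]
and the cases $k=1,2$ are similar one-line expressions involving a reversed path and a conjugation. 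One checks directly that $d_ix=x_i$ for $i\neq k$, while the new $d_kx$ differs from $d_kz$ by a product of $GL(A)$-conjugates and reversals of the $\tau_i$, hence still lies in $F(A)_1$ by the very closure properties defining that subgroup. No corrector simplex and no iterated Kan filling are needed.

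Your treatment of $n\geq 3$---observing that every edge of $\Delta^n$ lies in some face $d_i$ with $i\neq k$, so the unmodified horn filler already has the right $\pi$-image---is correct, and is a cleaner justification than the paper offers (it simply declares $n=2$ the only problematic case). Where your sketch is vulnerable is exactly the $n=2$ corrector plan. A $\beta\in R(A)_2$ with two constant faces forces $d_k\beta$ to be a \emph{loop} (compare the vertex constraints at the three corners), and for such a $\beta$ to exist at all that loop must be nullhomotopic in $GL(A)$; meanwhile pointwise multiplication does not commute with the twisted face $d_0$, since $d_0(z\beta)=d_0z\cdot z(\F{1})\,d_0\beta\,z(\F{1})^{-1}$ rather than $d_0z\cdot d_0\beta$. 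These issues are not fatal, but untangling them pushes you toward the same kind of explicit formula the paper writes down directly. Fixing $\pi$ first and faces second is what makes $n=2$ a short computation.
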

\begin{proof}
Suppose that $x_0,\ldots,x_{k-1},x_{k+1},\ldots,x_n \in R(A)_{n-1}$ are
compatible and suppose that $y \in Q(A)_n$ satisfies $d_i(y)=\pi(x_i)$ for all
$i \neq k$. We will only consider the problematic case of $n=2$. 

We start by choosing a $z \in R(A)_2$ with $\pi(z)=y$. Suppose that $k=0$. For
each $i \in \{1,2\}$ we choose a $\tau_i \in F(A)_1$ with $d_i(z) \cdot \tau_i
= x_i$. We define the appropriate lift $x \in R(A)_2$ by the formula 
\[
\arr{ccc}{
x(t_1,t_2) = z(t_1,t_2) \tau_1(t_2)\tau_2(t_1) 
& \q & \forall (t_1,t_2) \in \Delta^2 
}
\]
A calculation of the faces of $x \in R(A)_2$ then shows that $\pi(x)=y$, that
$d_1(x)=x_1$, and that $d_2(x)=x_2$.  

The cases where $k=1$ or $k=2$ are treated in a similar fashion. The
appropriate lifts are given by 
\[
\begin{split}
x(t_1,t_2) 
& =  z(t_1,t_2) \cdot z(1)^{-1}\tau_0(t_2)z(1) \cdot \tau_2(t_1+t_2) 
\qqq \T{and} \\
x(t_1,t_2) 
& = z(t_1,t_2) \cdot z(1)^{-1}\tau_1(1-t_1)\tau_1(1)^{-1} z(1) \cdot \tau_2(t_1+t_2)
\end{split}
\]
respectively. 
\end{proof}

Let $F(A)$ denote the fiber of the Kan fibration $\pi : R(A) \to Q(A)$

\begin{lemma}\label{calchom}
The inclusion $i : F(A) \to R(A)$ induces an isomorphism 
\[
\arr{ccc}{
\pi_n(i) : \pi_n(F(A)) \to \pi_n(R(A)) 
\sep{for all} n \geq 2
}
\]
The fundamental group of the fiber $F(A)$ equals the commutator subgroup of
$\pi_1(R(A))$ and the induced map 
\[
\pi_1(i) : \big[\pi_1(R(A)),\pi_1(R(A))\big] \to \pi_1(R(A))
\]
is the inclusion. 
\end{lemma}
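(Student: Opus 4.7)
The plan is to apply the long exact sequence of the Kan fibration $\pi : R(A) \to Q(A)$ established in the previous proposition. The crucial input is that $F(A)_n$ contains every commutator of $R(A)_n$ with respect to pointwise multiplication: given $x, y \in R(A)_n$, every edge of $[x, y]$ is a commutator $[d_I(x), d_I(y)] \in F(A)_1$, so $[x,y] \in F(A)_n$ by the defining property of $F(A)_n$.

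First I would show directly that $\pi_n(Q(A)) = 0$ for all $n \geq 2$. A representative of a class in $\pi_n(Q(A))$ is a simplex $[z] \in Q(A)_n$ with every boundary trivial, that is $d_i z \in F(A)_{n-1}$ for $i = 0, \ldots, n$, equivalently every $1$-face of each $d_i z$ lies in $F(A)_1$. The combinatorial fact that for $n \geq 2$ any edge $\{a,b\} \subseteq \{0,\ldots,n\}$ appears as an edge of some face $d_i z$ (pick any $i \in \{0,\ldots,n\}\setminus\{a,b\}$) then forces every edge of $z$ into $F(A)_1$, so $z \in F(A)_n$ and $[z] = 0$. Feeding $\pi_n(Q(A)) = 0$ for $n \geq 2$ into the long exact sequence
\[
\ldots \to \pi_{n+1}(Q(A)) \to \pi_n(F(A)) \to \pi_n(R(A)) \to \pi_n(Q(A)) \to \ldots
\]
yields the isomorphism $\pi_n(i)$ for $n \geq 2$ and an injection $\pi_1(F(A)) \hookrightarrow \pi_1(R(A))$ with image $\ker(\pi_1(R(A)) \to \pi_1(Q(A)))$.

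Finally I would identify this kernel with $[\pi_1(R(A)),\pi_1(R(A))]$. The inclusion $\supseteq$ is automatic: since $F(A)_1$ contains the commutator subgroup of $R(A)_1$, the quotient $Q(A)_1 = R(A)_1/F(A)_1$ is abelian and hence so is $\pi_1(Q(A))$. For the reverse, an element of the kernel is represented by $d_2 z \cdot \alpha$ for some $\alpha \in F(A)_1$ and some $z \in R(A)_2$ with $d_0 z, d_1 z \in F(A)_1$; the Kan identity $[d_2 z] = [d_1 z] \cdot [d_0 z]^{-1}$ in $\pi_1(R(A))$ then reduces the problem to showing that the image of $F(A)_1$ in $\pi_1(R(A))$ lies in the commutator subgroup. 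This last verification is the main obstacle and is handled by checking that each operation generating $F(A)_1$ preserves the commutator subgroup of $\pi_1(R(A))$: commutators and inverse paths are accounted for tautologically, while conjugation by $R(A)_1$ (inner automorphisms) and conjugation by $GL(A)$ (outer automorphisms induced from the pointwise action on $R(A)$) both stabilize the characteristic commutator subgroup.
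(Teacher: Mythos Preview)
Your argument is correct, and it reorganizes the paper's logic in a useful way. The paper proves the Lemma first (treating only the $\pi_1$ statement in detail) and then deduces $\pi_n(Q(A))=0$ as a Corollary via the long exact sequence; you run this in reverse, establishing $\pi_n(Q(A))=0$ for $n\geq 2$ directly from the edge-description of $\sim_F$ and then reading off the Lemma from the fibration sequence. Your order has the advantage that injectivity of $\pi_1(i)$ comes for free from $\pi_2(Q(A))=0$, a point the paper does not make explicit.

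For the identification of $\pi_1(F(A))$ with the commutator subgroup, both arguments reduce to showing that the generators of $F(A)_1$ land in $[\pi_1(R(A)),\pi_1(R(A))]$. The paper does this by explicit homotopies: it records $\overline{\alpha}\sim\alpha^{-1}$ and, more interestingly, realizes $g\alpha g^{-1}$ for $g\in GL(A)$ as an honest conjugate by a path $\gamma_g\in R(A)_1$ ending at $\left(\begin{smallmatrix} g & 0 \\ 0 & g^{-1}\end{smallmatrix}\right)$ (a Whitehead-lemma trick). You instead observe that each generating operation induces a group automorphism of $\pi_1(R(A))$ and therefore preserves the characteristic commutator subgroup. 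Your route is shorter; the paper's explicit homotopy for $GL(A)$-conjugation is reused later (Lemma~\ref{conjid}), so there is some economy in writing it down once.

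One small caveat: your phrase ``inverse paths are accounted for tautologically'' hides the fact that $[\overline{\alpha}]=[\alpha]^{-1}$ in $\pi_1(R(A))$ with respect to the \emph{pointwise} group law. This is true (Eckmann--Hilton, since pointwise multiplication and path concatenation agree on $\pi_1$), but $\alpha\mapsto\overline{\alpha}$ is not a group anti-homomorphism on $R(A)_1$ itself, so a one-line justification is warranted.
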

\begin{proof}
We will only consider the calculation of the fundamental group of the
fiber. For this, note that  
\[
\arr{ccc}{
\pi_1(F(A)) = F(A)_1 / \sim \sep{and} \pi_1(R(A)) = R(A)_1 / \sim
}
\]
where $\sim$ in both cases denotes homotopies through $GL_p(A)$ with fixed
endpoints. The proof of the lemma is then essentially a matter of checking
that each element in $\pi_1(F(A))$ can be represented by a commutator of
elements in $R(A)_1$. This follows since 
\[
\arr{ccc}{
\overline{\alpha} \sim \alpha^{-1} \sep{for each} \alpha \in R(A)_1
}
\]
and since  
\[
\arr{ccc}{
g\alpha g^{-1} \sim \gamma_g 
\mat{cc}{\alpha_{1/2} & 0 \\
0 & 1 } 
(\gamma_g)^{-1} 
\sep{for each} g \in GL(A) \, , \, \alpha \in R(A)_1
}
\]
Here $\alpha_{1/2}(t) = 1$ for all $t \in [0,1/2)$ and $\gamma_g \in R(A)_1$
satisfies $\gamma_g(t) = \mat{cc}{g & 0   \\ 0   & g^{-1}}$ for all $t \in
[1/4, 1]$. 
\end{proof}

\begin{corollary}
The homotopy groups of $Q(A)$ are given by 
\[
\pi_n(Q(A)) = \fork{ccc}{
0 
\sep{for} n \geq 2 \\
\pi_1(R(A))
/[\pi_1(R(A)),\pi_1(R(A))] 
\sep{for} n=1
}
\]
The induced map $\pi_1(\pi) : \pi_1(R(A)) \to \pi_1(Q(A))$ is the quotient
map. 
\end{corollary}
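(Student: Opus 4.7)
The plan is to apply the long exact sequence of homotopy groups associated with the Kan fibration $\pi : R(A) \to Q(A)$, using Lemma \ref{calchom} as the main input. Since $R(A)$ has a unique $0$-simplex (the constant map at the identity), both $R(A)$ and its quotient $Q(A)$ are reduced simplicial sets, and hence so is the fiber $F(A)$; in particular all three are connected, so $\pi_0$ of each is trivial and the long exact sequence terminates cleanly.

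First I would treat the case $n \geq 2$. The relevant segment of the long exact sequence reads
\[
\pi_n(F(A)) \stackrel{\pi_n(i)}{\to} \pi_n(R(A)) \to \pi_n(Q(A)) \to \pi_{n-1}(F(A)) \stackrel{\pi_{n-1}(i)}{\to} \pi_{n-1}(R(A)).
\]
By Lemma \ref{calchom}, the map $\pi_n(i)$ is an isomorphism for every $n \geq 2$, and the map $\pi_1(i)$ is the injective inclusion of the commutator subgroup. Thus the left arrow in the displayed sequence is surjective, while the right arrow is injective (either as an isomorphism when $n \geq 3$, or as the commutator inclusion when $n = 2$). Exactness then forces $\pi_n(Q(A)) = 0$ for all $n \geq 2$.

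Next I would handle the fundamental group. The tail of the long exact sequence is
\[
\pi_1(F(A)) \stackrel{\pi_1(i)}{\to} \pi_1(R(A)) \stackrel{\pi_1(\pi)}{\to} \pi_1(Q(A)) \to \pi_0(F(A)).
\]
Since $F(A)$ is reduced, $\pi_0(F(A))$ is trivial, so $\pi_1(\pi)$ is surjective. By Lemma \ref{calchom} the image of $\pi_1(i)$ is exactly the commutator subgroup $[\pi_1(R(A)), \pi_1(R(A))]$, so exactness identifies $\pi_1(Q(A))$ with the abelianization $\pi_1(R(A))/[\pi_1(R(A)),\pi_1(R(A))]$, and under this identification $\pi_1(\pi)$ is the canonical projection.

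The only non-routine point is ensuring the hypotheses for the long exact sequence are in place, namely that $F(A), R(A), Q(A)$ are reduced Kan complexes and that $\pi$ is a pointed Kan fibration; the first was observed above and the second is exactly the previous proposition. Once these are noted, the argument is a formal consequence of Lemma \ref{calchom} and the exactness of the homotopy sequence of a fibration.
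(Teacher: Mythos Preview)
Your proof is correct and follows exactly the approach of the paper: the paper's proof is the one-line statement that the result ``is immediate from the long exact sequence of homotopy groups associated with the Kan fibration $F(A) \to R(A) \to Q(A)$ and Lemma \ref{calchom},'' and you have simply spelled out the details of this argument carefully.
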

\begin{proof}
This is immediate from the long exact sequence of homotopy groups associated
with the Kan fibration $F(A) \to R(A) \to Q(A)$ and Lemma \ref{calchom}. 
\end{proof}

Let $\C F$ denote the homotopy fiber of the map $\pi^+ : |R(A)|^+ \to |Q(A)|$
induced by the quotient map $\pi : R(A) \to Q(A)$. 

\begin{prop}
The inclusion $i : F(A) \to R(A)$ gives rise to a homotopy equivalence 
\[
f^+ : |F(A)|^+ \to \C F
\]
\end{prop}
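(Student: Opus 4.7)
The plan is to view this as an instance of Quillen's theorem that the plus construction is compatible with fibrations over aspherical bases. By the preceding Corollary, the simplicial set $Q(A)$ has homotopy groups concentrated in degree one, so the realization $|Q(A)|$ is an Eilenberg--MacLane space $K(G,1)$ with $G=\pi_1(R(A))/[\pi_1(R(A)),\pi_1(R(A))]$. The Kan fibration $\pi:R(A)\to Q(A)$ realizes to a Serre fibration $|F(A)|\to|R(A)|\to|Q(A)|$. By Lemma \ref{calchom}, the inclusion identifies $\pi_1(F(A))$ with the perfect normal subgroup $P=[\pi_1(R(A)),\pi_1(R(A))]$, which coincides both with the kernel of $\pi_*$ and with the subgroup quotiented in the plus construction $|R(A)|\to|R(A)|^+$.

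These are precisely the hypotheses of Quillen's plus-fibration lemma: given a Serre fibration and a perfect normal subgroup of $\pi_1$ of the total space lying in the image of the fiber and in the kernel of the base-projection, the plus constructions assemble into a new homotopy fibration. Applied in our situation, it yields a homotopy fibration $|F(A)|^+\to|R(A)|^+\to|Q(A)|$, where the plus construction on $|F(A)|$ is taken with respect to all of $\pi_1(F(A))=P$ (which is perfect by the hypothesis on $p$). Since $\C F$ is by definition the homotopy fiber of $\pi^+$, this produces a canonical homotopy equivalence $f^+:|F(A)|^+\to\C F$.

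To carry out the argument directly rather than citing the lemma, I would proceed in four steps. First, $\pi_1(\C F)=0$ by the long exact sequence of $\C F\to|R(A)|^+\to|Q(A)|$, using that $\pi^+$ is an isomorphism on $\pi_1$ and $\pi_2(|Q(A)|)=0$. Second, $|F(A)|^+$ is simply connected, since the perfect group $\pi_1(F(A))=P$ is entirely killed. Third, the composite $|F(A)|\to|R(A)|\to|R(A)|^+\to|Q(A)|$ is nullhomotopic because $P$ maps trivially to $\pi_1(|Q(A)|)$; choosing a nullhomotopy yields a natural map $|F(A)|\to\C F$, which by the universal property of the plus construction factors as $f^+:|F(A)|^+\to\C F$. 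Fourth, comparing the Serre spectral sequences of the fibrations $|F(A)|\to|R(A)|\to|Q(A)|$ and $\C F\to|R(A)|^+\to|Q(A)|$ — both converging to the same graded group, since $|R(A)|\to|R(A)|^+$ is a homology equivalence with local coefficients pulled back from $|Q(A)|$ (as $P$ acts trivially on such systems) — one obtains via Zeeman's comparison theorem that $f^+$ is a homology equivalence, hence by Whitehead's theorem a homotopy equivalence. The main obstacle in the direct approach is verifying the compatibility of the local coefficient systems on $|Q(A)|$ under the map of fibrations, which comes down to identifying the $G$-monodromy on $H_*(\C F)$ with that on $H_*(|F(A)|)$ via the construction of $f^+$ by lifting through $|R(A)|$.
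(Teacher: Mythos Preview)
Your proposal is correct and follows essentially the same route as the paper: both invoke the plus-constructive fibration theorem, with the paper citing Berrick's characterization (using only that $\pi_1(Q(A))$ is abelian) while you phrase it as Quillen's plus-fibration lemma and additionally sketch the underlying Serre spectral sequence comparison. Your write-up is far more detailed than the paper's one-line citation, and the direct argument you outline is precisely the standard proof of the cited result.
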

\begin{proof}
This follows from \cite{Berrick} since $\pi_1(Q(A))$ is abelian. 
\end{proof}

With this precise description of the homotopy fiber $\C F$ in hand, we are
able to obtain the desired calculation of the second relative $K$-group. 

\begin{corollary}\label{simpleisom}
The space $|F(A)|^+$ is simply connected and the map 
\[
i^+ : |F(A)|^+ \to |R(A)|^+
\]
induced by the inclusion $i : F(A) \to R(A)$ yields an isomorphism 
\[
\pi_n(i^+) : \pi_n(|F(A)|^+) 
\to \pi_n(|R(A)|^+) 
= K_n^{\E{rel}}(A) 
\] 
for all $n\geq 2$. 
\end{corollary}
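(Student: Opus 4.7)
The plan is to combine the previous Proposition, which gives a homotopy equivalence $f^+ : |F(A)|^+ \to \C F$ where $\C F$ is the homotopy fibre of $\pi^+ : |R(A)|^+ \to |Q(A)|$, with the Corollary just established computing the homotopy groups of $Q(A)$. The crucial observation is that $\pi_n(Q(A)) = 0$ for $n \geq 2$ and $\pi_1(Q(A))$ is abelian, so $|Q(A)|$ is an Eilenberg--MacLane space of type $K(\pi_1(Q(A)),1)$.

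First I would note that by the universal property of the plus construction and the fact that $\pi_1(\pi)$ is the abelianization map, the induced map $\pi_1(\pi^+) : \pi_1(|R(A)|^+) \to \pi_1(|Q(A)|)$ is an isomorphism. Next I would apply the long exact sequence of homotopy groups for the fibration $\C F \to |R(A)|^+ \to |Q(A)|$. The vanishing $\pi_n(|Q(A)|) = 0$ for $n \geq 2$ immediately yields isomorphisms $\pi_n(\C F) \cong \pi_n(|R(A)|^+)$ for all $n \geq 2$, while the segment
\[
0 = \pi_2(|Q(A)|) \to \pi_1(\C F) \to \pi_1(|R(A)|^+) \xrightarrow{\cong} \pi_1(|Q(A)|)
\]
forces $\pi_1(\C F) = 0$. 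Transferring these conclusions along the homotopy equivalence $f^+$ gives both the simple connectedness of $|F(A)|^+$ and the desired isomorphisms in degrees $n \geq 2$.

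The remaining bookkeeping is to verify that the isomorphism produced in this way coincides with $\pi_n(i^+)$. This reduces to checking that the composite $|F(A)|^+ \xrightarrow{f^+} \C F \to |R(A)|^+$ is homotopic to $i^+$, which in turn follows from naturality of the fibre-replacement construction applied to the commutative square relating $\pi$ and $\pi^+$ through the geometric realization of the Kan fibration $F(A) \to R(A) \to Q(A)$. I expect the main subtlety to lie in this final identification of the map rather than in the homotopy-theoretic computation itself, since one must keep track of the model of $\C F$ used in the application of \cite{Berrick} and ensure it is compatible with the strict inclusion $i : F(A) \to R(A)$.
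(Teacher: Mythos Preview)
Your proposal is correct and follows essentially the same route as the paper: the paper's proof simply states that the result is ``immediate from the long exact sequence of homotopy groups arising from the fibration $|F(A)|^+ \to |R(A)|^+ \to |Q(A)|$,'' which is exactly the fibration $\C F \to |R(A)|^+ \to |Q(A)|$ you analyse, after identifying $|F(A)|^+$ with $\C F$ via the previous Proposition. Your write-up is in fact more careful than the paper's, since you explicitly check that $\pi_1(\pi^+)$ is an isomorphism (needed to conclude $\pi_1(\C F)=0$) and you flag the compatibility of the resulting isomorphism with $\pi_n(i^+)$, a point the paper passes over in silence.
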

\begin{proof}
This is immediate from the long exact sequence of homotopy groups arising from
the fibration $|F(A)|^+ \to |R(A)|^+ \to |Q(A)|$. 
\end{proof}

\begin{corollary}
The second relative $K$-group of the unital Banach algebra $A$ is isomorphic
to the second homology group of the simplicial set $F(A)$ 
\[
K_2^{\E{rel}}(A) \cong H_2(F(A))
\]
\end{corollary}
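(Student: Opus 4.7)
The plan is to chain together Corollary \ref{simpleisom} with two classical results: the Hurewicz theorem and the homological invariance of the plus construction. By Corollary \ref{simpleisom} we already have the isomorphism
\[
K_2^{\T{rel}}(A) = \pi_2(|R(A)|^+) \cong \pi_2(|F(A)|^+),
\]
together with the crucial fact that the space $|F(A)|^+$ is simply connected. This reduces the problem to identifying $\pi_2(|F(A)|^+)$ with the simplicial homology group $H_2(F(A))$.

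First I would invoke the Hurewicz theorem applied to the simply connected space $|F(A)|^+$, which yields a natural isomorphism $\pi_2(|F(A)|^+) \cong H_2(|F(A)|^+;\zz)$. Next I would use the defining property of Quillen's plus construction: since $|F(A)|^+$ is obtained from $|F(A)|$ by attaching $2$-cells to kill a perfect normal subgroup of $\pi_1$ and then attaching $3$-cells to cancel the resulting spurious $H_2$ classes, the canonical map $|F(A)| \to |F(A)|^+$ induces an isomorphism on integral homology in every degree; in particular $H_2(|F(A)|^+) \cong H_2(|F(A)|)$. Finally, the singular homology of the geometric realization of a simplicial set agrees with its simplicial homology, so $H_2(|F(A)|) \cong H_2(F(A))$.

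Composing these isomorphisms gives $K_2^{\T{rel}}(A) \cong H_2(F(A))$ as required. There is really no substantial obstacle here, since all the conceptual content has already been absorbed into the preceding lemmas and into Corollary \ref{simpleisom} (the identification of $\pi_1(F(A))$ with the commutator subgroup and the resulting simple connectivity of $|F(A)|^+$). The only point which deserves a remark is that the isomorphism is natural in $A$, and that the explicit cycles realizing $H_2(F(A))$ can later be used for the construction of the exterior product on relative $K$-theory carried out in Section \ref{extrel}.
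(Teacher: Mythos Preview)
Your proof is correct and follows essentially the same route as the paper: invoke Corollary \ref{simpleisom} for simple connectivity of $|F(A)|^+$, apply the Hurewicz theorem, and then use the homology invariance of the plus construction together with the identification of simplicial and singular homology of the realization. The paper's version is simply a terser packaging of exactly these steps.
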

\begin{proof}
Since $|F(A)|^+$ is simply connected the Hurewicz homomorphism  
\[
h_2 : \pi_2(|F(A)|^+) \to H_2(|F(A)|^+) \cong H_2(|F(A)|) \cong H_2(F(A))
\]
is an isomorphism. But the group $\pi_2(|F(A)|^+)$ is isomorphic to
$K_2^{\T{rel}}(A)$ by Corollary \ref{simpleisom}. 
\end{proof}

\subsection{An H-group structure on $|R(A)|^+$}\label{hstruct}
In this section we show that a pointwise version of the direct sum on $GL(A)$
determines a commutative $H$-group structure on $|R(A)|^+$. Our exposition
will follow \cite[Section $1.2$]{Loday} and \cite{Wagoner} closely. 

Let $A$ be a unital Banach algebra. We define the direct sum on the simplicial
set $R(A)$ as a pointwise version of the direct sum on $GL(A)$, thus 
\[
\arr{ccc}{
\oplus : R(A) \times R(A) \to R(A) & \q & (\sigma,\tau) \mapsto (t \mapsto
\sigma(t) \oplus \tau(t) ) 
}
\]
Let $\oplus^+ : |R(A) \times R(A)|^+ \to |R(A)|^+$ denote the map induced by
functoriality of the geometric realization and the
plus-construction. Furthermore, let 
\[
k^{-1} : |R(A)|^+ \times |R(A)|^+ \to |R(A) \times R(A)|^+ 
\]
denote some homotopy inverse of the homotopy equivalence given by the
projection onto each factor. We then define the sum on $|R(A)|^+$ as the
composition 
\[
+ = \oplus^+ \circ k^{-1} : |R(A)|^+ \times |R(A)|^+ \to |R(A)|^+ 
\]
This will be the composition in our commutative $H$-group structure on
$|R(A)|^+$. The neutral element will be given by the constant map $1 :
\Delta^n \to GL(A)$. 

Now, to each injection $u : \nn \to \nn$ there is a group homomorphism $u :
GL(A) \to GL(A)$ defined by 
\[
u(g)_{ij} = \fork{ccc}{
g_{kl} \sep{for} i=u(k), j=u(l) \\
\delta_{ij} \sep{\q } \T{elsewhere}
}
\]
We extend this construction to a pointwise version, associating a simplicial map 
\[
\arr{ccc}{
u : R(A) \to R(A) & \q & \sigma \mapsto \big(t \mapsto u(\sigma(t))\big)
}
\]
to each injective map $u : \nn \to \nn$. We let $u^+ : |R(A)|^+ \to |R(A)|^+$
denote the map induced by functoriality. 

\begin{lemma}\label{conjid}
For each elementary matrix $g \in E(A)$ the (pre)-simplicial maps 
\[
\arr{ccc}{
\E{Ad}_g : R(A) \to R(A) \sep{and} \E{Ad}_g : F(A) \to F(A) 
}
\]
given by $\sigma \mapsto g \sigma g^{-1}$ are homotopic to the identity. In
particular, for each injection $u : \nn \to \nn$ we have 
\[
\arr{ccc}{
u_* = \E{Id} : H_*(R(A)) \to H_*(R(A)) 
\sep{and}
u_* = \E{Id} : H_*(F(A)) \to H_*(F(A))
}
\]
\end{lemma}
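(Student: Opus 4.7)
The strategy is to connect $g$ to the identity by a smooth path inside $GL(A)$ and transport this path into a pre-simplicial homotopy between $\E{Ad}_g$ and the identity on $R(A)$. Since $E(A)$ is generated by elementary transvections $e_{ij}(a) = 1 + aE_{ij}$ with $i \neq j$, and each such transvection is joined to $1$ by the straight line $s \mapsto 1 + saE_{ij}$ inside $GL(A)$, concatenating these paths for any factorization of $g$ yields a smooth map $\gamma : [0,1] \to GL(A)$ with $\gamma(0) = 1$ and $\gamma(1) = g$.

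For each $s \in [0,1]$ the rule $\sigma \mapsto \gamma(s)\sigma\gamma(s)^{-1}$ preserves the normalization $\sigma(\F{0}) = 1_p$ and hence defines a self-map of $R(A)$. The assignment $(t,s) \mapsto \gamma(s)\sigma(t)\gamma(s)^{-1}$ therefore provides a continuous map $\Delta^n \times [0,1] \to GL(A)$ agreeing with $\sigma$ at $s = 0$ and with $\E{Ad}_g(\sigma)$ at $s = 1$. Triangulating the prism $\Delta^n \times \Delta^1$ into its $n+1$ canonical top-dimensional simplices in the standard Eilenberg--Zilber fashion yields pre-simplicial homotopy operators $h_k : R(A)_n \to R(A)_{n+1}$; each resulting $(n+1)$-simplex still sends $\F{0}$ to $1_p$ and so belongs to $R(A)_{n+1}$. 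The induced chain homotopy establishes $(\E{Ad}_g)_* = \E{Id}$ on $H_*(R(A))$. For the restriction to $F(A)$ nothing extra is required, since the defining properties of $F(A)_1 \subseteq R(A)_1$ include normality under conjugation by arbitrary elements of $GL(A)$; the same homotopy therefore descends to one between $\E{Ad}_g$ and the identity on $F(A)$, and yields $(\E{Ad}_g)_* = \E{Id}$ on $H_*(F(A))$ as well.

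For the final claim about an injection $u : \nn \to \nn$, I would fix a simplex $\sigma \in R(A)_n$ whose image lies in some $GL_p(A) \subseteq GL(A)$. On this subgroup $u$ coincides with conjugation by a partial permutation matrix $P$ realizing $u|_{\{1,\ldots,p\}}$ and fixing the complement. Using the infinite-dimensional slack of $GL(A)$, one can adjust $P$ by multiplying with a further permutation that acts trivially on the first $p$ coordinates, so as to arrange $P \in E(A)$ by the standard stabilization arguments underlying Whitehead's lemma. Thus on each simplex the map $u$ coincides with $\E{Ad}_P$ for some $P \in E(A)$, and the first part of the lemma delivers $u_* = \E{Id}$ on both $H_*(R(A))$ and $H_*(F(A))$.

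The main obstacle lies in Step 2: one has to carry out the explicit Eilenberg--Zilber decomposition of the prism at the level of the normalized simplicial set $R(A)$ and verify that each $(n+1)$-simplex in the decomposition still satisfies the vertex normalization at $\F{0}$. Once this bookkeeping is done, the reduction to $F(A)$ is automatic from the normality built into the definition of $F(A)_1$, and the injection statement follows from the $E(A)$-case together with the standard stability of permutation matrices inside $E(A)$.
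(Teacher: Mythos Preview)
Your overall strategy — pick a path $\gamma$ from $1$ to $g$ and turn it into a simplicial homotopy — is the same as the paper's, but the specific construction you propose does not work as stated. The standard Eilenberg--Zilber prism decomposition of the conjugation homotopy $H(t,s)=\gamma(s)\sigma(t)\gamma(s)^{-1}$ fails to produce a presimplicial (or even chain) homotopy in $R(A)$, because the face map $d_0$ here is not a plain restriction: it carries the extra factor $\sigma(\F{1})^{-1}$. A direct check already in degree $1$ shows the defect. With $h_0,h_1:R(A)_1\to R(A)_2$ the two prism simplices of $H$, one computes
\[
d_0h_1(\sigma)(t)=\gamma(t)\,\sigma(1)\,\gamma(t)^{-1}\sigma(1)^{-1},
\]
whereas the presimplicial identity would require $d_0h_1=h_0d_0$, i.e.\ the constant path $1$. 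Equivalently, $(\partial H+H\partial)(\sigma)=g\sigma g^{-1}-\sigma-\big(t\mapsto\gamma(t)\sigma(1)\gamma(t)^{-1}\sigma(1)^{-1}\big)$, so the extra term obstructs the chain homotopy.

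The paper sidesteps this by using the \emph{left-multiplication} homotopy rather than conjugation: it sets
\[
h_i(\sigma)=(s_n\cdots\widehat{s_i}\cdots s_0)(\gamma)\cdot s_i(\sigma),
\]
a pointwise product of degeneracies. Precisely because $d_0$ divides on the right by the value at $\F{1}$, one obtains $d_0h_0(\sigma)=g\sigma g^{-1}$ while $d_{n+1}h_n(\sigma)=\sigma$, and all the remaining presimplicial identities hold on the nose. This is the trick you are missing: the twisted $d_0$ converts left multiplication by $\gamma$ into $\mathrm{Ad}_g$ at the boundary.

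This also affects your treatment of $F(A)$. With the paper's formula one needs $\gamma\in F(A)_1$, which is available since every elementary generator is a commutator of paths in $R(A)_1$. Your appeal to $GL(A)$-normality of $F(A)_1$ does not suffice by itself: with your prism, the edge $d_0h_1(\sigma)$ computed above involves $\gamma$ directly, not merely a $GL(A)$-conjugate of an edge of $\sigma$, and there is no reason for it to lie in $F(A)_1$ unless $\gamma$ already does. The final reduction for an injection $u$ (choose a permutation matrix in $E(A)$ acting by conjugation on any finite set of simplices representing a homology class) is essentially the same as the paper's.
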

\begin{proof}
We will only consider the case of $F(A)$. Let $g \in E(A)$. Let $\gamma \in
F(A)_1$ satisfy $\gamma(0)=1$ and $\gamma(1)=g$. We then define a
presimplicial homotopy $h_i : F(A)_n \to F(A)_{n+1}$ by 
\[
h_i(\sigma) = (s_n \ldots s_{i+1} s_{i-1} \ldots s_0)(\gamma) \cdot
s_i(\sigma) 
\]
proving the first statement of the lemma. To prove the second statement, let
$u : \nn \to \nn$ be injective and note that to each \emph{finite} number of
elements $\sigma_1,\ldots,\sigma_m \in F(A)_n$ there is an elementary matrix
$g \in E(A)$ such that 
\[
\arr{ccc}{
u(\sigma_i) = g\cdot \sigma_i \cdot g^{-1} 
\sep{for all} i \in \{1,\ldots,m\} 
}
\]
Since each element in $x \in H_n(F(A))$ is represented by a finite number of element
in $F(A)_n$ we must have $u_*(x) = x$. 
\end{proof}

The results obtained in Section \ref{calcre} together with the classical
Whitehead theorem now allows us to show that the monoid of injections $u :
\nn \to \nn$ acts on $|R(A)|^+$ by homotopy equivalences.  

\begin{prop}
For each injection $u : \nn \to \nn$ the induced map 
\[
u^+ : |R(A)|^+ \to |R(A)|^+
\]
is a homotopy equivalence. 
\end{prop}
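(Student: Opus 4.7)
The plan is to verify the hypotheses of the classical Whitehead theorem for $u^+$, treating $\pi_1$ and the higher homotopy groups separately. Since the plus construction abelianizes the fundamental group and preserves integral homology, there are natural identifications $\pi_1(|R(A)|^+) = H_1(|R(A)|^+) = H_1(R(A))$, and Lemma \ref{conjid} identifies $u_*$ with the identity on $H_1(R(A))$. Consequently $u^+_*$ is the identity, and in particular an isomorphism, on $\pi_1(|R(A)|^+)$.

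For the higher homotopy groups I would route the argument through the simply connected space $|F(A)|^+$ from Section \ref{calcre}. First I would check that $u$ restricts to a simplicial self-map of $F(A)$. Given any $\alpha \in F(A)_1 \subseteq R(A)_1$, the image $\alpha(\Delta^1)$ lies in $GL_p(A)$ for some finite $p$, and on this piece $u$ agrees with conjugation by a suitable permutation matrix $g \in E(A)$. Since $F(A)_1$ is by definition closed under conjugation by all of $GL(A)$, this gives $u(\alpha) \in F(A)_1$, and simpliciality propagates the conclusion to every degree.

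With this in place, Lemma \ref{conjid} applied to $F(A)$ delivers $u_* = \mathrm{Id}$ on $H_*(F(A)) = H_*(|F(A)|^+)$. Because $|F(A)|^+$ is simply connected by Corollary \ref{simpleisom}, the classical Whitehead theorem implies that the restricted map $u^+ : |F(A)|^+ \to |F(A)|^+$ is itself a homotopy equivalence. The commutative square coming from the simplicial naturality of $u$ with respect to $i : F(A) \to R(A)$, combined with the isomorphism $\pi_n(i^+) : \pi_n(|F(A)|^+) \to \pi_n(|R(A)|^+)$ of Corollary \ref{simpleisom} (valid for $n \geq 2$), then forces $u^+_*$ to be an isomorphism on $\pi_n(|R(A)|^+)$ for all $n \geq 2$. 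A final invocation of Whitehead's theorem yields the desired conclusion on $|R(A)|^+$.

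The main subtlety is the verification that $u$ actually preserves $F(A)$; this is precisely where the $GL(A)$-conjugation invariance built into the definition of the normal subgroup $F(A)_1$ becomes essential, since the description of $u$ as conjugation by an elementary matrix coming from Lemma \ref{conjid} is only available after allowing such conjugations. Once this point is cleared, the remainder is a fairly direct assembly of Lemma \ref{conjid}, Corollary \ref{simpleisom}, and two applications of Whitehead.
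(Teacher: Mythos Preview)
Your proposal is correct and follows essentially the same route as the paper: handle $\pi_1$ via the identification with $H_1(R(A))$ and Lemma \ref{conjid}, handle $\pi_n$ for $n\geq 2$ by passing to the simply connected space $|F(A)|^+$ via Corollary \ref{simpleisom} and invoking Lemma \ref{conjid} on $H_*(F(A))$, then conclude with Whitehead. Your explicit verification that $u$ restricts to $F(A)$ is a point the paper leaves implicit in the statement of Lemma \ref{conjid}, but otherwise the arguments coincide.
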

\begin{proof}
We show that 
\[
u^+ : |R(A)|^+ \to |R(A)|^+ 
\]
is a weak equivalence and refer to Whitehead's theorem. 

For $n = 1$ we note that $\pi_1(|R(A)|^+) \cong H_1(R(A))$, so $\pi_1(u^+) :
\pi_1(|R(A)|^+) \to \pi_1(|R(A)|^+)$ is an isomorphism by Lemma \ref{conjid}. 

For $n \geq 2$ we note that $\pi_n(|F(A)|^+) \cong \pi_n(|R(A)|^+)$ by
Corollary \ref{simpleisom}. Since the space $|F(A)|^+$ is simply connected, we
will only need to show that $u^+ : |F(A)|^+ \to |F(A)|^+$ induces an
isomorphism in homology. However, this is a consequence of Lemma \ref{conjid}. 
\end{proof}

The commutative $H$-group properties of the sum $+ : |R(A)|^+ \times |R(A)|^+
\to |R(A)|^+$ and the neutral element $1 \in |R(A)|^+$ can now be obtained by
a rephrasing of the arguments in \cite[Section $1.2$]{Loday}. 

\begin{corollary}\label{pseudoid}
For each injection $u : \nn \to \nn$ the induced map 
\[
u^+ : |R(A)|^+ \to |R(A)|^+
\]
is homotopic to the identity. 
\end{corollary}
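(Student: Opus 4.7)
The plan is to combine Lemma \ref{conjid} with the commutative H-group structure on $|R(A)|^+$ from Section \ref{hstruct}, rephrasing the arguments of \cite[Section $1.2$]{Loday} and \cite{Wagoner}. The starting observation is that Lemma \ref{conjid} gives $u_*=\T{Id}$ on $H_*(R(A))$ for any injection $u:\nn\to\nn$. Since the plus construction preserves integral homology, this yields $(u^+)_*=\T{Id}$ on $H_*(|R(A)|^+;\zz)$, so $u^+$ and the identity agree on integral homology.

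With this homological information in hand, I would exploit the commutative H-group structure to form the difference $\Delta := u^+ - \T{Id}$ as a self-map of $|R(A)|^+$. Since the H-group subtraction is compatible with the induced maps on reduced homology, $\Delta$ induces the zero map on reduced integral homology. The problem then reduces to showing that $\Delta$ is nullhomotopic, for once this is established, H-group arithmetic immediately yields $u^+\simeq\T{Id}$.

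The main obstacle is precisely this final step: concluding that $\Delta$ is nullhomotopic from the vanishing of $\Delta_*$ on reduced homology. Since $|R(A)|^+$ is a commutative H-group, its fundamental group is abelian and acts trivially on all higher homotopy groups, so $|R(A)|^+$ is a simple space. The argument, carried out in detail in \cite[Section $1.2$]{Loday} and \cite{Wagoner}, proceeds by induction through the Postnikov tower of $|R(A)|^+$, using the H-group structure to identify the successive obstruction classes in $H^{n+1}(|R(A)|^+;\pi_n(|R(A)|^+))$ with invariants determined by $\Delta_*$ on homology; since these invariants vanish, so do the obstructions, and $\Delta$ is nullhomotopic as required.
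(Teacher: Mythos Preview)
Your approach is circular as written. You invoke the commutative $H$-group structure on $|R(A)|^+$, but in the paper that structure (Theorem~\ref{hgroup}) is established \emph{using} Corollary~\ref{pseudoid}: even checking that $1$ is a homotopy unit for $+$ amounts to showing that the map induced by $\sigma\mapsto\sigma\oplus 1_q$ is homotopic to the identity, which is precisely an instance of the statement you are trying to prove. So the subtraction $u^+-\T{Id}$, the homotopy inverse, and the simplicity of $|R(A)|^+$ are not yet available at this point in the argument.

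The paper's proof is quite different and avoids obstruction theory altogether. By the preceding proposition each $u^+$ is a homotopy \emph{equivalence}, so the assignment $u\mapsto [u^+]$ is a monoid homomorphism from the monoid $M$ of injections $\nn\to\nn$ into the \emph{group} of homotopy classes of self-equivalences of $|R(A)|^+$. Any monoid homomorphism into a group factors through the Grothendieck group of the source, and the Grothendieck group of $M$ is trivial (this is \cite[Lemma $1.2.8$]{Loday}); hence every $[u^+]$ is the identity class. This is in fact the route Loday and Wagoner take --- the Postnikov-tower scheme you attribute to them is not what they do, and in general the vanishing of $\Delta_*$ on reduced homology does not by itself force the obstruction classes in $H^{n+1}(|R(A)|^+;\pi_n(|R(A)|^+))$ to vanish.
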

\begin{proof}
This is a consequence of the Groethendieck group of the monoid of injections
$u : \nn \to \nn$ being trivial. See \cite[Lemma $1.2.8$]{Loday}. 
\end{proof}

\begin{prop}\label{hgroup}
The application $+ : |R(A)|^+ \times |R(A)|^+ \to |R(A)|^+$ and the neutral
element $1 \in |R(A)|^+$ define a commutative $H$-group structure on
$|R(A)|^+$. 
\end{prop}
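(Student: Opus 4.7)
The plan is to rephrase the Wagoner--Loday argument from \cite[Section $1.2$]{Loday}, taking Corollary \ref{pseudoid} as the central input. That corollary says every injection $u : \nn \to \nn$ induces a self-map $u^+$ on $|R(A)|^+$ homotopic to the identity, and this single fact will absorb each of the four $H$-group axioms: unit, homotopy associativity, homotopy commutativity, and existence of a two-sided homotopy inverse.

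For unit, associativity, and commutativity I would argue as follows. At the level of the simplicial set $R(A)$, the operations $\sigma \mapsto \sigma \oplus 1$, the pair $(\sigma,\tau) \mapsto \sigma \oplus \tau$ versus $(\sigma,\tau) \mapsto \tau \oplus \sigma$, and the pair $(\sigma,\tau,\rho) \mapsto (\sigma \oplus \tau) \oplus \rho$ versus $(\sigma,\tau,\rho) \mapsto \sigma \oplus (\tau \oplus \rho)$, all agree after postcomposing with a suitable injection $u : \nn \to \nn$, the $u$ being a permutation or reindexing of block coordinates in $GL(A)$. Geometrically realizing, applying the plus construction, and invoking Corollary \ref{pseudoid}, each such pair of maps into $|R(A)|^+$ becomes homotopic. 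Precomposing on the domain side with the homotopy inverse $k^{-1}$ lifts these homotopies to the corresponding homotopies between the maps built from $+$, which yields the unit, associativity, and commutativity axioms for the H-space structure.

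The existence of a two-sided homotopy inverse I would obtain from the classical fact that any path connected, homotopy associative $H$-space with a strict unit automatically admits one. Path connectedness of $|R(A)|^+$ is clear since the simplicial set $R(A)$ has only one $0$-simplex (the constant map with value $1$), so $|R(A)|$ is connected and the plus construction preserves connectedness. The only mildly delicate point in the whole argument, and the place I expect to spend the most care, is verifying that the homotopies produced at the level of $|R(A) \times R(A)|^+$ propagate through the auxiliary homotopy inverse $k^{-1}$ coherently; but since $k$ is a homotopy equivalence, any two choices of $k^{-1}$ are themselves homotopic and commute up to homotopy with the relevant composites, so this reduces to routine bookkeeping. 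The substantive mathematical content of the proof is entirely concentrated in Corollary \ref{pseudoid}, and what remains is the standard rewriting modelled on \cite[Section $1.2$]{Loday}.
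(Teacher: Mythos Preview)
Your proposal is correct and follows essentially the same route as the paper: the $H$-space axioms (unit, associativity, commutativity) are deduced from Corollary \ref{pseudoid} by noting that the relevant simplicial maps differ only by postcomposition with some $u^+$, and the homotopy inverse is obtained from the general fact (the paper cites \cite[Theorem $3.4$]{Stasheff}) that a connected $CW$-complex which is a homotopy associative $H$-space automatically has one. One small wording point: the unit here is a homotopy unit rather than a strict one, but Stasheff's result covers that case.
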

\begin{proof}
That the sum and the neutral element defines a homotopy associative and
homotopy commutative $H$-space structure follows from Corollary \ref{pseudoid}
since the appropriate maps are homotopic up to composition with some $u^+ :
|R(A)|^+ \to |R(A)|^+$ . The existence of a homotopy inverse is automatic
since we are working exclusively with connected $CW$-complexes, \cite[Theorem
$3.4$]{Stasheff}. 
\end{proof}

We end this section by showing that the map $\theta : |R(A)|^+ \to BGL(A)^+$,
induced by the simplicial map $\theta : \sigma \mapsto
(\sigma(\F{0})\sigma(\F{1})^{-1}, \ldots, \sigma(\F{n-1})\sigma(\F{n})^{-1})$,
respects the $H$-group structures. 

\begin{prop}\label{resph}
The map $\theta : |R(A)|^+ \to BGL(A)^+$ is an $H$-map. 
\end{prop}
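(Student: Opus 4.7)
The approach is to check compatibility of $\theta$ with the direct-sum structure at the simplicial level and then propagate this compatibility through geometric realization, the plus-construction, and the definition of the $H$-group sum.

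First I would verify the strict identity $\theta \circ \oplus = \oplus \circ (\theta \times \theta)$ of simplicial maps $R(A) \times R(A) \to BGL(A)$. For $\sigma, \tau \in R(A)_n$ and $i \in \{0, \ldots, n-1\}$ a direct computation gives
\[
(\sigma \oplus \tau)(\F{i}) \cdot (\sigma \oplus \tau)(\F{i+1})^{-1}
= \big( \sigma(\F{i}) \sigma(\F{i+1})^{-1} \big) \oplus \big( \tau(\F{i}) \tau(\F{i+1})^{-1} \big),
\]
so the $i$-th slot of $\theta(\sigma \oplus \tau)$ agrees with that of $\theta(\sigma) \oplus \theta(\tau)$. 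Applying $|\cdot|^+$ functorially then produces a strictly commuting square relating $\oplus^+ : |R(A) \times R(A)|^+ \to |R(A)|^+$ with the analogous map for $BGL(A)$ through the maps $\theta^+$ and $(\theta \times \theta)^+$.

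Next I would match this with the $H$-group sums. By construction $+ = \oplus^+ \circ k^{-1}$ on $|R(A)|^+$ and $+ = \oplus^+ \circ k_B^{-1}$ on $BGL(A)^+$, where $k : |R(A) \times R(A)|^+ \to |R(A)|^+ \times |R(A)|^+$ and $k_B : |BGL(A) \times BGL(A)|^+ \to BGL(A)^+ \times BGL(A)^+$ are the canonical projections, both of which are homotopy equivalences. Naturality of these projections provides a strictly commuting square involving $k$, $k_B$, $(\theta \times \theta)^+$ and $\theta^+ \times \theta^+$; reversing the horizontal arrows and invoking the uniqueness of homotopy inverses up to homotopy yields the corresponding square commuting up to homotopy.

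Pasting the two squares together produces a homotopy $\theta^+ \circ + \simeq + \circ (\theta^+ \times \theta^+)$, which is precisely the $H$-map property. The only subtle point is the passage from the strict naturality of the projections $k$ and $k_B$ to a mere homotopy naturality of their inverses $k^{-1}$ and $k_B^{-1}$, but this is exactly the level of coherence that the definition of an $H$-map requires, so no further refinement is needed.
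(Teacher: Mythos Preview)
Your proposal is correct and follows the same approach as the paper: the paper's proof consists of the single sentence that one should check $\theta \circ \oplus = \oplus \circ (\theta \times \theta)$ as simplicial maps $R(A) \times R(A) \to BGL(A)$, which is exactly your first step. You have simply made explicit the routine passage from this simplicial identity through $|\cdot|^+$ and the homotopy inverses $k^{-1}$, $k_B^{-1}$ to the $H$-map condition, details the paper leaves to the reader.
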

\begin{proof}
This is essentially a matter of checking that the simplicial maps given by 
\[
\theta \circ \oplus \, \T{ and } \,  \oplus \circ (\theta \times \theta) : 
R(A) \times R(A) \to BGL(A)
\]
coincide. 
\end{proof}

\subsection{Construction of the product in relative $K$-theory}\label{conprod} 
In this section we show that a pointwise version of the exterior Loday
product, determines a multiplicative structure on the relative $K$-groups. The
exposition will follow \cite[Section $2.1$]{Loday} closely. 

Let $A$ and $B$ be unital Banach algebras. 

Let $p,q \in \{3,4,\ldots\}$ be fixed and let $\varphi : A^p \otimes_\zz B^q
\to (A \otimes_\zz B)^{pq}$ denote some isomorphism of $A \otimes_\zz
B$-bimodules. As in Section \ref{extshuff} we have a corresponding map of
simplicial sets  
\[
\govar : R_p(A) \times R_q(B) \to R_{pq}(A \grad B) 
\]
We let $\govar^+ : |R_p(A) \times R_q(B)|^+ \to |R_{pq}(A \grad B)|$ denote
the induced map between the plus-constructions. Furthermore, let 
\[
k^{-1} : |R_p(A)|^+ \times |R_q(B)|^+ \to |R_p(A) \times R_q(B)|^+
\]
denote some homotopy inverse to the map given by the projection onto each
factor. We then define the tensor product 
\[
\grad^+ = \iota^+ \circ \govar^+ \circ k^{-1} :
|R_p(A)|^+ \times |R_q(B)|^+ \to |R(A \grad B)|^+ 
\]
Here $\iota^+ : |R_{pq}(A \grad B)|^+ \to |R(A \grad B)|^+$ is determined by
functoriality from the inclusion $\iota : R_{pq}(A \grad B) \to R(A \grad B)$. 

Following the argumentation of Theorem \ref{hgroup} we see that the tensor
product thus defined is natural in $A$ and $B$, bilinear, associative and
commutative up to homotopy. Furthermore, it only depends on the choice of
isomorphism $\varphi : A^p \otimes_\zz B^q \to (A \otimes_\zz B)^{pq}$ of $(A
\otimes_\zz B)$-bimodules up to homotopy. See also \cite[Section
$2.1.2$]{Loday}. 

Now, in order to get a map which descends to the smash product and which
behaves well when $p$ and $q$ tend to infinity we define 
\[
\begin{split}
& \gamma_{p,q}^{\T{rel}} : |R_p(A)|^+ \times |R_q(B)|^+ \to |R(A \grad B)|^+   \\
& \qqq  
\gamma_{p,q}^{\T{rel}} : (x,y) \mapsto x \grad^+ y - x \grad^+ 1_q - 1_p
\grad^+ y + 1_p \grad^+ 1_q   
\end{split}
\]
Here the minus sign comes from the (commutative) $H$-group structure on $|R(A
\grad B)|^+$ defined in Section \ref{hstruct}. The elements $1_p \in
|R_p(A)|^+$ and $1_q \in |R_q(B)|^+$ are given by the constant maps $1_p :
\Delta^n \to GL_p(A)$ and $1_q : \Delta^n \to GL_q(B)$.  

It is then immediate that the restriction $\gamma_{p,q}^{\T{rel}} : |R_p(A)|^+
\vee |R_q(B)|^+ \to |R(A \grad B)|^+$ is homotopically trivial. Since $|R(A
\grad B)|^+$ is an $H$-group we thus get a map on the smash product 
\[
\widehat{\gamma}_{p,q}^{\T{rel}} : |R_p(A)|^+ \wedge |R_q(B)|^+ \to |R(A \grad B)|^+
\]
which is unique up to homotopy and which makes the maps 
\[
\widehat{\gamma}_{p,q}^{\T{rel}} \circ \pi \, \T{ and } \,
\gamma_{p,q}^{\T{rel}} : |R_p(A)|^+ \times |R_q(B)|^+ \to |R(A \grad B)|^+ 
\]
homotopic. Here $\pi : |R_p(A)|^+ \times |R_q(B)|^+ \to |R_p(A)|^+ \wedge
|R_q(B)|^+$ denotes the quotient map.  

The argumentation explicited in \cite[p.333-335]{Loday} now ensures the
existence of a continuous map 
\[
\widehat{\gamma}^{\T{rel}} : |R(A)|^+ \wedge |R(B)|^+ \to |R(A \grad B)|^+
\]
which is natural in $A$ and $B$, bilinear, associative and commutative up to
weak homotopies. Furthermore, for any $p,q \in \{3,4,\ldots\}$ the maps
\[
\widehat{\gamma}_{p,q}^{\T{rel}} 
\, \T{ and } \, \widehat{\gamma}^{\T{rel}} \circ ( \iota \wedge \iota ) 
: |R_p(A)|^+ \wedge |R_q(B)|^+ \to |R(A \grad B)|^+ 
\]
agree up to weak homotopy. Here $\iota \wedge \iota : |R_p(A)|^+ \wedge
|R_q(B)|^+ \to |R(A)|^+ \wedge |R(B)|^+$ denotes the inclusion. This enables
us to make the following definition. 

\begin{definition} 
By the \emph{exterior product in relative $K$-theory} we understand the map 
\[
*^{\E{rel}} : K_n^{\E{rel}}(A) \times K_m^{\E{rel}}(B) 
\to K_{n+m}^{\E{rel}}(A \grad B)
\]
given by the formula 
\[
[f]*^{\E{rel}} [g] = [\widehat{\gamma}^{\E{rel}}\circ (f \wedge g)] 
\]
for each $[f] \in \pi_n(|R(A)|^+)$ and $[g] \in \pi_m(|R(B)|^+)$. 
\end{definition}

The naturality, bilinearity and associativity up to weak homotopies of the map
$\widehat{\gamma}^{\T{rel}} : |R(A)|^+ \wedge |R(B)|^+ \to |R(A \grad B)|^+$
imply the corresponding properties for the exterior product. 

\begin{prop}
The exterior product in relative $K$-theory 
\[
*^{\E{rel}} : K_n^{\E{rel}}(A) \times K_m^{\E{rel}}(B) 
\to K_{n+m}^{\E{rel}}(A \grad B)
\]
is natural, bilinear and associative.  
\end{prop}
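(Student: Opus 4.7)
The plan is to deduce each of the three properties directly from the corresponding property of the map $\widehat{\gamma}^{\T{rel}} : |R(A)|^+ \wedge |R(B)|^+ \to |R(A \grad B)|^+$ established just before the definition; essentially nothing more is needed beyond chasing the definition $[f] *^{\T{rel}} [g] = [\widehat{\gamma}^{\T{rel}} \circ (f \wedge g)]$ through the homotopy category of pointed spaces. Throughout, I will use that a class in $K_n^{\T{rel}}(A) = \pi_n(|R(A)|^+)$ is represented by a based map $f : S^n \to |R(A)|^+$, and that any weak homotopy between two based maps $|R(A)|^+ \wedge |R(B)|^+ \to |R(A \grad B)|^+$ becomes a genuine homotopy after precomposing with a map from the compact CW-complex $S^{n+m} \cong S^n \wedge S^m$.

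For \emph{naturality}, given unital continuous homomorphisms $\phi : A \to A'$ and $\psi : B \to B'$, functoriality of $R(-)$, of the geometric realization, and of the plus-construction produces induced maps $\phi_* : |R(A)|^+ \to |R(A')|^+$ and $\psi_* : |R(B)|^+ \to |R(B')|^+$, as well as $(\phi \grad \psi)_* : |R(A \grad B)|^+ \to |R(A' \grad B')|^+$. The naturality of $\widehat{\gamma}^{\T{rel}}$ in $A$ and $B$ gives a weak homotopy
\[
(\phi \grad \psi)_* \circ \widehat{\gamma}^{\T{rel}}_{A,B} \; \simeq \; \widehat{\gamma}^{\T{rel}}_{A',B'} \circ (\phi_* \wedge \psi_*),
\]
which upon precomposition with $f \wedge g$ yields the equality $(\phi \grad \psi)_*([f] *^{\T{rel}} [g]) = \phi_*[f] *^{\T{rel}} \psi_*[g]$ in $K_{n+m}^{\T{rel}}(A' \grad B')$.

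For \emph{bilinearity}, the $H$-group structure on $|R(A \grad B)|^+$ supplied by Proposition \ref{hgroup} endows $K_{n+m}^{\T{rel}}(A \grad B)$ with its usual abelian group structure. The bilinearity of $\widehat{\gamma}^{\T{rel}}$ up to weak homotopy means precisely that the composites $\widehat{\gamma}^{\T{rel}} \circ ((f_1 + f_2) \wedge g)$ and $\widehat{\gamma}^{\T{rel}} \circ (f_1 \wedge g) + \widehat{\gamma}^{\T{rel}} \circ (f_2 \wedge g)$ are weakly homotopic, and similarly in the second variable; restricting along $S^{n+m} \to |R(A)|^+ \wedge |R(B)|^+$ produces genuine homotopies of representatives, giving bilinearity on the level of $K$-theory. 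For \emph{associativity}, one compares the two iterated smash composites obtained by forming the triple product in the two possible orders. Both are represented by maps $S^n \wedge S^m \wedge S^\ell \to |R(A \grad B \grad C)|^+$ built out of two instances of $\widehat{\gamma}^{\T{rel}}$ (combined with the standard associativity homeomorphism of the smash product), and the associativity up to weak homotopy of $\widehat{\gamma}^{\T{rel}}$ transports into a homotopy between these representatives.

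The only point that requires care is the passage from a \emph{weak} homotopy of maps between plus-constructions to an actual equality in $\pi_*$; this is where the argument would get stuck in general. However, the target of each relevant weak homotopy, after smashing with finitely many suspensions of the sphere and precomposing with a based map from $S^{n+m}$ (or $S^{n+m+\ell}$), reduces to a question about maps out of a compact CW-complex, for which weak homotopy implies homotopy. Hence the three properties claimed in the proposition follow with no further combinatorial input.
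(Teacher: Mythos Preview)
Your proposal is correct and follows essentially the same route as the paper: the paper's entire argument is the single sentence preceding the proposition, namely that naturality, bilinearity and associativity of $\widehat{\gamma}^{\T{rel}}$ up to weak homotopy imply the corresponding properties of the exterior product. You have simply unpacked this sentence, making explicit how weak homotopies between maps of plus-constructions descend to equalities in $\pi_*$ after precomposing with maps from spheres; no further idea is needed, and your expanded version is faithful to (and more informative than) the paper's treatment.
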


In the case where the unital Banach algebra $A$ is commutative we get an
interior product 
\[
*^{\T{rel}} : K_n^{\T{rel}}(A) \times K_m^{\T{rel}}(A) 
\to K_{n+m}^{\T{rel}}(A)
\]
given by composition of the exterior product with the map induced by the continuous
algebra homomorphism $\nabla : A \grad A \to A \, \,$, $\, a_1 \otimes a_2 \mapsto a_1
\cdot a_2$. We are thus able to equip the direct sum of relative $K$-groups
$\oplus_{n=1}^\infty K_n^{\T{rel}}(A)$ with the structure of a graded
commutative ring. See also \cite[Theorem $2.1.12$]{Loday}. 

\subsection{Relations with the Loday product} 
Our task is now to compare the exterior product of Loday in algebraic
$K$-theory with the exterior product in relative $K$-theory. 

Let $A$ and $B$ be unital Banach algebras. Recall that the Loday product 
\[
* : K_n(A) \times K_m(B) \to K_{n+m}(A \otimes_\zz B)
\]
is uniquely determined by the continuous maps 
\[
\begin{split}
& \gamma_{p,q} : BGL_p(A)^+ \times BGL_q(B)^+ \to BGL(A \otimes_\zz B)^+  \\ 
& \qqq (x,y) \mapsto x \otimes^+ y - 1_p \otimes^+ y - x \otimes^+ 1_q 
+ 1_p \otimes^+ 1_q 
\end{split} 
\]
Here the tensorproduct $\otimes^+ : BGL_p(A)^+ \times
BGL_q(B)^+ \to BGL(A \otimes_\zz B)^+$ is induced by the group homomorphism 
\[
\ovar : GL_p(A) \times GL_q(B) \to GL_{pq}(A \otimes_\zz B) \subseteq GL(A
\otimes_\zz B) 
\]
associated with an isomorphism $\varphi : A^p \otimes_\zz B^q \to (A
\otimes_\zz B)^{pq}$ of $A \otimes_\zz B$-bimodules. The additive compositions
come from the (commutative) $H$-group structure on $BGL(A \otimes_\zz B)^+$,
\cite{Loday}. 

\begin{definition}
By the \emph{completed Loday product} in algebraic $K$-theory we will
understand the composition 
\[
\hat{*} = \iota_* \circ * : K_n(A) \times K_m(B) \to K_{n+m}(A \grad B) 
\]
of the Loday product and the map induced by the "identity" ring homomorphism
$\iota : A \otimes_\zz B \to A \grad B$ 
\end{definition}

We can then show that the homomorphism $\theta : K_n^{\T{rel}}(A) \to K_n(A)$
respects the exterior product structures. 

\begin{prop}\label{thetprod}
For each $x \in K_n^{\E{rel}}(A)$ and each $y \in K_m^{\E{rel}}(B)$ we have
the equality 
\[
\theta(x *^{\E{rel}} y) = \theta(x)\, \hat{*}\, \theta(y) 
\]
in $K_{n+m}(A \grad B)$. In particular, the map $\theta : \oplus_{n \geq 1}
K_n^{\E{rel}}(A) \to \oplus_{n \geq 1} K_n(A)$ is a homomorphism of graded
commutative rings whenever $A$ is a commutative, unital Banach algebra. 
\end{prop}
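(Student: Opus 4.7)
The plan is to check the compatibility of the two products at the pre-plus-construction level, where everything is controlled by explicit simplicial maps, and then push the identity through the plus construction, $H$-group structure, and smash-product descent.

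The key observation is the simplicial identity
\[
\theta(\sigma \govar \tau) \;=\; \theta(\sigma) \govar \theta(\tau),
\]
valid for $\sigma \in R_p(A)_n$ and $\tau \in R_q(B)_n$, where on the right the operation $\govar$ denotes the group homomorphism $GL_p(A) \times GL_q(B) \to GL_{pq}(A \grad B)$ applied componentwise to the $(n-1)$-tuples produced by $\theta$. This is a direct consequence of the fact that $\govar$ is a group homomorphism (it intertwines pointwise products and inversion), so it commutes with the construction $\sigma \mapsto (\sigma(\F{i-1})\sigma(\F{i})^{-1})_{i}$. In other words, the square with vertical arrows $\theta$ and horizontal arrows $\govar$ commutes on the nose.

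Applying geometric realization and the plus construction, this turns the simplicial identity into the homotopy commutativity of
\[
\theta^+ \circ \govar^+ \;\simeq\; \govar^+ \circ (\theta^+ \times \theta^+)
\colon |R_p(A)|^+ \times |R_q(B)|^+ \to BGL_{pq}(A \grad B)^+.
\]
By Proposition \ref{resph}, the map $\theta^+$ is an $H$-map, so it preserves the additive corrections $x \grad^+ y - x \grad^+ 1_q - 1_p \grad^+ y + 1_p \grad^+ 1_q$ used in the definition of $\gamma_{p,q}^{\T{rel}}$, and consequently we obtain
\[
\theta^+ \circ \gamma_{p,q}^{\T{rel}} \;\simeq\; \gamma_{p,q} \circ (\theta^+ \times \theta^+),
\]
where on the right we use the composition of the Loday product map with $\iota$, i.e.\ the completed Loday product. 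Since both sides vanish on the wedge $|R_p(A)|^+ \vee |R_q(B)|^+$, the identity descends to the smash product $|R_p(A)|^+ \wedge |R_q(B)|^+$, giving
\[
\theta^+ \circ \widehat{\gamma}_{p,q}^{\T{rel}} \;\simeq\; \widehat{\gamma}_{p,q} \circ (\theta^+ \wedge \theta^+).
\]

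Finally I would pass to the direct limit over $p,q \in \{3,4,\ldots\}$, using that both $\widehat{\gamma}^{\T{rel}}$ and $\widehat{\gamma}$ are characterized up to weak homotopy by compatibility with their finite-rank versions (as recalled after the definition of $\widehat{\gamma}^{\T{rel}}$ and in \cite[p.\,333--335]{Loday}). Evaluating on a representative $f \wedge g$ with $[f] \in \pi_n(|R(A)|^+)$ and $[g] \in \pi_m(|R(B)|^+)$ then yields $\theta(x *^{\T{rel}} y) = \theta(x)\,\hat{*}\,\theta(y)$. The main obstacle is really bookkeeping: keeping track that the homotopy inverses $k^{-1}$ used in defining the two products are compatible under $\theta$, and that the reduction from $\times$ to $\wedge$ passes cleanly through $\theta^+$; both points reduce to the fact that $\theta^+$ is an $H$-map, so once Proposition \ref{resph} is invoked the argument is formal. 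The commutative-ring statement is then immediate by composing with the multiplication $\nabla : A \grad A \to A$.
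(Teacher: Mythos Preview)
Your proposal is correct and follows essentially the same route as the paper: both start from the simplicial identity $\theta(\sigma \govar \tau) = \iota(\theta(\sigma)\ovar\theta(\tau))$, invoke Proposition~\ref{resph} to pass through the $H$-group corrections and obtain $\theta^+\circ\gamma_{p,q}^{\T{rel}}\simeq \iota^+\circ\gamma_{p,q}\circ(\theta^+\times\theta^+)$, and then appeal to the uniqueness of the smash-product and direct-limit constructions. You are somewhat more explicit than the paper about the smash descent and the passage to the limit, but the argument is the same.
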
 
\begin{proof} Let $p,q \in \{3,4,\ldots\}$ and let $\varphi : A^p \otimes_\zz
  B^q \to (A \otimes_\zz B)^{pq}$ denote some isomorphism of $(A \otimes_\zz
  B)$-bimodules. On the level of simplicial sets we then have the equality 
\[
\arr{ccc}{
\theta(\sigma \govar \tau) = 
\iota ( \theta(\sigma) \ovar \theta(\tau) )
\sep{for all} 
\sigma \in R_p(A)_n \, , \, \tau \in R_q(B)_n 
}
\]
This shows that the maps 
\[
\theta \circ \grad^+  
\, \T{ and } \, 
\iota^+ \circ \otimes^+ \circ (\theta \times \theta) 
: |R_p(A)|^+ \times |R_q(B)|^+ \to BGL(A \grad B)^+
\]
are homotopic. By Theorem \ref{resph} the map $\theta : |R(A \grad B)|^+ \to
BGL(A \grad B)^+$ respects the $H$-group structures up to homotopy so the maps 
\[ 
\theta \circ \gamma^{\T{rel}}_{p,q} 
\, \T{ and } \, 
\iota^+ \circ \gamma_{p,q} \circ (\theta \times \theta) 
: |R_p(A)|^+ \times |R_q(B)|^+ \to BGL(A \grad B)^+
\] 
are homotopic. The desired result now follows by uniqueness of the involved
constructions. 
\end{proof}

%%%%%%%%%%%%%%%%%%%%%%%%%%%%%%%%%%%%%%%%%%8<%%%%%%%%%%%%%%%%%%%%%%%%%%%%%%%%%%%%%%

%%%%%%%%%%%%%%%%%%%%%%%%%%%%%%%%%%%%%%%%%%8<%%%%%%%%%%%%%%%%%%%%%%%%%%%%%%%%%%%%%%

%%%%%%%%%%%%%%%%%%%%%%%%%%%%%%%%%%%%%%%%%%8<%%%%%%%%%%%%%%%%%%%%%%%%%%%%%%%%%%%%%%

%%%%%%%%%%%%%%%%%%%%%%%%%%%%%%%%%%%%%%%%%%8<%%%%%%%%%%%%%%%%%%%%%%%%%%%%%%%%%%%%%%

\section{On the multiplicative properties of the relative Chern
  character}\label{mulrel} 
Let $A$ be a unital Banach algebra. Let us start by recalling the construction
of the relative Chern character as introduced by A. Connes and M. Karoubi,
\cite{ConKar,Karoubi}. By definition, the relative Chern character is obtained
as the composition of four maps 
\[
\arr{ccc}{
\T{ch}^{\T{rel}} : K_n^{\T{rel}}(A) \to HC_{n-1}(A) 
& \q & \T{ch}^{\T{rel}} = \T{TR} \circ \epsi \circ L \circ h_n
}
\]
We will give a brief description of each of the maps. 

The first map is the Hurewicz homomorphism associated with the pointed
topological space $|R(A)|^+$, 
\[
h_n : K_n^{\T{rel}}(A) = \pi_n(|R(A)|^+) \to H_n(|R(A)|^+) \cong H_n(R(A))
\]

The second map is the logarithm 
\[
L : H_n(R(A)) \cong H_n(R^\infty(A)) \to
\lim_{p \to \infty} \hlie{M_p(A)}{n} 
\]
which is given by the chain map 
\begin{equation}\label{eq:logarithm}
L : \sigma \mapsto \int_{\Delta^n} 
\diff{\sigma}{t_1}\cdot \sigma^{-1} 
\wlw \diff{\sigma}{t_n}\cdot \sigma^{-1} 
dt_1 \ldots dt_n 
\end{equation} 
Here $\sigma : \Delta^n \to GL_p(A)$ is a smooth function. See
\cite{Tillmann}. The isomorphism $H_n(R(A)) \cong H_n(R^\infty(A))$ was proved
in Lemma \ref{consmooth}. Note that we are working with the continuous Lie
algebra complex $(\Lambda_*(A),\delta)$, thus in each degree we have a Banach
space, $\Lambda_n(A)$, in the appropriate quotient norm. This is needed in
order for the above integral to make sense. See also Section \ref{extlie}. 

The third map is the antisymmetrization 
\[
\epsi : \lim_{p \to \infty} \hlie{M_p(A)}{n} 
\to \lim_{p \to \infty} HC_{n-1}(M_p(A)) 
\]
which is given by the continuous map 
\[
\epsi : x_0 \wedge x_1 \wlw x_{n-1} \mapsto 
\sum_{s \in \Sigma_{n-1}} \T{sgn}(s) x_0 \otimes x_{s(1)} \olo
x_{s(n-1)} 
\]
Again, we are working with the continuous cyclic complex $(\ccyc{A}{n},b)$,
thus in each degree we have a Banach space, $\ccyc{A}{n}$, in the appropriate
quotient norm. See \cite{Loday2,LodQuill}. 

The last map is the generalized trace on continuous cyclic homology 
\[
\T{TR} : \lim_{p\to \infty} HC_{n-1}(M_p(A)) \to HC_{n-1}(A)
\]
See \cite{Loday2} for example. 

By \cite[Theorem $3.7$]{ConKar} the relative Chern character fits in the
(up to constants) commutative diagram 
\[
 \dgARROWLENGTH=0.5\dgARROWLENGTH
\begin{diagram}
\node{\q \ldots } \arrow{e,b}{i}    
     \node{K_{n+1}^{\T{top}}(A)} \arrow{e,t}{v} \arrow{s,l}{\T{ch}^{\T{top}}_{n+1}} 
          \node{K_n^{\T{rel}}(A)} \arrow{e,t}{\theta} \arrow{s,l}{\T{ch}_n^{\T{rel}}}
               \node{K_n(A)} \arrow{e,t}{i} \arrow{s,l}{D_n}
                    \node{K_n^{\T{top}}(A)} \arrow{e,t}{v} \arrow{s,l}{\T{ch}^{\T{top}}_n}
                         \node{\ldots \q}   \\
\node{\q \ldots} \arrow{e,b}{I}
     \node{HC_{n+1}(A)} \arrow{e,b}{S} 
          \node{HC_{n-1}(A)} \arrow{e,b}{B}
               \node{HH_n(A)} \arrow{e,b}{I}
                    \node{HC_n(A)} \arrow{e,b}{S}
                         \node{\ldots \q} 
\end{diagram}
\]
Here the other columns are the Dennis trace and the topological Chern
character. The bottom row is the $SBI$-sequence in continous
homology. Remark that the relative Chern character defined in this section
differs from the one given in \cite{ConKar, Karoubi} by the constant
$(-1)^n(n-1)!$ on $K_n^{\T{rel}}(A)$. This is necessary to make the map
respect the product structures. 

\subsection{The multiplicative properties of the logarithm}\label{mullog}
Let $A$ and $B$ be unital Banach algebras. In this section we will show that
the logarithm $L : H_*(R_p(A)) \to \hlie{M_p(A)}{*}$ respects the product
structures on the homology of the simplicial sets $R_p(A)$ and the Lie algebra
homology of the Banach algebras $M_p(A)$. These exterior products were
introduced in Section \ref{extshuff} and Section \ref{extlie}. 

For each $n,p \in \nn$ and each $j \in \{1,\ldots,n\}$ we define the operator 
\[
\arr{ccc}{
\Gamma_j : C^\infty(\Delta^n,GL_p(A)) \to C^\infty(\Delta^n,M_p(A)) 
& \q & \Gamma_j : \sigma \mapsto \diff{\sigma}{t_j}\cdot \sigma^{-1} 
}
\]
Furthermore we define the wedge product 
\[
\arr{ccc}{
\gamma : C^\infty(\Delta^n,GL_p(A)) \to C^\infty(\Delta^n, \Lambda_n(M_p(A)))
& \q & \gamma(\sigma)(t) = \Gamma_1(\sigma)(t) \wlw \Gamma_n(\sigma)(t)
}
\]
Our first task is then to understand the behaviour of $\gamma$ with respect
to the exterior shuffle product. This is the content of Lemma
\ref{techgamma}. However we will start by introducing some convenient
notation. 

Let us fix to smooth maps $\sigma : \Delta^n \to GL_p(A)$ and $\tau : \Delta^m
\to GL_q(B)$ and let us choose an isomorphism $\varphi : A^p \otimes_\zz B^q
\to (A \otimes_\zz B)^{pq}$ of $(A \otimes_\zz B)$-bimodules. Furthermore we
let $(\mu,\nu) \in \Sigma_{(n,m)}$ be a fixed $(n,m)$-shuffle. To ease the
exposition we will assume that $\mu(0)=0$ and that $\nu(m-1)=n+m-1$.

Let $\{A_0,A_1,\ldots,A_{2k+1}\}$ denote the unique partition of
$\{0,1,\ldots,n+m-1\}$ satisfying the conditions  
\[
\arr{ccc}{
\bigcup_{i=0}^k A_{2i} = \T{Im}(\mu) \, , \q 
\bigcup_{i=0}^k A_{2i+1} = \T{Im}(\nu) \sep{and}
i < j \Rightarrow 
(x < y \q \forall  x\in A_i \, , \, y \in A_j)
}
\]
Let $k_i$ denote the smallest element in $A_i$ and let $k_{2k+2}=n+m$. We
associate the composition of degeneracies $s_{A_i}= s_{k_{i+1} - 1} \ldots
s_{k_i}$ to each set $A_i$ in the partition. We then have the equality 
\[
s_\nu(\sigma) \govar s_\mu(\tau) 
= s_{A_{2k+1}}\ldots s_{A_3}s_{A_1}(\sigma)
\govar s_{A_{2k}} \ldots s_{A_2}s_{A_0} (\tau) 
: \Delta^{n+m} \to GL_{pq}(A \grad B)  
\]

For each $j \in \{0,\ldots,k\}$ we let  
\[
\arr{ccc}{
E_j = \sum_{i=0}^j |A_{2i}| = \sum_{i=0}^j (k_{2i+1}-k_{2i})
\sep{and}
O_j = \sum_{i=0}^j |A_{2i+1}| = \sum_{i=0}^j (k_{2i+2}-k_{2i+1}) 
}
\]
We then define the smooth maps   
\[
\arr{ccc}{
\omega_{2j} : \Delta^{n+m} \to \clie{M_{pq}(A \grad B)}{|A_{2j}|} 
\sep{and}
\omega_{2j+1} : \Delta^{n+m} \to \clie{M_{pq}(A \grad B)}{|A_{2j+1}|} 
}
\]
by the wedge products 
\[
\begin{split}
\omega_{2j} &= s_\nu(\Gamma_{E_{j-1}+1}(\sigma \govar 1_q) 
\wlw \Gamma_{E_j}(\sigma  \govar 1_q ) ) \q \q \T{and} \\
\omega_{2j+1} &= s_\mu(\Gamma_{O_{j-1}+1}(1_p \govar \tau)
\wlw \Gamma_{O_j}(1_p \govar \tau) )
\end{split}
\]

Finally, let us recall the relations between the degeneracies and the partial
differential operators, 
\begin{equation}\label{eq:degdiff}
\arr{ccc}{
\diff{}{t_j}\circ s_i = 
\fork{ccc}{ 
s_i \circ \diff{}{t_{j-1}}  \sep{for} j>i>0 \\
s_i \circ \diff{}{t_j} \sep{for} j\leq i  
} & \q &
\diff{}{t_j} \circ s_0 = 
\fork{ccc}{
s_0 \circ \diff{}{t_{j-1}} \sep{for} j > 1 \\
0 \sep{for} j = 1
}
}
\end{equation}

We can then prove the following technical result, 

\begin{lemma}\label{techgamma} 
Let $\sigma : \Delta^n \to GL_p(A)$ and $\tau : \Delta^m \to GL_q(B)$ be a
pair of smooth maps. For each $(n,m)$-shuffle $(\mu,\nu) \in \Sigma_{(n,m)}$
we then have the equality 
\[
\gamma(s_\nu(\sigma) \govar s_\mu(\tau)) 
= \E{sgn}(\mu,\nu) 
s_\nu(\gamma(\sigma \govar 1_q ) )
\wedge s_\mu(\gamma(1_p \govar \tau))
\]
between smooth maps $\Delta^{n+m} \to \clie{M_{pq}(A \grad B)}{n+m}$
\end{lemma}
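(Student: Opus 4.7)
The strategy is to exploit the bilinear structure of $\hat\otimes_\varphi$ together with the degeneracy--derivative relations \eqref{eq:degdiff}. Since $\hat\otimes_\varphi: GL_p(A)\times GL_q(B)\to GL_{pq}(A\hat\otimes B)$ is a group homomorphism, I first decompose
\[
s_\nu(\sigma) \govar s_\mu(\tau)
= \bigl(s_\nu(\sigma)\govar 1_q\bigr)\cdot\bigl(1_p \govar s_\mu(\tau)\bigr)
= s_\nu(\sigma\govar 1_q)\cdot s_\mu(1_p\govar \tau),
\]
where the two factors take values in the mutually commuting subalgebras $M_p(A)\grad 1 \subseteq M_{pq}(A\grad B)\supseteq 1\grad M_q(B)$. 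The same block structure shows that $\partial_j$ of one factor commutes with the other factor, so the logarithmic derivative is \emph{additive}:
\[
\Gamma_j\bigl(s_\nu(\sigma)\govar s_\mu(\tau)\bigr)
= \Gamma_j\bigl(s_\nu(\sigma\govar 1_q)\bigr) + \Gamma_j\bigl(s_\mu(1_p\govar \tau)\bigr)
=: \Gamma_j^\sigma + \Gamma_j^\tau.
\]

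Writing $\gamma(s_\nu(\sigma)\govar s_\mu(\tau))=\bigwedge_{j=1}^{n+m}(\Gamma_j^\sigma+\Gamma_j^\tau)$ and expanding, each summand is indexed by a function $\epsi:\{1,\dots,n+m\}\to\{\sigma,\tau\}$. Using \eqref{eq:degdiff} together with the partition $\{A_0,\dots,A_{2k+1}\}$, one sees that within each even block $A_{2i}\subseteq\T{Im}(\mu)$ all the vectors $\Gamma_j^\tau$ ($j\in A_{2i}+1$, appropriately indexed) are either zero (when $0\in A_{2i}$) or pairwise equal, and symmetrically within each odd block $A_{2i+1}\subseteq\T{Im}(\nu)$ all the vectors $\Gamma_j^\sigma$ collapse. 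Consequently any choice $\epsi$ which assigns $\tau$ to two distinct elements of a block $A_{2i}$, or $\sigma$ to two distinct elements of a block $A_{2i+1}$, makes the wedge vanish. Combined with the pigeonhole count $\sum|A_{2i}|+\sum|A_{2i+1}|=n+m$, this leaves a unique surviving assignment: $\epsi(j)=\sigma$ exactly for those $j$ whose index sits in a block $A_{2i+1}\subseteq\T{Im}(\nu)$, and $\epsi(j)=\tau$ for those coming from a block in $\T{Im}(\mu)$.

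Having isolated the surviving term, the final task is to identify it with $s_\nu(\gamma(\sigma\govar 1_q))\wedge s_\mu(\gamma(1_p\govar \tau))$ up to sign. This follows because each surviving $\Gamma_j^\sigma$ unfolds via \eqref{eq:degdiff} to $s_\nu$ applied to a factor $\Gamma_{j'}(\sigma\govar 1_q)$ of $\gamma(\sigma\govar 1_q)$, and analogously for the $\Gamma_j^\tau$'s, so the content of the wedge is correct block by block. The sign comes from reordering the wedge so that all $\sigma$-factors precede all $\tau$-factors: this permutation is exactly the interleaving dictated by the shuffle $(\mu,\nu)$, and therefore has sign $\T{sgn}(\mu,\nu)$.

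I expect the main obstacle to be the combinatorial bookkeeping in the middle paragraph -- pinning down precisely which of the $\Gamma_j^{\bullet}$'s coincide (or vanish) inside each block $A_i$, so that all but one assignment $\epsi$ collapses. Once \eqref{eq:degdiff} is applied block by block to $s_{A_{2k+1}}\cdots s_{A_1}$ and $s_{A_{2k}}\cdots s_{A_0}$, this reduces to the single-degeneracy observation $\partial_i(s_i f)=\partial_{i+1}(s_i f)$ and $\partial_1(s_0 f)=0$, together with the sign count of the shuffled reordering, both of which are elementary.
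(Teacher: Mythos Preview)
Your decomposition $\Gamma_j = \Gamma_j^\sigma + \Gamma_j^\tau$ via the commuting block structure is exactly the right starting point, and it is also what underlies the paper's proof. However, the cancellation argument you propose in the middle paragraph has a genuine gap, and the surviving assignment you name is the wrong one.

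First, the labeling: in the even blocks $A_{2i}\subseteq\T{Im}(\mu)$ the surviving factor is the $\sigma$-part, not the $\tau$-part (and dually for the odd blocks). You can see this already in the case $n=m=1$ with $\mu(0)=0$, $\nu(0)=1$: there $\Gamma_1^\tau=0$, forcing $\epsi(1)=\sigma$ for the index in the even block $A_0$. This is consistent with the paper's identification $\omega_{2j}=s_\nu\big(\Gamma_{E_{j-1}+1}(\sigma\govar 1_q)\wlw\Gamma_{E_j}(\sigma\govar 1_q)\big)$.

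Second, and more seriously, the \emph{within-block} collapse you describe does not by itself single out a unique $\epsi$. Take $n=m=2$ and the shuffle with $A_0=\{0\}$, $A_1=\{1\}$, $A_2=\{2\}$, $A_3=\{3\}$. Every block has size one, so your condition ``no two $\tau$'s in the same even block, no two $\sigma$'s in the same odd block'' is vacuous, yet only one of the $2^4$ assignments survives. What actually kills the other terms is a \emph{cross-block} repetition: the constant value of $\Gamma_j^\sigma$ on an odd block $A_{2r+1}$ equals $s_\nu\big(\Gamma_{E_r}(\sigma\govar 1_q)\big)$, which is precisely $\Gamma_{k_{2r+1}}^\sigma$, the last $\sigma$-factor already appearing from the preceding even block $A_{2r}$; and symmetrically for $\Gamma_j^\tau$ on even blocks. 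This is the content of the paper's inductive step, where the extra summand $s_\mu(\Gamma_{O_{r-1}}(1_p\govar\tau))$ is discarded because it ``already appears in $\omega_{2r-1}$''. Your pigeonhole count $\sum|A_{2i}|+\sum|A_{2i+1}|=n+m$ is just the tautology $n+m=n+m$ and does not supply this information.

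Once you add the cross-block identity to your list of coincidences, your global-expansion argument and the paper's block-by-block induction become the same proof in two different packagings; the induction is simply the cleanest way to organize the successive cancellations.
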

\begin{proof}
We will assume that $\mu(0)=0$ and that $\nu(m-1)=n+m-1$. The other cases can
be proved using similar arguments. 

We start by noting that 
\[
\begin{split}
\omega_0 \wedge \omega_1 \wlw \omega_{2k+1} 
& = \T{sgn}(\mu,\nu) (\omega_0 \wedge \omega_2 \wlw \omega_{2k})
\wedge (\omega_1 \wedge \omega_3 \wlw \omega_{2k+1}) \\
& = \T{sgn}(\mu,\nu) s_\nu(\gamma(\sigma \govar 1_q)) 
\wedge s_\mu(\gamma(1_p \govar \tau)) 
\end{split}
\]
It is therefore sufficient to prove the identity
\[
\gamma(s_\nu(\sigma) \govar s_\mu(\tau))
= \omega_0 \wlw \omega_{2k+1} 
\]
We will use induction to show that   
\[
\Gamma_1(s_\nu(\sigma) \govar s_\mu(\tau)) 
\wlw \Gamma_{k_i}(s_\nu(\sigma) \govar s_\mu(\tau))  
= \omega_0 \wlw \omega_{i-1}
\]
for each $i \in \{1,\ldots,2k+2\}$. Thus, let $j \in \{1,\ldots, k_1\}$. By
the identities in \eqref{eq:degdiff} we get 
\[
\begin{split}
\diff{}{t_j}(s_\nu(\sigma)\govar s_\mu(\tau)) 
& = \diff{}{t_j}(s_\nu(\sigma)) \govar s_\mu(\tau) 
+ s_\nu(\sigma) \govar \diff{}{t_j}(s_\mu(\tau)) \\
& = s_\nu(\diff{\sigma}{t_j})\govar s_\mu(\tau)
\end{split}
\]
By consequence we have that 
\[
\Gamma_j(s_\nu(\sigma)\govar s_\mu(\tau)) 
= s_\nu(\Gamma_j(\sigma)) \govar 1_q 
= s_\nu(\Gamma_j(\sigma \govar 1_q))
\]
proving the induction start. 

Now, suppose that 
\[
\Gamma_1(s_\nu(\sigma) \govar s_\mu(\tau)) \wlw 
\Gamma_{k_i}(s_\nu(\sigma) \govar s_\mu(\tau))  
= \omega_0 \wlw \omega_{i-1}
\]
for some $i \in \{1,\ldots,2k+1\}$. We will only consider the case of $i=2r$
being even. The odd case can be proven by similar arguments. Thus, let $j \in
\{k_{2r}+1,\ldots,k_{2r+1}\}$. By the identities in \eqref{eq:degdiff} we get 
\[
\begin{split}
\diff{}{t_j}(s_\nu(\sigma)\govar s_\mu(\tau)) 
& = s_{A_{2k+1}} \ldots s_{A_{2r+1}} \diff{}{t_j}(s_{A_{2r-1}} \ldots
s_{A_1}(\sigma)) \govar s_\mu(\tau) \\
& \q + s_\nu(\sigma) \govar s_{A_{2k}} \ldots s_{A_{2r+2}} \diff{}{t_j}(s_{A_{2r}}
\ldots s_{A_0}(\tau)) \\
& = s_\nu(\diff{\sigma}{t_{j-O_{r-1}}}) \govar s_\mu(\tau)
+ s_\nu(\sigma) \govar s_\mu(\diff{\tau}{t_{k_{2r}-E_{r-1}}})
\end{split}
\]
Noting that $k_{2r}-E_{r-1} = O_{r-1}$ we deduce the identity 
\[
\Gamma_j(s_\nu(\sigma)\govar s_\mu(\tau))
= s_\nu(\Gamma_{j-O_{r-1}}(\sigma \govar 1_q)) 
+ s_\mu(\Gamma_{O_{r-1}}(1_p \govar \tau))
\]
But the term $s_\mu(\Gamma_{O_{r-1}}(1_p \govar \tau))$ already appears in the
wedge product 
\[
\omega_{2r-1}= s_\mu(\Gamma_{O_{r-2}+1}(1_p  \govar \tau) 
\wlw \Gamma_{O_{r-1}}(1_p  \govar \tau )) 
\]
Using the induction hypothesis we thus get that
\[
\begin{split}
& \Gamma_1(s_\nu(\sigma) \govar s_\mu(\tau)) \wlw 
\Gamma_{k_{2r+1}}(s_\nu(\sigma) \govar s_\mu(\tau)) \\
& \q = \big(\omega_0 \wlw \omega_{2r-1}\big)
\wedge s_\nu\big(\Gamma_{k_{2r}+1-O_{r-1}}(\sigma \govar 1_q)
\wlw \Gamma_{k_{2r+1}-O_{r-1}}(\sigma \govar 1_q)\big) \\
& \q = \omega_0 \wlw \omega_{2r}
\end{split}
\]
proving the induction step. 
\end{proof}

To continue further, we will need the following Lemma which can be proved by a
direct but tedious computation, 

\begin{lemma}\label{intsimp}
For any pair of continuous maps $\alpha : \Delta^n \to A$ and $\beta :
\Delta^m \to B$ we have the identity 
\[
\begin{split}
& \sum_{(\mu,\nu) \in \Sigma_{(n,m)}}
\int_{\Delta^{n+m}} s_\nu(\alpha) \otimes s_\mu(\beta) 
\, dt_1 \ldots dt_{n+m} \\ 
& \qq = 
(\int_{\Delta^n}\alpha\, dt_1 \ldots dt_n)
\otimes (\int_{\Delta^m}\beta\, dt_1 \ldots dt_m) 
\end{split}
\]
in the unital Banach algebra $A \grad B$. 
\end{lemma}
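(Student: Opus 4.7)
The plan is to interpret each summand as an integral over a piece of the product simplex $\Delta^n \times \Delta^m$, invoke the classical shuffle triangulation of $\Delta^n \times \Delta^m$ by $\binom{n+m}{n}$ subsimplices indexed by $(n,m)$-shuffles, and apply Fubini.

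For each $(\mu,\nu) \in \Sigma_{(n,m)}$ I would define an affine map $\phi_{\mu,\nu} : \Delta^{n+m} \to \Delta^n \times \Delta^m$ by collecting consecutive coordinates according to the shuffle pattern: writing $u_1,\ldots,u_{n+m}$ for the coordinates of $\Delta^{n+m}$, the projection onto $\Delta^n$ reads off certain partial sums of the $u_k$ indexed by the block structure determined by $\mu$, and the projection onto $\Delta^m$ does the symmetric thing with $\nu$. The first step of the proof is to verify the two identities
\[
(s_\nu \alpha)(u) = \alpha\bigl(\pi_1 \phi_{\mu,\nu}(u)\bigr) \qquad \text{and} \qquad (s_\mu \beta)(u) = \beta\bigl(\pi_2 \phi_{\mu,\nu}(u)\bigr).
\]
This follows by unwinding the explicit degeneracy formulas given in Section \ref{extrel}: the action of $s_j$ ($j \geq 1$) as the sum of two adjacent coordinates, and $s_0$ as the deletion of the first coordinate, translates precisely into the partial-sum description of $\phi_{\mu,\nu}$.

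The second step is geometric: one checks that $\phi_{\mu,\nu}$ is an affine bijection of $\Delta^{n+m}$ onto a top-dimensional closed subset $R_{\mu,\nu} \subseteq \Delta^n \times \Delta^m$ with unit Jacobian, and that the pieces $\{R_{\mu,\nu}\}_{(\mu,\nu) \in \Sigma_{(n,m)}}$ tile $\Delta^n \times \Delta^m$ with pairwise measure-zero intersections. This is the classical Eilenberg--Zilber triangulation of a product of simplices and can be invoked as a known fact. A change of variables then gives
\[
\int_{\Delta^{n+m}} s_\nu(\alpha)(u) \otimes s_\mu(\beta)(u) \, du_1 \ldots du_{n+m} = \int_{R_{\mu,\nu}} \alpha(s) \otimes \beta(t) \, ds\, dt,
\]
and summing over shuffles collapses the right hand side to $\int_{\Delta^n \times \Delta^m} \alpha(s) \otimes \beta(t)\, ds\, dt$. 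Fubini, valid for continuous $A\grad B$-valued integrands on a product of compact sets, yields the claimed factorization $(\int_{\Delta^n}\alpha) \grad (\int_{\Delta^m}\beta)$.

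The main obstacle is the bookkeeping in the first step: the composite $s_\nu = s_{\nu(m-1)} \cdots s_{\nu(0)}$ produces a nested sequence of partial sums whose pattern must be matched exactly against the affine description of $\phi_{\mu,\nu}$, and the same for $s_\mu$ on the $\beta$ factor. Induction on $n+m$, or equivalently on the number of composed degeneracies, is the most transparent way to reconcile the two descriptions; the base case $n=m=1$ (where the two shuffles reconstitute $[0,1]^2$ from two copies of $\Delta^2$ meeting along the diagonal) already illustrates the mechanism, and the inductive step simply appends one more summand or one more drop to the partial-sum recipe.
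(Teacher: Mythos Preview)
Your argument is correct. The paper does not actually prove this lemma; it only remarks that it ``can be proved by a direct but tedious computation''. Your approach via the Eilenberg--Zilber shuffle triangulation of $\Delta^n\times\Delta^m$ is the standard conceptual explanation of the identity and is more illuminating than a bare coordinate computation: once one knows that the affine maps $\phi_{\mu,\nu}=(s_\nu,s_\mu):\Delta^{n+m}\to\Delta^n\times\Delta^m$ have $|\det D\phi_{\mu,\nu}|=1$ and that their images tile the product up to measure zero, the lemma is immediate from change of variables and Fubini for Bochner integrals.

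Two small remarks. First, your phrase ``unit Jacobian'' should be read as $|\det|=1$; the sign can be $-1$ for some shuffles (already visible when $n=m=1$, where one of the two pieces has Jacobian matrix $\bigl(\begin{smallmatrix}0&1\\1&1\end{smallmatrix}\bigr)$), but this is irrelevant for the unsigned integral identity. Second, your description of $\phi_{\mu,\nu}$ as ``partial sums indexed by the block structure'' is slightly loose once $s_0$ is involved, since the two projections can share variables (again visible in the case $\phi(t_1,t_2)=(t_1+t_2,\,t_2)$); the clean way to verify that $(s_\nu,s_\mu)$ is the affine inclusion of the staircase simplex is to check it on vertices, where the composite degeneracy sends vertex $k$ of $\Delta^{n+m}$ to the $k^{\text{th}}$ lattice point on the staircase path in $\{0,\ldots,n\}\times\{0,\ldots,m\}$ determined by the shuffle. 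This avoids the inductive coordinate bookkeeping you flag as the main obstacle.
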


Let $\phi : M_p(A) \grad M_q(B) \to M_{pq}(A \grad B)$ denote the continuous
algebra homomorphism associated with the choice of the isomorphism $\varphi :
A^p \otimes_\zz B^q \to (A \otimes_\zz B)^{pq}$ of $(A \otimes_\zz
B)$-bimodules. We are now ready for the main result of this section,

\begin{prop}\label{logprod}
For each pair of smooth maps $\sigma : \Delta^n \to GL_p(A)$ and $\tau :
\Delta^m \to GL_q(B)$ we have the equality  
\[
L(\sigma \times_\varphi \tau) = \phi_*( L(\sigma) \wedge^E L(\tau) ) 
\]
in $\clie{M_{pq}(A \grad B)}{n+m}$. 
\end{prop}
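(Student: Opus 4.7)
The plan is to combine the two preceding lemmas in a fairly mechanical way. First I unwind the exterior shuffle product so that
\[
L(\sigma \times_\varphi \tau)
= \sum_{(\mu,\nu) \in \Sigma_{(n,m)}} \T{sgn}(\mu,\nu)\,
\int_{\Delta^{n+m}} \gamma\big(s_\nu(\sigma) \govar s_\mu(\tau)\big)\, dt_1 \ldots dt_{n+m},
\]
which is just the definition \eqref{eq:logarithm} of the logarithm applied term by term.

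Next I apply Lemma \ref{techgamma} to rewrite each integrand as
\[
\T{sgn}(\mu,\nu)\, s_\nu\big(\gamma(\sigma \govar 1_q)\big) \wedge s_\mu\big(\gamma(1_p \govar \tau)\big).
\]
The two sign factors cancel, leaving a sum over all $(n,m)$-shuffles of the wedge products above. The key observation is that the wedge factors $\gamma(\sigma \govar 1_q)$ and $\gamma(1_p \govar \tau)$ live in the images of the two commuting subalgebras $M_p(A) \otimes 1_q$ and $1_p \otimes M_q(B)$ inside $M_{pq}(A \grad B)$, so the wedge product here is really the exterior wedge product in the sense of Section \ref{extlie}.

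I would then appeal to Lemma \ref{intsimp}, applied componentwise to each entry of the wedge product: since wedge products in $\clie{(M_p(A) \grad M_q(B))}{n+m}$ are multilinear, the shuffle sum of integrals of $s_\nu(\alpha) \wedge s_\mu(\beta)$ over $\Delta^{n+m}$ collapses to the wedge product of integrals, giving
\[
\Big(\int_{\Delta^n} \gamma(\sigma \govar 1_q)\, dt\Big) \wedge \Big(\int_{\Delta^m} \gamma(1_p \govar \tau)\, dt\Big) = L(\sigma \govar 1_q) \wedge L(1_p \govar \tau).
\]
Finally, since $\sigma \govar 1_q$ and $1_p \govar \tau$ are obtained from $\sigma$ and $\tau$ by pushforward along $\phi$ composed with the two inclusions $M_p(A) \hookrightarrow M_p(A)\grad M_q(B)$ and $M_q(B) \hookrightarrow M_p(A)\grad M_q(B)$, chain-map functoriality of $L$ identifies the right-hand side with $\phi_*(L(\sigma) \wedge^E L(\tau))$.

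The main technical point is the passage from the shuffle sum of wedge products to a single wedge product via Lemma \ref{intsimp}. Lemma \ref{intsimp} is stated for the tensor product in a Banach algebra, but I need its analogue where each of the $n+m$ positions in the wedge is occupied by an element that lies purely in the $A$-factor or purely in the $B$-factor according to whether its index is in $\T{Im}(\nu)$ or $\T{Im}(\mu)$. This is exactly the pattern produced by Lemma \ref{techgamma}, so the reduction to Lemma \ref{intsimp} is position-by-position, and no further analytic input is needed beyond continuity of wedge product.
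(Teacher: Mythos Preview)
Your proposal is correct and follows essentially the same route as the paper: expand $L(\sigma \times_\varphi \tau)$ as a shuffle sum of integrals, apply Lemma \ref{techgamma} to each integrand so that the signs cancel, use Lemma \ref{intsimp} to collapse the shuffle sum to $L(\sigma \govar 1_q) \wedge L(1_p \govar \tau)$, and conclude by naturality of $L$. Your final paragraph, noting that Lemma \ref{intsimp} must be applied with $A$ and $B$ replaced by the appropriate exterior powers and the tensor product replaced by the wedge, makes explicit a point the paper leaves implicit.
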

\begin{proof}
Using Lemma \ref{techgamma} and Lemma \ref{intsimp} we get that 
\[
\begin{split}
L(\sigma \times_\varphi \tau) 
& = \sum_{(\mu,\nu)\in \Sigma_{(n,m)}}\T{sgn}(\mu,\nu) 
\int_{\Delta^{n+m}} 
\gamma(s_\nu(\sigma) \govar s_\mu(\tau)) 
\,dt_1\ldots dt_{n+m} \\
& = \sum_{(\mu,\nu) \in \Sigma_{(n,m)}} 
\int_{\Delta^{n+m}}
s_\nu\big(\gamma(\sigma \govar 1_q)\big) 
\wedge 
s_\mu\big(\gamma(1_p \govar \tau) \big) 
\,dt_1\ldots dt_{n+m} \\
& = L(\sigma \govar 1_q) \wedge L(1_p \govar \tau)  
\end{split}
\]
The desired result now follows by naturality of the logarithm. 
\end{proof}

\subsection{The multiplicative properties of the antisymmetrization}\label{mulant}
Let $A$ and $B$ be unital Banach algebras. We will now show that the
antisymmetrization $\epsi : \clie{A}{*} \to \ccyc{A}{*-1}$ respects the
product structures on the continuous Lie algebra homology and the
continuous cyclic homology. The definition of the
antisymmetrization is recalled in the beginning of this section and the
exterior products considered are defined in Section \ref{extlie} and Section
\ref{extcyc}. 

\begin{prop}\label{antprod} 
For each $x \in \clie{A}{n}$ and each $y \in \clie{B}{m}$ we have 
the equality 
\[
\epsi(x \wedge^E y) = \epsi(x) * \epsi(y)
\]
in $\ccyc{A \grad B}{n+m-1}$.
\end{prop}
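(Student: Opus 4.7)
The plan is to establish the identity at the level of Hochschild chains, from which the equality in $\ccyc{A \grad B}{n+m-1}$ follows immediately, by expanding both sides and exhibiting a sign-preserving bijection between their indexing sets. Fix wedge representatives $x = x_0 \wlw x_{n-1}$ and $y = y_0 \wlw y_{m-1}$, and set $z_i = x_i \otimes 1_B$ for $0 \le i \le n-1$ and $w_j = 1_A \otimes y_j$ for $0 \le j \le m-1$ in $A \grad B$. By the definition of the exterior wedge product in Section \ref{extlie}, $x \wedge^E y = z_0 \wedge z_1 \wlw z_{n-1} \wedge w_0 \wlw w_{m-1}$, so the left-hand side expands as
\[
\epsi(x \wedge^E y) = \sum_{\sigma \in \Sigma_{n+m-1}} \T{sgn}(\sigma)\, z_0 \otimes v_{\sigma(1)} \olo v_{\sigma(n+m-1)},
\]
where $v_i = z_i$ for $1 \le i \le n-1$ and $v_{n+j} = w_j$ for $0 \le j \le m-1$. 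Unpacking the right-hand side via $\epsi(x) * \epsi(y) = \epsi(x) \times (sN\epsi(y))$ together with the definitions of the antisymmetrizations, the norm $N = 1 + t \plp t^{m-1}$, the extra degeneracy $s$, and the Hochschild shuffle product yields a sum indexed by quadruples $(s, r, k, (\mu, \nu))$ with $s \in \Sigma_{n-1}$, $r \in \Sigma_{m-1}$, a cyclic shift $k \in \{0,\ldots,m-1\}$ coming from $N$, and an $(n-1,m)$-shuffle $(\mu, \nu)$ from the shuffle product. Since $s$ prepends $1_B$ to $N\epsi(y)$, the shuffle product always produces $x_0 \cdot 1_B = z_0$ as its leading tensor factor, matching the left side.

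Both sums comprise exactly $(n+m-1)!$ monomials, since $\binom{n+m-1}{n-1}(n-1)!\, m\, (m-1)! = (n+m-1)!$. I propose the following bijection between $\Sigma_{n+m-1}$ and the set of quadruples: given $\sigma$, let $(\mu, \nu)$ be the shuffle recording the partition of $\{1, \ldots, n+m-1\}$ into positions where $\sigma$ takes values in $\{1, \ldots, n-1\}$ versus positions where it takes values in $\{n, \ldots, n+m-1\}$; let $s \in \Sigma_{n-1}$ be the induced ordering of the $z$-block; and let $(k, r) \in \{0, \ldots, m-1\} \times \Sigma_{m-1}$ be the unique decomposition of the induced ordering of the $w$-block as a cyclic rotation by $k$ places of an ordering of the form $(w_0, w_{r(1)}, \ldots, w_{r(m-1)})$. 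The inverse, constructing $\sigma$ from a quadruple, is straightforward.

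The heart of the argument, and the main obstacle, is verifying the sign identity
\[
\T{sgn}(\sigma) = \T{sgn}(\mu, \nu) \cdot \T{sgn}(s) \cdot \T{sgn}(r) \cdot (-1)^{k(m-1)},
\]
whose last factor precisely matches the sign of $t^k$ acting in Hochschild degree $m-1$, since a single cyclic shift of $m$ tensor slots is an $m$-cycle of sign $(-1)^{m-1}$. This identity is a standard permutation decomposition: $\sigma$ factors uniquely as a block shuffle, a permutation within each block, and a cyclic rotation within the $w$-block, with signs multiplying accordingly. Once the bijection and sign identity are in place, the two sums agree monomial by monomial in $C_{n+m-1}(A \grad B)$, and the equality descends to $\ccyc{A \grad B}{n+m-1}$.
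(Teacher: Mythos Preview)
Your proof is correct and follows essentially the same bijection strategy as the paper. The one difference worth noting is packaging: the paper avoids the explicit cyclic decomposition of the $w$-block by invoking the identity $sN\epsi(y) = \epsi(1_B \wedge y)$, which collapses your pair $(k,r) \in \{0,\ldots,m-1\} \times \Sigma_{m-1}$ into a single $\tau \in \Sigma_m$ and reduces the argument to the standard factorization $\Sigma_{n+m-1} \cong \Sigma_{(n-1,m)} \times (\Sigma_{n-1} \times \Sigma_m)$ with its obvious sign multiplicativity. Your route is more explicit about where the norm $N$ and extra degeneracy $s$ enter; the paper's is shorter but hides that combinatorics inside the antisymmetrization of $1_B \wedge y$.
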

\begin{proof}
Let $x = x_0 \wedge x_1 \wlw x_{n-1} \in \clie{A}{n}$ and let $y = y_0 \wedge
y_1 \wlw y_{m-1} \in \clie{B}{m}$. For each $i \in \{0,1,\ldots,n+m-1\}$, let 
\[
z_i = \fork{ccc}{
x_i \otimes 1_B \sep{for}  i \in \{0,\ldots,n-1\} \\
1_A \otimes y_{i-n} \sep{for} i \in \{n,\ldots,n+m-1\}
}
\]
By definition of the exterior wedge product and the antisymmetrization map we
get 
\[
\epsi(x \wedge^E y) = \sum_{s \in \Sigma_{n+m-1}}\T{sgn}(s)
z_0 \otimes z_{s(1)} \olo z_{s(n+m-1)}
\]
However, using the bijective correspondence 
\[
\arr{ccc}{
\Sigma_{(n-1,m)} \times (\Sigma_{n-1} \times \Sigma_m) \to \Sigma_{n+m-1} 
& \q & (\mu, (\sigma \times \tau)) \mapsto \mu \circ (\sigma \times \tau)
}
\]
we recognize the right hand side as the exterior Hochschild shuffle product of
the elements 
\[
\arr{ccc}{
\sum_{\sigma\in \Sigma_{n-1}}\T{sgn}(\sigma)
x_0 \otimes x_{\sigma(1)}\olo x_{\sigma(n-1)} 
\sep{and} 
\sum_{\tau \in \Sigma_m} \T{sgn}(\tau) 
1_B \otimes y_{\tau(0)} \olo y_{\tau(m-1)}  
}
\]
We therefore have  
\[
\epsi(x \wedge^E y) 
= \epsi(x) \times \epsi(1_B \wedge y) 
= \epsi(x) \times (sN\epsi)(y) = \epsi(x) * \epsi(y)
\]
proving the desired result. 
\end{proof}

\subsection{The multiplicative properties of the generalized
  trace}\label{multrace}
In this section we will show that the generalized trace $\T{TR} :
\ccyc{M_p(A)}{*} \to \ccyc{A}{*}$ respects the exterior product of degree one
in continuous cyclic homology.

Let $\phi : M_p(A) \grad M_q(B) \to M_{pq}(A \grad B)$ denote the continuous
algebra homomorphism induced by some isomorphism $\varphi : A^p \otimes_\zz B^q
\to (A \otimes_\zz B)^{pq}$ of $(A \otimes_\zz B)$-bimodules. 

\begin{prop}\label{traceprod}
For each $x \in \ccyc{M_p(A)}{n}$ and each $y \in \ccyc{M_q(B)}{m}$ we have the equality 
\[
\E{TR}(x) * \E{TR}(y) = (\E{TR}\circ \phi_*)(x * y)
\]
in $\ccyc{A \grad B}{n+m+1}$. 
\end{prop}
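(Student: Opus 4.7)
The plan is to reduce the proposition to two separate compatibility properties of the generalized trace $\T{TR}$, both of which are essentially classical. The first is that $\T{TR}$ is a morphism of continuous cyclic modules, so in particular it commutes with the extra degeneracy $s$ and the norm operator $N$. The second is the shuffle-multiplicativity of $\T{TR}$, namely
\[
\T{TR}(\phi_*(x \times z)) = \T{TR}(x) \times \T{TR}(z)
\]
for Hochschild chains $x \in \choch{M_p(A)}{n}$ and $z \in \choch{M_q(B)}{k}$. Granting these, the proposition follows by direct manipulation: by definition one has $x * y = x \times (sNy)$ and $\T{TR}(x) * \T{TR}(y) = \T{TR}(x) \times sN\T{TR}(y)$, so moving $sN$ inside $\T{TR}$ via the cyclic-module property and then absorbing $\phi_*$ via the shuffle-multiplicativity identifies the two sides of the desired equality.

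First I would recall the cyclic-module property of the generalized trace. This is standard, see \cite{Loday2}, and amounts to the cyclic invariance of the trace of a product of matrices. In particular $\T{TR} \circ s = s \circ \T{TR}$ and $\T{TR} \circ N = N \circ \T{TR}$, so $\T{TR}(sNy) = sN\T{TR}(y)$.

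Next I would establish the shuffle-multiplicativity. Working on elementary tensors $x = m_0 \otimes \ldots \otimes m_n$ with $m_i \in M_p(A)$ and $z = n_0 \otimes \ldots \otimes n_k$ with $n_j \in M_q(B)$, one expands $\phi_*(x \times z)$ as the usual signed sum over $(n+1, k+1)$-shuffles of tensor products of factors of the form $\phi(m_i \otimes 1_q)$ and $\phi(1_p \otimes n_j)$. By construction of $\phi$ from $\varphi$, the matrix entries of $\phi(m \otimes 1_q)$ and $\phi(1_p \otimes n)$ in $M_{pq}(A \grad B)$ are elementary tensors of matrix entries of $m$ with matrix entries of $n$, indexed so that the $pq$-index cyclic sum defining $\T{TR}$ factors as a $p$-index cyclic sum over the $M_p(A)$-factors and an independent $q$-index cyclic sum over the $M_q(B)$-factors. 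Reassembling, this is exactly $\T{TR}(x) \times \T{TR}(z)$, with matching shuffle signs on both sides.

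The main obstacle is the index bookkeeping in the second step, in particular tracking the interplay between the shuffle signs, the matrix-index cyclic identification, and the tensor-factor signs. However, the argument is essentially the continuous analogue of the classical Morita-invariance computation for Hochschild homology, and no conceptual difficulty arises beyond the verification that $\varphi$ decouples the two index sums in the right way.
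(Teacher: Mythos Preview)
Your proposal is correct and follows essentially the same approach as the paper: both reduce the statement to the shuffle-multiplicativity $(\T{TR}\circ\phi_*)(x\times z)=\T{TR}(x)\times\T{TR}(z)$ together with the cyclic-module identity $\T{TR}\circ sN = sN\circ\T{TR}$. The paper streamlines your second step by observing that the scalar identity $\T{Tr}(\phi(u\otimes v))=\T{Tr}(u)\,\T{Tr}(v)$ for $u\in M_p(\cc)$, $v\in M_q(\cc)$, combined with Loday's explicit formula for the generalized trace, yields the shuffle-multiplicativity directly without the index bookkeeping you anticipate.
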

\begin{proof}
Let $u \in M_p(\cc)$ and let $v \in M_q(\cc)$. We start by noticing the
identity $\T{Tr}(\phi(u \otimes v)) = \T{Tr}(u)\T{Tr}(v)$. Here $\T{Tr} :
M_k(\cc) \to \cc$ denotes the usual trace.  

Using the formula of \cite[Lemma $1.2.2$]{Loday2} for the generalized trace we
thus get that
\[
(\T{TR} \circ \phi_*)(x \times (sN)(y)) 
= \T{TR}(x) \times (\T{TR}sN)(y) 
\]
The result of the Theorem then follows from the identity $(\T{TR}sN)y) =
(sN\T{TR})(y)$, see \cite[Lemma $2.2.8$]{Loday2} for example.
\end{proof}

\subsection{The multiplicative properties of the Hurewicz
  homomorphism}\label{mulhur} 
Let $A$ and $B$ be unital Banach algebras. In this section we will investigate
the behaviour of the Hurewicz homomorphism with respect to the exterior
product in relative $K$-theory and the exterior shuffle product on the
homology of the simplicial sets $R_p(A)$. The exterior product in relative
$K$-theory was constructed in Section \ref{extrel} and the exterior shuffle
product was defined in Section \ref{extshuff}. 

For each $n \in \nn$ denote the class of $1 \in \zz$ under the isomorphism
$\zz \cong H_n(S^n)$ by $\F{1}_n \in H_n(S^n)$. Furthermore, we let  
\[
\T{sh} : H_n(S^n) \otimes_\zz H_m(S^m) \to H_{n+m}(S^n \times S^m)
\]
denote the shuffle map in singular homology. Let $\pi : S^n \times S^m \to S^n
\wedge S^m$ denote the quotient map. We then get the equality
\begin{equation}\label{eq:hurprod}
(\pi_* \circ \T{sh})(\F{1}_n \otimes \F{1}_m) = \F{1}_{n+m}
\end{equation}
in $H_{n+m}(S^{n+m})\cong \zz$. For notational reasons we define 
\[ 
\zeta := \T{sh}(\F{1}_n \otimes \F{1}_m) \in H_{n+m}(S^n \times S^m)
\]
The next Lemma is the first step needed in order to express the Hurewicz
homomorphism of a product in terms of the Hurewicz homomorphism of the
original elements.

\begin{lemma}\label{hurtens}
Let $p,q \in \{3,4,\ldots\}$. Let $f : S^n \to |R_p(A)|^+ $ and $g : S^m \to
|R_q(B)|^+$ be continuous maps. We then have the equality
\[
(f \grad^+ g)_*(\zeta) = \iota_*(h_n(f) \times h_m(g))
\]
in $H_{n+m}(R(A \grad B))\cong H_{n+m}(|R(A \grad B)|^+)$. Here $\iota :
R_{pq}(A \grad B) \to R(A \grad B)$ denotes the inclusion. 
\end{lemma}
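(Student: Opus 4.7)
The plan is to trace through the definition of $f \grad^+ g$ and reduce the claim to the classical compatibility between the singular cross product and the simplicial shuffle map. By construction of $\grad^+$ in Section \ref{conprod}, one has $f \grad^+ g = \iota^+ \circ \govar^+ \circ k^{-1} \circ (f \times g)$ as a map $S^n \times S^m \to |R(A \grad B)|^+$, so
\[
(f \grad^+ g)_*(\zeta) = (\iota^+ \circ \govar^+ \circ k^{-1})_*(f \times g)_*(\zeta).
\]
I would first use naturality of the singular shuffle/cross product to rewrite $(f \times g)_*(\zeta) = f_*(\F{1}_n) \times g_*(\F{1}_m) = h_n(f) \times h_m(g)$ in $H_{n+m}(|R_p(A)|^+ \times |R_q(B)|^+)$, where the product on the right is the singular cross product and the second equality is just the definition of the Hurewicz homomorphism $h_n(f) = f_*(\F{1}_n)$.

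Next I would identify the effect of $k^{-1}_*$ followed by $\govar^+_*$. Since $k^{-1}$ is a homotopy inverse of the map induced by the two projections, and the Eilenberg--Zilber theorem provides a natural chain equivalence between $C_*(X) \otimes C_*(Y)$ and $C_*(X \times Y)$ for simplicial sets, the class $k^{-1}_*(h_n(f) \times h_m(g))$ in $H_{n+m}(|R_p(A) \times R_q(B)|^+) \cong H_{n+m}(R_p(A) \times R_q(B))$ corresponds precisely to the simplicial shuffle $\T{sh}(h_n(f) \otimes h_m(g))$. Pushing forward along the simplicial map $\govar$ then produces the exterior shuffle product $h_n(f) \times h_m(g) \in H_{n+m}(R_{pq}(A \grad B))$ of Section \ref{extshuff} by definition. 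Finally, $\iota^+_*$ corresponds to $\iota_*$ under the plus-construction isomorphism, yielding the claimed identity.

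The main obstacle is the bookkeeping of the various identifications rather than any single difficult step. One must apply, in the right order, naturality of the singular cross product, Eilenberg--Zilber compatibility between singular and simplicial shuffle products, the homology isomorphism $H_*(X) \cong H_*(|X|) \cong H_*(|X|^+)$ for a simplicial set $X$ with perfect commutator subgroup, and the factorization of $\grad^+$ through $k^{-1}$, $\govar^+$ and $\iota^+$. Each ingredient is classical, so the proof reduces to checking that the diagram relating these four maps commutes on the class $\zeta$.
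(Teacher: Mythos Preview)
Your proposal is correct and follows essentially the same approach as the paper's proof. Both arguments reduce the claim to the fact that the composition $\grad^+_* \circ \T{sh}$ agrees (under the canonical homology identifications for the plus-construction) with $\iota_* \circ (\govar)_* \circ \T{sh}$, and then use naturality of the shuffle product together with the definition $h_n(f) = f_*(\F{1}_n)$; the paper simply asserts this agreement in one line, whereas you unpack it more explicitly by tracing through $k^{-1}$ and invoking Eilenberg--Zilber.
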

\begin{proof}
Let us fix an isomorphism $\varphi : A^p \otimes_\zz B^q \to (A \otimes_\zz
B)^{pq}$ of $(A \otimes_\zz B)$-bimodules. 

Up to canonical identifications in homology we get that the compositions 
\[
\begin{split}
& \grad^+_* \circ \T{sh} : H_n(|R_p(A)|^+) \otimes_\zz H_m(|R_q(B)|^+) \\
& \qqqq \to H_{n+m}(|R_p(A)|^+ \times |R_q(B)|^+) 
\to H_{n+m}(|R(A \grad B)|^+) \q \T{and} \\  
& \iota_* \circ (\govar)_* \circ \T{sh} : 
H_n(R_p(A)) \otimes_\zz H_m(R_q(B)) \\ 
& \qqqq \to H_{n+m}(R_p(A) \times R_q(B)) 
\to H_{n+m}(R(A \grad B)) 
\end{split}
\]
coincide. See Section \ref{extshuff} and Section \ref{conprod}. 

By definition of the exterior shuffle product and the Hurewicz homomorphism we
thus have 
\[
\iota_*( h_n(f) \times h_m(g) ) 
= (\grad^+_* \circ \T{sh})(f_*(\F{1}_n) \otimes g_*(\F{1}_m))
= (\grad^+_* \circ (f \times g)_*)(\zeta) 
\]
proving the Lemma. 
\end{proof}

The combination of the next Lemma and Lemma \ref{hurtens} entails that the
Hurewicz homomorphism respects the product structures in an appropriate sense.

\begin{lemma}\label{hurrel}
Suppose that the elements $x \in K_n^{\E{rel}}(A)$ and $y \in
K_m^{\E{rel}}(B)$ are represented by the continuous maps 
\[
\arr{ccc}{
f : S^n \to |R_p(A)|^+ \subseteq |R(A)|^+ 
\sep{and} 
g : S^m \to |R_q(B)|^+ \subseteq |R(B)|^+ 
}
\]
respectively. We then have the equality 
\[
h_{n+m}(x *^{\E{rel}} y) = (\gamma^{\E{rel}}_{p,\,q} \circ (f \times g))_*(\zeta)
\]
in $H_{n+m}(|R(A \grad B)|^+)$. Here $\gamma^{\E{rel}}_{p,\,q} : |R_p(A)|^+
\times |R_q(B)|^+ \to |R(A \grad B)|^+$ denotes the product map constructed in
Section \ref{conprod}. 
\end{lemma}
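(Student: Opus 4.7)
The plan is to unwind the definition of $x *^{\T{rel}} y$ in terms of $\widehat{\gamma}^{\T{rel}}$, reduce to the finite-dimensional version $\widehat{\gamma}_{p,q}^{\T{rel}}$, then apply the standard compatibility of the Hurewicz map with the smash-to-product collapse encoded in equation \eqref{eq:hurprod}. More precisely, by Definition of the exterior product in relative $K$-theory we have $x *^{\T{rel}} y = [\widehat{\gamma}^{\T{rel}} \circ (f \wedge g)]$, where on the right we regard $f$ and $g$ as landing in $|R(A)|^+$ and $|R(B)|^+$ via the inclusions $\iota$. Since $f$ and $g$ factor through $|R_p(A)|^+$ and $|R_q(B)|^+$, and since the maps $\widehat{\gamma}^{\T{rel}}\circ(\iota\wedge\iota)$ and $\widehat{\gamma}_{p,q}^{\T{rel}}$ agree up to weak homotopy, we can write
\[
h_{n+m}(x *^{\T{rel}} y) = \big(\widehat{\gamma}_{p,q}^{\T{rel}} \circ (f\wedge g)\big)_*(\F{1}_{n+m}),
\]
where $\F{1}_{n+m} \in H_{n+m}(S^n \wedge S^m)$ is the fundamental class under the identification $S^n \wedge S^m \cong S^{n+m}$.

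Next I would invoke the identity $\F{1}_{n+m} = \pi_*(\zeta)$ provided by equation \eqref{eq:hurprod}, where $\pi : S^n \times S^m \to S^n \wedge S^m$ is the quotient map. Combined with the naturality relation $(f \wedge g) \circ \pi = \pi' \circ (f \times g)$ for the quotient $\pi' : |R_p(A)|^+ \times |R_q(B)|^+ \to |R_p(A)|^+ \wedge |R_q(B)|^+$, this yields
\[
h_{n+m}(x *^{\T{rel}} y) = \big(\widehat{\gamma}_{p,q}^{\T{rel}} \circ \pi' \circ (f\times g)\big)_*(\zeta).
\]

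To finish, I would use the defining homotopy $\widehat{\gamma}_{p,q}^{\T{rel}} \circ \pi' \simeq \gamma_{p,q}^{\T{rel}}$ from Section \ref{conprod}, which immediately gives the desired equality
\[
h_{n+m}(x *^{\T{rel}} y) = \big(\gamma_{p,q}^{\T{rel}} \circ (f\times g)\big)_*(\zeta)
\]
in $H_{n+m}(|R(A\grad B)|^+)$.

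The essentially non-trivial ingredient is the compatibility of the Hurewicz homomorphism with the smash/product collapse in the form $\pi_*(\zeta) = \F{1}_{n+m}$, which is precisely what \eqref{eq:hurprod} records. The main obstacle, and the only real delicacy of the argument, is bookkeeping the distinction between strict equality, pointed homotopy and weak homotopy in the chain of identifications involving $\widehat{\gamma}^{\T{rel}}$, $\widehat{\gamma}_{p,q}^{\T{rel}}$, and $\gamma_{p,q}^{\T{rel}}$; since everything is applied only after passing to the homology functor, weak homotopy is enough, and no further analytic input is needed.
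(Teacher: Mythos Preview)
Your argument is correct and follows essentially the same route as the paper: represent the product via $\widehat{\gamma}_{p,q}^{\T{rel}}\circ(f\wedge g)$, push $\F{1}_{n+m}$ back to $\zeta$ via \eqref{eq:hurprod}, use naturality of the quotient maps, and finish with the homotopy $\widehat{\gamma}_{p,q}^{\T{rel}}\circ\pi \simeq \gamma_{p,q}^{\T{rel}}$. The only difference is that you make explicit the passage from the stable map $\widehat{\gamma}^{\T{rel}}$ to the finite-level $\widehat{\gamma}_{p,q}^{\T{rel}}$ via the weak homotopy recorded in Section~\ref{conprod}, which the paper absorbs into the phrase ``by definition''.
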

\begin{proof}
By definition, the product $x*^{\T{rel}}y \in K_{n+m}^{\T{rel}}(A \grad B)$ is
represented by the map 
\[
\widehat{\gamma}^{\T{rel}}_{p,\,q} \circ (f \wedge g) 
: S^n \wedge S^m \to |R(A \grad B)|^+
\]
Using \eqref{eq:hurprod} we thus get that the Hurewicz homomorphism of the
product is given by 
\[
\begin{split}
h_{n+m}(x*^{\T{rel}}y) 
&= (\widehat{\gamma}^{\T{rel}}_{p,\,q} \circ (f \wedge g))_*(\F{1}_{n+m}) \\
&= (\widehat{\gamma}^{\T{rel}}_{p,\,q} \circ (f \wedge g) \circ \pi)_*(\zeta) \\
&= (\widehat{\gamma}^{\T{rel}}_{p,\,q} \circ \pi \circ (f \times g))_*(\zeta) 
\end{split}
\]
The result of the lemma now follows by noting that the maps 
\[
\arr{ccc}{
\widehat{\gamma}^{\T{rel}}_{p,\,q} \circ \pi
\sep{and}  
\gamma^{\T{rel}}_{p,\,q} : |R_p(A)|^+ \times |R_q(B)|^+ \to |R(A \grad B)|^+ 
}
\]
are homotopic. See Section \ref{conprod}. 
\end{proof}

\subsection{The relative Chern character respects the exterior products}
We are now ready to prove the main result of this part of the paper: The
counterpart in continuous cyclic homology of the exterior product in relative
$K$-theory is given by the exterior product of degree one. The relevant
multiplicative structures are described in Section \ref{conprod} and Section
\ref{extcyc}.

Let $+ : HC_*(A) \oplus HC_*(A) \to HC_*(A)$ denote the addition on the
continuous cyclic homology groups. Let $\pi : H_*(R(A) \times R(A)) \to
H_*(R(A)) \oplus H_*(R(A))$ denote the map induced by the projection onto each
factor. Furthermore, let $\oplus : R(A) \times R(A) \to R(A)$ denote the
pointwise direct sum as introduced in Section \ref{hstruct}. We will need the
following preliminary result on the additive structures.

\begin{lemma}\label{sumresp}
We have the equality 
\[
+ \circ 
\big((\E{TR} \circ \epsi \circ L) \oplus (\E{TR} \circ \epsi \circ L) \big) 
\circ \pi  
= \E{TR} \circ \epsi \circ L \circ \oplus_*  
\]
between maps $H_n(R(A) \times R(A)) \to HC_{n-1}(A)$. Here we have suppressed
the identification $H_n(R(A)) \cong H_n(R^\infty(A))$ of Lemma
\ref{consmooth}.  
\end{lemma}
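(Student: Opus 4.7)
The plan is to reduce the identity to the level of smooth chains via Lemma~\ref{consmooth} and then exploit the block-diagonal structure of the pointwise direct sum. For a pair $(\sigma,\tau) \in R^\infty_p(A)_n \times R^\infty_q(A)_n$, the map $\sigma \oplus \tau : \Delta^n \to GL_{p+q}(A)$ is block diagonal, and since derivatives and inverses of block diagonal matrices stay block diagonal, a direct computation yields
\[
\Gamma_j(\sigma \oplus \tau) = \iota_1(\Gamma_j(\sigma)) + \iota_2(\Gamma_j(\tau))
\]
where $\iota_1 : M_p(A) \to M_{p+q}(A)$ and $\iota_2 : M_q(A) \to M_{p+q}(A)$ denote the upper-left and lower-right block embeddings.

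Next I would expand the integrand of $L(\sigma \oplus \tau)$ by multilinearity of the wedge product. Setting $\sigma_1 := \sigma$ and $\sigma_2 := \tau$, this produces $2^n$ terms indexed by functions $\epsi : \{1,\ldots,n\} \to \{1,2\}$, each of the shape $\iota_{\epsi(1)}(\Gamma_1(\sigma_{\epsi(1)})) \wlw \iota_{\epsi(n)}(\Gamma_n(\sigma_{\epsi(n)}))$, integrated over $\Delta^n$.

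The crucial step is to show that after applying the antisymmetrization $\varepsilon$ and the generalized trace $\T{TR}$, only the two pure functions $\epsi \equiv 1$ and $\epsi \equiv 2$ survive. This amounts to the combinatorial fact that $\T{TR}$ of an elementary tensor $x_0 \otimes \ldots \otimes x_{n-1}$ of matrix units cyclically matches consecutive column--row indices, and so a contribution can only be nonzero when all the $x_i$ share a common block of the decomposition $M_{p+q}(A) = \iota_1(M_p(A)) \oplus \iota_2(M_q(A)) \oplus (\textrm{off-diagonal})$. Any mixed $\epsi$ therefore drops out after $\T{TR}$, and the surviving pure terms equal $\iota_{1,*} L(\sigma)$ and $\iota_{2,*} L(\tau)$ respectively.

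Finally, naturality of $\T{TR} \circ \varepsilon$ with respect to the block inclusions, together with the identity $\T{TR} \circ \iota_{k,*} = \T{TR}$ on continuous cyclic homology, collapses these two terms to $(\T{TR} \circ \varepsilon \circ L)(\sigma)$ and $(\T{TR} \circ \varepsilon \circ L)(\tau)$. Adding them gives precisely the value of $+ \circ \big( (\T{TR} \circ \varepsilon \circ L) \oplus (\T{TR} \circ \varepsilon \circ L) \big) \circ \pi$ on the product chain, and the identity extends by bilinearity, the shuffle map, and passage to homology to all of $H_n(R(A) \times R(A))$. The only nontrivial point is the vanishing of the mixed cross terms; everything else is a direct manipulation of the definitions.
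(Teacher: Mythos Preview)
Your argument is correct and is essentially an explicit unpacking of the paper's one-line proof, which simply invokes ``naturality of the involved maps and the behaviour of the generalized trace with respect to the direct sum operation.'' Your block-diagonal computation of $\Gamma_j(\sigma\oplus\tau)$ and the vanishing of mixed terms under $\T{TR}$ is precisely what that phrase encodes. One small remark: your final sentence about extending via ``the shuffle map'' is unnecessary --- you have already verified the identity on arbitrary generators $(\sigma,\tau)\in C_n(R^\infty(A)\times R^\infty(A))$ at the chain level, so $\zz$-linearity and passage to homology suffice without any Eilenberg--Zilber input.
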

\begin{proof}
The result is a consequence of the naturality of the involved maps and the
behaviour of the generalized trace with respect to the direct sum operation. 
\end{proof}

Using the work accomplished in Section \ref{mullog}, \ref{mulant},
\ref{multrace} and \ref{mulhur} we are now able to prove the first main
theorem of this paper. 

\begin{prop}\label{chernprod} 
For each $x \in K_n^{\E{rel}}(A)$ and each $y \in K_m^{\E{rel}}(B)$ we have the
equality 
\[
\E{ch}^{\E{rel}}(x *^\E{rel} y) = \E{ch}^{\E{rel}}(x) * \E{ch}^{\E{rel}}(y)
\]
in $\hcyc{A \grad B}{n+m-1}$. 
\end{prop}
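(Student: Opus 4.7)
The plan is to chain together the four multiplicativity results established in Sections \ref{mullog}, \ref{mulant}, \ref{multrace}, and \ref{mulhur}, since by definition $\T{ch}^{\T{rel}} = \T{TR} \circ \epsi \circ L \circ h_n$. I would begin by choosing representatives $f : S^n \to |R_p(A)|^+$ and $g : S^m \to |R_q(B)|^+$ for $x$ and $y$ respectively, with $p,q \in \{3,4,\ldots\}$.

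By Lemma \ref{hurrel}, the Hurewicz image of $x *^{\T{rel}} y$ equals $(\gamma^{\T{rel}}_{p,q} \circ (f \times g))_*(\zeta)$, where $\gamma^{\T{rel}}_{p,q}$ is the four-term combination $x \grad^+ y - x \grad^+ 1_q - 1_p \grad^+ y + 1_p \grad^+ 1_q$ defined in Section \ref{conprod}. Applying $\T{TR} \circ \epsi \circ L$ and using Lemma \ref{sumresp} to distribute it over this four-term sum, the three terms containing a constant factor $1_p$ or $1_q$ die because the logarithm \eqref{eq:logarithm} annihilates any smooth map that is constant in one of the simplex variables. What remains is the contribution from $(f \grad^+ g)_*(\zeta)$, which by Lemma \ref{hurtens} equals $\iota_*(h_n(f) \times h_m(g))$, where $\times$ is the exterior shuffle product in the homology of the simplicial set $R(A \grad B)$.

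At this point the remaining three components of the relative Chern character can be applied one at a time. Proposition \ref{logprod} turns the exterior shuffle product of $h_n(f)$ and $h_m(g)$ into the exterior wedge product in continuous Lie algebra homology, at the price of the algebra map $\phi_* : M_p(A) \grad M_q(B) \to M_{pq}(A \grad B)$ induced by $\varphi$. Proposition \ref{antprod} then converts the exterior wedge product into the exterior product of degree one in continuous cyclic homology. Finally, Proposition \ref{traceprod} tells us that the generalized trace is compatible with the cyclic product and simultaneously absorbs the map $\phi_*$, so that the outcome lives in $\hcyc{A \grad B}{n+m-1}$ and equals $\T{ch}^{\T{rel}}(x) * \T{ch}^{\T{rel}}(y)$, as required.

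The main obstacle is bookkeeping: one has to follow how the auxiliary matrix levels $p,q$ and the isomorphism $\varphi$ are eliminated by the generalized trace, and one has to verify carefully that Lemma \ref{sumresp} is genuinely applicable to the four-term combination defining $\gamma^{\T{rel}}_{p,q}$ (this uses the fact that $\T{TR} \circ \epsi \circ L$ factors additively through the direct sum $\oplus_*$ on $R(A \grad B)$, and that constant paths are killed by $L$). Once these compatibilities are in place, the theorem follows by stringing together the results of the previous four subsections.
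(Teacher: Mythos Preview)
Your proposal is correct and follows essentially the same route as the paper's proof: represent $x$ and $y$ by maps into finite-level plus-constructions, use Lemma \ref{hurrel} and Lemma \ref{hurtens} to reduce to the exterior shuffle product $h_n(f) \times h_m(g)$, and then apply Theorems \ref{logprod}, \ref{antprod}, and \ref{traceprod} in sequence. The only minor difference is in how the three correction terms are disposed of: the paper observes directly that $h_n(1_p)$ and $h_m(1_q)$ are trivial in homology (being Hurewicz images of constant maps), which is slightly cleaner than your argument via the logarithm, since those classes live in the homology of the plus-construction and need not a priori be represented by chains that are literally constant in a simplex variable.
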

\begin{proof}
Suppose that $x \in K_n^{\T{rel}}(A)$ and $y \in K_m^{\T{rel}}(B)$ are
represented by the maps $f : S^n \to |R_p(A)|^+\subseteq |R(A)|^+$ and $g :
S^m \to |R_q(B)|^+ \subseteq |R(B)|^+$ respectively. By Lemma \ref{hurrel} we
have 
\[
\begin{split}
\T{ch}^{\T{rel}}(x *^\T{rel} y) 
& = (\T{TR} \circ \epsi \circ L \circ h_{n+m})(x*^{\T{rel}} y) \\
& = (\T{TR} \circ \epsi \circ L) 
\circ  (\gamma_{p, q}^\T{rel} \circ (f \times g))_*(\zeta) 
\end{split}
\]
However it follows by definition of $\gamma_{p, q}^{\T{rel}} : |R_p(A)|^+
\times |R_q(B)|^+ \to |R(A \grad B)|^+$ and by Lemma \ref{hurtens} and Lemma
\ref{sumresp} that 
\[
\begin{split}
& (\T{TR} \circ \epsi \circ L) \circ  
(\gamma_{p, q}^{\T{rel}} \circ (f \times g))_*(\zeta) \\ 
& \q = (\T{TR} \circ \epsi \circ L)(h_n(f) \times h_m(g) ) 
+ (\T{TR} \circ \epsi \circ L)\big((V_* \circ \iota_*)(h_n(1_p) \times h_m(g))\big) \\  
& \qq + (\T{TR} \circ \epsi \circ L)\big((V_*\circ \iota_*)(h_n(f) \times h_m(1_q))\big) 
+ (\T{TR} \circ \epsi \circ L)(h_n(1_p) \times h_m(1_q))
\end{split}
\]
Here $V : |R(A \grad B)|^+ \to |R(A \grad B)|^+$ denotes the homotopy inverse
of the $H$-group structure on $|R(A \grad B)|^+$, see Section
\ref{hstruct}. But the elements $h_n(1_p) \in H_n(R_p(A))$ and $h_m(1_q) \in
H_m(R_q(B))$ are both trivial so we must have 
\[
\T{ch}^{\T{rel}}(x *^\T{rel} y) 
= (\T{TR} \circ \epsi \circ L)(h_n(f) \times h_m(g) )
\]
The conclusion of the Theorem now follows from a combination of the results in
Theorem \ref{logprod}, Theorem \ref{antprod} and Theorem \ref{traceprod}. 
\end{proof}

%%%%%%%%%%%%%%%%%%%%%%%%%%%%%%%%%%%%%%%%%%8<%%%%%%%%%%%%%%%%%%%%%%%%%%%%%%%%%%%%%%

%%%%%%%%%%%%%%%%%%%%%%%%%%%%%%%%%%%%%%%%%%8<%%%%%%%%%%%%%%%%%%%%%%%%%%%%%%%%%%%%%%

%%%%%%%%%%%%%%%%%%%%%%%%%%%%%%%%%%%%%%%%%%8<%%%%%%%%%%%%%%%%%%%%%%%%%%%%%%%%%%%%%%

%%%%%%%%%%%%%%%%%%%%%%%%%%%%%%%%%%%%%%%%%%8<%%%%%%%%%%%%%%%%%%%%%%%%%%%%%%%%%%%%%%

\section{A calculation of the multiplicative character}\label{mulcombcalc} 
We start this section by briefly recalling the construction of the
multiplicative character as given in \cite{ConKar}. See also \cite{Kaminker,
  Rosenberg2}.

Let $(F,H)$ denote an odd $2p$-summable Fredholm module over a unital Banach
algebra $A$. To ease the exposition we will assume that the representation
$\pi : A \to \C L(H)$ and the map $a \mapsto [F,\pi(a)] \in \C L^{2p}(H)$ are
both continuous. We will always suppress the representation. Remark that the
conditions on continuity are not necessary for the construction of the
multiplicative character to work. They are however convenient for our
exposition. See \cite{ConKar}.

The continuous linear map 
\[
\arr{ccc}{
\tau_{2p-1} : \ccyc{A}{2p-1} \to \cc 
& \q & 
(a_0,\ldots,a_{2p-1}) \mapsto 
\frac{c_p}{2^{2p}} \T{Tr}(F[F,a_0]\clc [F,a_{2p-1}])
}
\]
where $c_p = (-1)^{p-1}\frac{(2p-1)!}{(p-1)!}$ then determines a continuous
cyclic cocycle and consequently a homomorphism 
\[
\tau_{2p-1} : HC_{2p-1}(A) \to \cc
\]
See \cite{Connes, Connesgeom}. The composition of this index cocycle with the
relative Chern character thus yields a homomorphism 
\[
\tau_{2p-1} \circ \T{ch}_{2p}^{\T{rel}} : K_{2p}^{\T{rel}}(A) \to \cc
\]
This is the additive character of the Fredholm module. 

The next step in the construction then consists of showing that the image of
the composition 
\[
\tau_{2p-1} \circ \T{ch}_{2p}^{\T{rel}} \circ v : K_{2p+1}^{\T{top}}(A) \to \cc
\]
is contained in the additive subgroup $(2\pi i)^p \zz \subseteq \cc$. Here the
map $v : K_{2p+1}^{\T{top}}(A) \to K_{2p}^{\T{rel}}(A)$ is the boundary map of
the long exact sequence \eqref{eq:xseqk} in Section \ref{extrel}. This is
accomplished in \cite[Section $4.10$]{ConKar}. By consequence the additive
character descends to a homomorphism 
\[
M_F : \T{Coker}(v) \cong \T{Im}(\theta) \to \cc/(2\pi i)^p \zz
\]
This is the odd multiplicative character associated with the odd $2p$-summable
Fredholm module $(F,H)$. With some further effort the multiplicative character
can be extended to a map on algebraic $K$-theory, however we will only need
the restriction to the subgroup $\T{Im}(\theta) \subseteq K_{2p}(A)$ for our
calculations. Again, remark that the multiplicative character defined in this
section differs from the one given in \cite{ConKar} by the constant $(2p-1)!$
on $K_{2p}(A)$. 

\subsection{The relative Chern character of a product of
  contractions}\label{cherncontract} 
Let $A$ be a \emph{commutative}, unital Banach algebra. In this section we
will give a concrete formula for the application of the relative Chern
character to products of certain elements in relative $K$-theory. We will make
use of the multiplicative properties of the relative Chern character which we
investigated in Section \ref{mulrel}.

We let
\[
\arr{ccc}{ 
*^{\T{rel}} : \relkt{A}{n} \times \relkt{A}{m} \to \relkt{A}{n+m} 
\sep{and} 
* : \hcyc{A}{n-1} \otimes_\cc \hcyc{A}{m-1} \to \hcyc{A}{n+m-1}
}
\]
denote the (interior) products in relative $K$-theory and continuous cyclic
homology. Note that these products are only available by the commutativity
assumption on $A$. See Section \ref{conprod} and Section \ref{extcyc}. 

Furthermore, for each $a \in M_\infty(A)$ we let $\gamma_a \in R(A)_1$ denote
the smooth path defined by 
\[
\arr{ccc}{
\gamma_a(t)=e^{-ta} \sep{for all} t \in [0,1]
}
\]

\begin{prop}\label{cheprod}
Let $a_0,\ldots,a_{2p-1} \in M_\infty(A)$. The relative Chern character of the
product 
\[
x = [\gamma_{a_0}]*^{\E{rel}} \ldots *^{\E{rel}} [\gamma_{a_{2p-1}}] 
\in K_{2p}^{\E{rel}}(A)
\]
is given by 
\[
\E{ch}^{\E{rel}}(x) 
= \sum_{\mu \in \Sigma_{2p-1}}\E{sgn}(\mu)
\E{TR}(a_0) \otimes \E{TR}(a_{\mu(1)}) \olo \E{TR}(a_{\mu(2p-1)}) 
\in \hcyc{A}{2p-1}
\]
\end{prop}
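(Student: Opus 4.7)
The strategy is to combine the multiplicativity of the relative Chern character established in Theorem \ref{chernprod} with a direct computation on each one-dimensional factor $[\gamma_{a_i}]$, followed by a combinatorial unfolding of the iterated $*$-product in cyclic homology. Since $A$ is commutative, iterating Theorem \ref{chernprod} and composing with $\nabla_*$ gives
\[
\T{ch}^{\T{rel}}(x) = \T{ch}^{\T{rel}}([\gamma_{a_0}]) * \T{ch}^{\T{rel}}([\gamma_{a_1}]) * \cdots * \T{ch}^{\T{rel}}([\gamma_{a_{2p-1}}]) \in \hcyc{A}{2p-1}.
\]

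To compute a single factor $\T{ch}^{\T{rel}}([\gamma_a])$, I note that the Hurewicz image $h_1([\gamma_a]) \in H_1(R(A))$ is represented by the 1-simplex $\gamma_a$ itself. Since $\gamma_a(t) = e^{-ta}$ commutes with $a$, a direct calculation gives $\frac{\partial \gamma_a}{\partial t}(t) \cdot \gamma_a(t)^{-1} = -a$, so the logarithm formula \eqref{eq:logarithm} yields $L(\gamma_a) = -a$ in continuous Lie algebra degree one. The antisymmetrization $\varepsilon$ acts as the identity in degree one, and applying the generalized trace produces $\T{ch}^{\T{rel}}([\gamma_a]) = -\T{TR}(a) \in \hcyc{A}{0}$.

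Using $(-1)^{2p} = 1$, the proof now reduces to establishing the identity
\[
b_0 * b_1 * \cdots * b_n = \sum_{\mu \in \Sigma_n} \T{sgn}(\mu)\, (b_0, b_{\mu(1)}, \ldots, b_{\mu(n)}) \in \hcyc{A}{n}
\]
for arbitrary $b_0, \ldots, b_n \in A = \hcyc{A}{0}$, which I would prove by induction on $n$ from the defining relation $x * y = x \times (sN)(y)$. In degree zero, $N = 1$ and the extra degeneracy yields $(sN)(b) = (1,b) \in \ccyc{A}{1}$, so the inductive step amounts to applying the Hochschild shuffle product to the inductive expression for $b_0 * \cdots * b_n$ and $(1, b_{n+1})$, then re-indexing the resulting sum over $\Sigma_n \times \Sigma_{(n,1)}$ as a sum over $\Sigma_{n+1}$.

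The main obstacle will be the combinatorial bookkeeping in this inductive step: one must verify that the product of signs $\T{sgn}(\mu) \cdot \T{sgn}(\sigma)$ for $\mu \in \Sigma_n$ and $\sigma \in \Sigma_{(n,1)}$ matches the sign of the resulting element of $\Sigma_{n+1}$ under the natural bijection $\Sigma_n \times \Sigma_{(n,1)} \cong \Sigma_{n+1}$ given by insertion. Once this sign-preservation is checked, the stated formula follows immediately by taking $n = 2p - 1$ and $b_i = \T{TR}(a_i)$.
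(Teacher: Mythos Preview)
Your proposal is correct and follows essentially the same route as the paper: apply Theorem \ref{chernprod} to reduce to a product in cyclic homology, compute each factor $\T{ch}^{\T{rel}}([\gamma_a]) = -\T{TR}(a)$ via the logarithm integral, and then unfold the iterated degree-one product. The only difference is cosmetic: where the paper writes ``by definition of the product of degree one'' and stops, you spell out the induction on the shuffle formula; but that inductive step is exactly the special case $m=1$ of the bijection $\Sigma_{(n-1,m)} \times (\Sigma_{n-1} \times \Sigma_m) \to \Sigma_{n+m-1}$ already used in the proof of Theorem \ref{antprod}, so you could equally well invoke that result directly (noting $\varepsilon$ is the identity in degree one) and avoid the bookkeeping you flag as the main obstacle.
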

\begin{proof}
By Theorem \ref{chernprod} we have 
\[
\T{ch}^{\T{rel}}(x) 
= \T{ch}^{\T{rel}}[\gamma_{a_0}] * \ldots * 
\T{ch}^{\T{rel}}[\gamma_{a_{2p-1}}] \in HC_{2p-1}(A)
\]
Furthermore, the relative Chern character of the individual terms is given by 
\[
\T{ch}^{\T{rel}}([\gamma_a]) 
= (\T{TR} \circ \epsi \circ L)([\gamma_a]) 
= \T{TR}(\int_0^1 
\frac{d\gamma_a}{dt} \cdot \gamma_a^{-1}dt)
= -\T{TR}(a)  
\]
for each $a \in M_\infty(A)$. The desired result now follows by definition of
the product of degree one in continuous cyclic homology. 
\end{proof}

\subsection{Products of commutators and the cyclic cocycle of A. Connes}
Our purpose is now to obtain a different expression for the evaluation of the
index cocycle to certain elements in the cyclic homology group. We refer to
the beginning of Section \ref{mulcombcalc} for the definition of the index
cocycle which was (of course) originally introduced by A. Connes,
\cite{Connes, Connesgeom}. 

Let $(F,H)$ be an odd $2p$-summable Fredholm module over a commutative, unital
Banach algebra $A$. We will suppose that the representation $\pi : A \to \C
L(H)$ and the linear map $a \mapsto [F,\pi(a)] \in \C L^{2p}(H)$ are
continuous. Let $P = \frac{F+1}{2}$ denote the associated projection. 

For each $n \in \nn$, let $SE_n \subseteq \Sigma_n$ denote the subset of
permutations defined by 
\[
s \in SE_n \Leftrightarrow 
\big( s \in \Sigma_n \, \, \, \T{and} \, \, \, 
s(2i) < s(2i+1)  \big) 
\]

We then have the following combinatorial result, which can be proved by
induction. 

\begin{lemma}\label{techcomm}
For each algebra $B$ over $\cc$ we have the identity
\[
\sum_{\mu\in \Sigma_{2p}} \E{sgn}(\mu)x_{\mu(0)} \olo x_{\mu(2p-1)}
= \sum_{s \in SE_{2p}} \E{sgn}(s)X_{s(0),\,s(1)} \olo X_{s(2p-2),\, s(2p-1)} 
\]
in the tensor product $B^{\otimes 2p}$. Here $X_{s(2i),\,s(2i+1)} \in B
\otimes B$ denotes the commutator  
\[
X_{s(2i),\,s(2i+1)}= 
x_{s(2i)} \otimes x_{s(2i+1)} - x_{s(2i+1)} \otimes x_{s(2i)} 
\]
\end{lemma}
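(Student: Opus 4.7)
The plan is to avoid induction and instead prove the identity by a direct bijective argument together with an expansion of the commutators. Let $T \subseteq \Sigma_{2p}$ denote the subgroup generated by the $p$ commuting transpositions $\tau_i = (2i,\, 2i+1)$, $i=0,\ldots,p-1$. The subgroup $T$ is isomorphic to $(\zz/2\zz)^p$, and its elements are parametrised by vectors $\epsilon=(\epsilon_0,\ldots,\epsilon_{p-1}) \in \{0,1\}^p$ via $\tau_\epsilon = \tau_0^{\epsilon_0}\cdots \tau_{p-1}^{\epsilon_{p-1}}$, with $\T{sgn}(\tau_\epsilon) = (-1)^{\sum \epsilon_i}$.

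First I would establish that the multiplication map $SE_{2p} \times T \to \Sigma_{2p}$, $(s,\tau)\mapsto s\tau$, is a bijection. For existence/uniqueness of the factorisation $\mu = s\tau_\epsilon$, note that once $\mu$ is given, the condition $s \in SE_{2p}$ forces $\epsilon_i = 0$ if $\mu(2i)<\mu(2i+1)$ and $\epsilon_i = 1$ otherwise; then $s = \mu \tau_\epsilon^{-1}$ automatically lies in $SE_{2p}$ by construction. Multiplicativity of the sign gives $\T{sgn}(\mu)=\T{sgn}(s)(-1)^{\sum \epsilon_i}$.

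Next I would expand the right-hand side of the claimed identity. For a fixed $s \in SE_{2p}$, distributing the $p$ binomial factors yields
\[
X_{s(0),s(1)} \olo X_{s(2p-2),s(2p-1)}
= \sum_{\epsilon \in \{0,1\}^p} (-1)^{\sum \epsilon_i}\, x_{s\tau_\epsilon(0)} \olo x_{s\tau_\epsilon(2p-1)},
\]
since each factor $X_{s(2i),s(2i+1)}$ contributes either $x_{s(2i)}\otimes x_{s(2i+1)}$ (when $\epsilon_i = 0$) or $-x_{s(2i+1)}\otimes x_{s(2i)} = -x_{s\tau_i(2i)}\otimes x_{s\tau_i(2i+1)}$ (when $\epsilon_i=1$). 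Multiplying by $\T{sgn}(s)$ and summing over $s\in SE_{2p}$, then reindexing by $\mu = s\tau_\epsilon$ using the bijection from the previous step, gives exactly $\sum_{\mu \in \Sigma_{2p}} \T{sgn}(\mu) x_{\mu(0)} \olo x_{\mu(2p-1)}$, which is the left-hand side.

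There is no real obstacle here: the proof is purely combinatorial and the only thing to check carefully is the accounting of signs under the factorisation $\mu = s\tau_\epsilon$. Note also that the argument works for an arbitrary associative algebra $B$ over $\cc$ (or even for formal tensor products in $B^{\otimes 2p}$), so the commutativity or any further structure of $B$ plays no role.
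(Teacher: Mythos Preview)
Your argument is correct. The coset decomposition $\Sigma_{2p} = SE_{2p}\cdot T$ with $T \cong (\zz/2\zz)^p$ generated by the pairwise transpositions $(2i,2i+1)$ is exactly the right way to see the identity in one stroke, and your sign bookkeeping under $\mu = s\tau_\epsilon$ is accurate.

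The paper does not actually give a proof of this lemma; it merely remarks that the identity \emph{can be proved by induction} (presumably on $p$, peeling off one commutator factor at a time). Your approach is genuinely different and more transparent: instead of an inductive unwinding, you identify $SE_{2p}$ as a set of coset representatives for the right action of $T$ on $\Sigma_{2p}$ and observe that expanding the product of commutators exactly reproduces the sum over each coset. This has the advantage of making the identity structurally obvious (it is nothing but the distributive law together with the coset decomposition), and it generalises immediately without any inductive hypothesis to keep track of. The inductive route would presumably be shorter to write down but hides the reason the identity holds.
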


Now, let $T \in C^{2p-1}(A)$ denote the cochain given by 
\[
T : (x_0,\ldots,x_{2p-1}) 
\mapsto c_p\T{Tr}(Px_0(1-P)x_1P \clc Px_{2p-2}(1-P)x_{2p-1}P)  
\]
Here $c_p = (-1)^{p-1}\frac{(2p-1)!}{(p-1)!}$. We then note that $T \circ
t^2 = T$ and that $T \circ (1+t) = \tau_{2p-1}$, where $\tau_{2p-1} \in
\ccocyc{A}{2p-1}$ is the index cocycle associated with the odd $2p$-summable
Fredholm module $(F,H)$ over $A$. We can thus conclude that $T \circ N =
p\cdot \tau_{2p-1}$.

\begin{prop}\label{prcomm}
For each $x_0,\ldots,x_{2p-1} \in A$ we have the identity 
\[
\begin{split}
& \sum_{\mu \in \Sigma_{2p-1}}\E{sgn}(\mu) 
\tau_{2p-1}(x_0 \otimes x_{\mu(1)} \olo x_{\mu(2p-1)}) \\
& \qquad = (-1)^p c_p \sum_{s \in SE_{2p-1}}\E{sgn}(s)
\E{Tr}([Px_0P,Px_{s(1)}P] \clc  [Px_{s(2p-2)}P, Px_{s(2p-1)}P])
\end{split}
\]
\end{prop}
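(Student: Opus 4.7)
The plan is to reduce both sides to an explicit trace identity and match them using the commutativity of $A$.

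First I would rewrite the antisymmetric sum on the left via the tensor identity
\[
\sum_{\mu \in \Sigma_{2p-1}} \T{sgn}(\mu)\, x_0 \otimes x_{\mu(1)} \olo x_{\mu(2p-1)}
= \sum_{s \in SE_{2p-1}} \T{sgn}(s)\, x_0 \otimes x_{s(1)} \otimes X_{s(2),s(3)} \olo X_{s(2p-2),s(2p-1)},
\]
where $X_{a,b} := x_a \otimes x_b - x_b \otimes x_a$. This is proved exactly as Lemma \ref{techcomm}: partition $\Sigma_{2p-1}$ into cosets of the subgroup generated by the pair-flipping transpositions $(2,3),(4,5),\ldots,(2p-2,2p-1)$ and sum signed tensors over each coset. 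Applying $\tau_{2p-1}$ multilinearly, the left-hand side becomes $\sum_{s \in SE_{2p-1}} \T{sgn}(s)\, \tau_{2p-1}(x_0, x_{s(1)}, X_{s(2),s(3)}, \ldots, X_{s(2p-2),s(2p-1)})$.

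It then suffices to prove the term-by-term identity, for each fixed $s \in SE_{2p-1}$, that
\[
\tau_{2p-1}\big(x_0, x_{s(1)}, X_{s(2),s(3)}, \ldots, X_{s(2p-2),s(2p-1)}\big)
= (-1)^p c_p\, \T{Tr}\big([u_0, u_{s(1)}][u_{s(2)}, u_{s(3)}] \clc [u_{s(2p-2)}, u_{s(2p-1)}]\big),
\]
where $u_i := P x_i P$. Using the decomposition $\tau_{2p-1} = T \circ (1+t)$ together with the concrete trace formula for $T$, the core input is the algebraic identity
\[
P x_a Q x_b P - P x_b Q x_a P = -[P x_a P,\, P x_b P] \qquad (Q = 1 - P),
\]
which follows immediately by expanding $Q = 1 - P$ and using $x_a x_b = x_b x_a$. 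Applied to each of the $p-1$ pair-antisymmetrizations $X_{s(2k),s(2k+1)}$ sitting inside the trace defining $T$, this converts them into ordinary commutators $[u_{s(2k)}, u_{s(2k+1)}]$. The ``first'' commutator $[u_0, u_{s(1)}]$ is not produced by such a pair-flip because the positions $0$ and $1$ are not antisymmetrized in the combinatorial step; it arises instead from combining $T$ with $T \circ t$, where, after trace cyclicity, the two contributions fit together so that the bracketed expression $P x_0 Q x_{s(1)} P - P x_{s(1)} Q x_0 P = -[u_0, u_{s(1)}]$ appears as a factor via the same algebraic identity.

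The main obstacle will be careful bookkeeping of the $P$'s, $Q$'s, and signs while threading the key identity through the trace. I expect an induction on $p$, peeling off one commutator at a time from inside the trace, to give the cleanest organization: the base case $p=1$ reduces (after one application of the key identity) to the direct check $\tau_1(x_0, x_1) = T(x_0, x_1) - T(x_1, x_0) = c_1 \T{Tr}(P x_0 Q x_1 P - P x_1 Q x_0 P) = -c_1 \T{Tr}([u_0, u_1])$, matching the proposition. The overall sign $(-1)^p$ then collects one minus sign per application of the key algebraic identity during the inductive step.
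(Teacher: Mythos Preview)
Your combinatorial decomposition of the antisymmetric sum over $\Sigma_{2p-1}$ into $SE_{2p-1}$-indexed terms is correct, and your base case $p=1$ is fine. The gap is the term-by-term identity you then claim: for a fixed $s\in SE_{2p-1}$,
\[
\tau_{2p-1}\big(x_0, x_{s(1)}, X_{s(2),s(3)}, \ldots, X_{s(2p-2),s(2p-1)}\big)
\;=\;
(-1)^p c_p\,\T{Tr}\big([u_0,u_{s(1)}]\cdots[u_{s(2p-2)},u_{s(2p-1)}]\big)
\]
is \emph{false} in general. The right-hand side is antisymmetric in $(x_0,x_{s(1)})$ (and invariant under cyclic permutation of the $p$ bracket pairs, by trace cyclicity), while the left-hand side has neither of these symmetries. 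Concretely, for $p=2$ on the Hardy module over $A=\cc[z,z^{-1}]$ with $P$ the Hardy projection, taking $x_0=z^{-1}$, $x_1=z$, $x_2=z^{-2}$, $x_3=z^2$ gives $\tau_3(x_0,x_1,X_{2,3})-c_2\,\T{Tr}([u_0,u_1][u_2,u_3])=-2c_2\neq 0$.

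The reason your mechanism for producing $[u_0,u_{s(1)}]$ breaks down is that in $\tau_{2p-1}=T\circ(1+t)$ the operator $t$ is a full cyclic shift of all $2p$ slots, not a transposition of slots $0$ and $1$. Applying $t$ moves the last entry of $X_{s(2p-2),s(2p-1)}$ into slot $0$ and pushes $x_0,x_{s(1)}$ into slots $1,2$, so the pair-commutator pattern is destroyed and the two contributions from $T$ and $T\circ t$ do not recombine into $-[u_0,u_{s(1)}]$ times the remaining commutators. The induction you sketch inherits the same problem: you cannot peel off the outermost bracket without already having antisymmetrized slot $0$ against the others. The paper avoids this by first using $T\circ N = p\,\tau_{2p-1}$ to promote the $\Sigma_{2p-1}$-sum to a full $\Sigma_{2p}$-antisymmetrization; Lemma~\ref{techcomm} then produces \emph{all} $p$ tensor commutators at once (including the $(0,1)$ pair), each of which becomes $-[u_a,u_b]$ inside $T$ via the commutativity identity. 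Only afterwards is the sum over $SE_{2p}$ collapsed to $SE_{2p-1}$ by trace cyclicity of the resulting product of $p$ commutators. If you want to keep your overall outline, replace the term-by-term claim with this passage through $\Sigma_{2p}$.
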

\begin{proof}
Using Lemma \ref{techcomm} and the considerations preceeding the statement of
the Theorem we get that
\[
\begin{split}
& \sum_{\mu \in \Sigma_{2p-1}}\T{sgn}(\mu)
\tau_{2p-1}(x_0 \otimes x_{\mu(1)} \olo x_{\mu(2p-1)}) \\
& \q = \frac{1}{p} \sum_{\mu \in \Sigma_{2p}}\T{sgn}(\mu)
T(x_{\mu(0)}\otimes x_{\mu(1)} \olo x_{\mu(2p-1)}) \\
& \q = \frac{1}{p} \sum_{s \in SE_{2p}}\T{sgn}(s)
T(X_{s(0),\,s(1)} \olo X_{s(2p-2),\,s(2p-1)})
\end{split}
\]
However, by commutativity of $A$ we get the relation 
\[
\arr{ccc}{
[PxP,PyP] = -Px(1-P)yP + Py(1-P)xP & \q & \forall x,y\in A
}
\]
It follows that 
\[
\begin{split}
& \sum_{\mu \in \Sigma_{2p-1}}\T{sgn}(\mu)
\tau_{2p-1}(x_0 \otimes x_{\mu(1)} \olo x_{\mu(2p-1)}) \\
& \q = \frac{ (-1)^p }{p} c_p \sum_{s \in SE_{2p}}\T{sgn}(s) \\
& \qq \T{Tr}([Px_{s(0)}P,Px_{s(1)}P]\clc [Px_{s(2p-2)}P,Px_{s(2p-1)}P]) \\
& \q = (-1)^p c_p \sum_{s \in SE_{2p-1}}\T{sgn}(s) \\
& \qq \T{Tr}([Px_0P,Px_{s(1)}P][Px_{s(2)}P,Px_{s(3)}P] \clc
[Px_{s(2p-2)}P,Px_{s(2p-1)}P]) 
\end{split}
\]
In the last equation we have used that the set of permutations $SE_{2p-1}$ can
be viewed as the quotient of $SE_{2p}$ by an action of the cyclic group on $p$
elements. We have thus obtained the desired result. 
\end{proof}

\subsection{An evaluation of the multiplicative character on higher Loday
  symbols}
We are now ready to prove our concrete formula for the application of the
multiplicative character to higher Loday products. This will accomplish the
main purpose of the paper. 

Let $(F,H)$ be an odd $2p$-summable Fredholm module over a commutative, unital
Banach algebra $A$. We will suppose that the representation $\pi : A \to \C
L(H)$ and the linear map $a \mapsto [F,\pi(a)] \in \C L^{2p}(H)$ are
continuous. Let $P = \frac{F+1}{2}$ denote the associated projection. We refer
to the beginning of Section \ref{mulcombcalc} for a brief reminder on the
construction of the multiplicative character. 

\begin{prop}
Let $a_0,\ldots,a_{2p-1} \in M_\infty(A)$. The multiplicative character of the
Loday product $[e^{a_0}] * \ldots * [e^{a_{2p-1}}] \in K_{2p}(A)$ is then
given by 
\[
\begin{split}
& M_F([e^{a_0}] * \ldots * [e^{a_{2p-1}}]) \\
& \q = (-1)^p c_p \sum_{s \in SE_{2p-1}}\E{sgn}(s) \\
& \qq \E{Tr}([P\E{TR}(a_0)P,P\E{TR}(a_{s(1)})P] 
\clc  [P\E{TR}(a_{s(2p-2)})P, P\E{TR}(a_{s(2p-1)})P]) \\ 
& \qqq \in \cc/(2\pi i)^p \zz
\end{split}
\]
\end{prop}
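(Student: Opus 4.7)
The plan is to assemble the desired formula from the multiplicative machinery developed in the preceding sections, reducing the computation at level $2p$ to the level-one situation where the relative Chern character admits a direct description as a logarithm.

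The first step is to lift the Loday product to relative $K$-theory. For each $i$ the smooth path $\gamma_{a_i}(t) = e^{-t a_i}$ defines an element $[\gamma_{a_i}] \in K_1^{\T{rel}}(A)$ satisfying $\theta[\gamma_{a_i}] = [e^{a_i}]$ in $K_1(A)$. Invoking commutativity of $A$ together with Theorem \ref{thetprod}, the $2p$-fold interior product
\[
[\gamma] := [\gamma_{a_0}] *^{\T{rel}} \ldots *^{\T{rel}} [\gamma_{a_{2p-1}}] \in K_{2p}^{\T{rel}}(A)
\]
satisfies $\theta[\gamma] = [e^{a_0}] * \ldots * [e^{a_{2p-1}}]$. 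By the very definition of the multiplicative character as the descent to $\T{Im}(\theta)$ of the additive character $\tau_{2p-1} \circ \T{ch}^{\T{rel}}$, it follows that
\[
M_F([e^{a_0}] * \ldots * [e^{a_{2p-1}}]) = \tau_{2p-1}\bigl(\T{ch}^{\T{rel}}[\gamma]\bigr) \in \cc/(2\pi i)^p \zz.
\]

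Next I would compute $\T{ch}^{\T{rel}}[\gamma]$ explicitly. Theorem \ref{cheprod} supplies precisely the needed formula
\[
\T{ch}^{\T{rel}}[\gamma] = \sum_{\mu \in \Sigma_{2p-1}} \T{sgn}(\mu)\, \T{TR}(a_0) \otimes \T{TR}(a_{\mu(1)}) \otimes \ldots \otimes \T{TR}(a_{\mu(2p-1)}) \in \hcyc{A}{2p-1}.
\]
Applying the index cocycle $\tau_{2p-1}$ to this cyclic class, Theorem \ref{prcomm} specialised to $x_i = \T{TR}(a_i)$ converts the antisymmetrized sum into exactly the combinatorial expression indexed over $SE_{2p-1}$ involving traces of products of commutators $[P\,\T{TR}(a_i)P, P\,\T{TR}(a_j)P]$. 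Concatenating the three steps and reducing modulo $(2\pi i)^p \zz$ produces the stated identity.

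There is essentially no structural obstacle at this stage; the analytic and simplicial heart of the argument has already been accomplished in the sections on multiplicativity of $\theta$, multiplicativity of $\T{ch}^{\T{rel}}$, and the combinatorics of commutators versus the Connes cocycle. The only point requiring a brief verification is that Theorem \ref{prcomm}, formulated over the commutative algebra $A$, applies after stabilisation by $M_\infty(A)$; but this is automatic since $\T{TR}$ has already transported the cyclic class from $M_\infty(A)$ down to $A$ before the cocycle acts, so the commutator identity may be invoked directly on $A$. The lift $[\gamma]$ exists precisely because the elements $[e^{a_i}]$ lie in the connected component of the identity, which is the reason the formula is restricted to higher Loday symbols of exponentials.
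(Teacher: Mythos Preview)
Your proposal is correct and follows essentially the same route as the paper: lift each $[e^{a_i}]$ via $\gamma_{a_i}(t)=e^{-ta_i}$, use Theorem~\ref{thetprod} to identify $\theta$ of the relative product with the Loday product, then apply Theorem~\ref{cheprod} and Theorem~\ref{prcomm} in succession. Your closing remarks on why Theorem~\ref{prcomm} applies directly over $A$ after $\T{TR}$ has acted, and on the role of contractibility, are accurate and slightly more explicit than the paper's version.
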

\begin{proof}
For each $i \in \{0,\ldots,2p-1\}$ we let $\gamma_{a_i} \in R(A)_1$ denote the
smooth path given by $\gamma_{a_i} : t \mapsto e^{-ta_i}$. We then have 
\[
\theta([\gamma_{a_i}]) = [\gamma_{a_i}(1)^{-1}] = [e^{a_i}]
\]
By Theorem \ref{thetprod} the map $\theta : \oplus_{n=1}^\infty
K_n^{\T{rel}}(A) \to \oplus_{n=1}^\infty K_n(A)$ is a homomorphism of graded
rings, so we get that 
\[
\theta([\gamma_{a_0}]*^{\T{rel}} \ldots *^{\T{rel}} [\gamma_{a_{2p-1}}]) 
= [e^{a_0}] * \ldots * [e^{a_{2p-1}}]
\] 
By definition of the multiplicative character we then have 
\[
M_F ( [e^{a_0}] * \ldots * [e^{a_{2p-1}}] ) 
= (\tau_{2p-1} \circ \T{ch}^{\T{rel}})
([\gamma_{a_0}]*^{\T{rel}} \ldots *^{\T{rel}} [\gamma_{a_{2p-1}}]) \in \cc/(2\pi
i)^p \zz
\]
But it follows from Theorem \ref{cheprod} and Theorem \ref{prcomm} that the
right hand side is given by 
\[
\begin{split}
& (\tau_{2p-1} \circ \T{ch}^{\T{rel}})
([\gamma_{a_0}]*^{\T{rel}} \ldots *^{\T{rel}} [\gamma_{a_{2p-1}}]) \\
& \q = \sum_{\mu \in \Sigma_{2p-1}}\T{sgn}(\mu) 
\tau_{2p-1}(\T{TR}(a_0) \otimes \T{TR}(a_{\mu(1)}) \olo \T{TR}(a_{\mu(2p-1)}))
\\ 
& \q = (-1)^p c_p \sum_{s \in SE_{2p-1}}\T{sgn}(s) \\
& \qq \T{Tr}([P\T{TR}(a_0)P,P\T{TR}(a_{s(1)})P] 
\clc  [P\T{TR}(a_{s(2p-2)})P, P\T{TR}(a_{s(2p-1)})P]) 
\end{split}
\]
proving the desired result. 
\end{proof}

\begin{corollary}
For any commutative unital Banach algebra the multiplicative character is
calculizable on the subgroup of $K_{2p}(A)$ generated by Loday products of
elements in the connected component of the identity, $GL_0(A)$.
\end{corollary}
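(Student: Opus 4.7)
The plan is to reduce an arbitrary Loday product of elements of $GL_0(A)$ to a finite linear combination of products of exponentials, and then apply the main theorem termwise. The key algebraic ingredients are the additivity of $[\cdot]$ in $K_1$, the multilinearity of the Loday product, and the standard fact that every element of $GL_0(A)$ in a unital Banach algebra is a finite product of exponentials $e^{a}$ with $a \in M_\infty(A)$.

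First I would unpack what it means for the character to be \emph{calculizable} on the relevant subgroup. A generic element of this subgroup is a finite $\zz$-linear combination of terms $[g_0]*\ldots*[g_{2p-1}]$ with $g_i \in GL_0(A)$. Since $K_1(A)$ is the abelianization of $GL(A)$, the relation $[uv]=[u]+[v]$ holds in $K_1$. Combined with the bilinearity of the Loday product, this reduces the problem to evaluating $M_F$ on products $[g_0]*\ldots*[g_{2p-1}]$ in which each $g_i$ lies in $GL_0(A)$.

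Next I would invoke the fact that in a unital Banach algebra the connected component $GL_0(A)$ of the identity is generated, as a group, by exponentials: any $g \in GL_0(A)$ may be written as $g = e^{a_1}\cdots e^{a_k}$ for some $a_1,\ldots,a_k \in M_\infty(A)$. Applying $K_1$-additivity again, $[g_i] = [e^{a_{i,1}}] + \ldots + [e^{a_{i,k_i}}]$, and then multilinearity of $*$ expands $[g_0]*\ldots*[g_{2p-1}]$ into a finite $\zz$-linear combination of products of the form $[e^{b_0}]*\ldots*[e^{b_{2p-1}}]$ with $b_j \in M_\infty(A)$. The explicit formula just established in the preceding proposition then evaluates $M_F$ on each summand as a concrete trace of a product of commutators of the operators $P\T{TR}(b_j)P$, modulo $(2\pi i)^p \zz$. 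Since only finitely many terms arise, the value of $M_F$ on the original element is a finite $\zz$-linear combination of such expressions in $\cc/(2\pi i)^p\zz$, hence explicitly computable.

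The only step that requires any genuine input beyond bookkeeping is the statement that $GL_0(A)$ is exponentially generated. This is classical for unital Banach algebras: $GL_0(A)$ is open, the exponential map is a local diffeomorphism near $0$, so a neighborhood of $1$ in $GL_0(A)$ lies in the image of $\exp$, and a connected topological group is generated by any neighborhood of the identity. Passing to $GL_\infty(A)=\varinjlim GL_n(A)$ poses no additional difficulty since the same argument applies at each finite stage. Beyond this, the proof is purely formal: expand, apply multilinearity, and quote the preceding theorem.
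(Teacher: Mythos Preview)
Your proposal is correct and follows essentially the same approach as the paper: reduce to products of exponentials using that $GL_0(A)$ is generated by exponentials, then invoke multilinearity of the Loday product and additivity of $M_F$ to apply the preceding formula termwise. The paper's proof is a two-line version of exactly this argument, so your write-up simply spells out the details (including the standard justification of exponential generation) that the paper leaves implicit.
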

\begin{proof}
Since each element in $GL_0(A)$ can be obtained as a product of exponentials,
the result follows by noting that the Loday product is multilinear and that
the multiplicative character is a homomorphism of abelian groups. 
\end{proof}

\end{document}